\numberwithin{equation}{section}
\title[Navier and Stokes meet Poincar\' e and Dulac]{Navier and Stokes meet Poincar\' e and Dulac}
\author[C. Foias]{Ciprian Foias}
\address{Department of Mathematics,
Mailstop 3368,
Texas A\&M University,
College Station, TX 77843-3368, U. S. A.} 
\author[L.  Hoang]{Luan Hoang}
\address{Department of Mathematics and Statistics, Texas Tech University, Box 41042, Lubbock, TX 79409-1042, U. S. A.} 
\email{luan.hoang@ttu.edu}
\author[J.-C. Saut]{Jean-Claude Saut}
\address{Laboratoire de Math\' ematiques, UMR 8628,\\
Universit\' e Paris-Sud et CNRS, 91405 Orsay, France}
\email{jean-claude.saut@u-psud.fr}
\date\today
\begin{document}
	\newcommand{\R}{\mathbb{R}}
	\newcommand{\T}{\mathbb{T}}
	\newcommand{\Z}{\mathbb{Z}}
	\newcommand{\C}{\mathcal{C}}
	\newcommand{\U}{\mathbb{U}}
	\newcommand{\N}{\mathbb{N}}
	\newcommand{\W}{\mathcal{W}}
	\newcommand{\D}{\mathcal{D}}
	\newcommand{\s}{\text{sinh}}
	\newcommand{\norm}[1]{ \left \lVert #1  \right \rVert}
	\newcommand{\norma}[1]{ \left \lVert  #1 \right \rVert}
	\newcommand{\bra}[1]{\left \langle #1 \right \rangle}

\newcommand{\mysnote}[1]{\fbox{#1}}
\newcommand{\mybox}[1]{ \fbox{\parbox{.9\linewidth}{#1}} }
\newcommand{\highlight}[1]{{\color{black} #1}}
\newcommand{\hilite}[1]{{\color{blue} #1}}
\newcommand{\alert}[1]{{\color{red} #1}}

\def\mP{\mathcal P}
\def\mQ{\mathcal Q}
\def\mB{\mathcal B}
\def\mD{\mathcal D}
\def\mA{\mathcal A}
\def\eqdef{\stackrel{\rm def}{=}}
\newcommand{\sinorm}[2]{\big [\, #1\, \big ]^{#2}}
\newcommand{\dinorm}[2]{\left [ \left [\,  #1 \, \right]\right]_{#2}}
\def\vecvor{\omega}

\newcommand{\bvec}[1]{\mathbf{#1}}
\def\vecx{\bvec x}
\def\vecy{\bvec y}
\def\veca{\bvec a}
\def\vecb{\bvec b}
\def\vecc{\bvec c}
\def\vecu{\bvec u}
\def\vecv{\bvec v}
\def\vecw{\bvec w}
\def\veck{\bvec k}
\def\vecj{\bvec j}
\def\vecn{\bvec n}
\def\vece{\bvec e}
\def\vecm{\bvec m}

\def\heli{{\mathcal H}}
\def\helirate{{\mathcal J}}
\def\energy{{\mathcal E}}
\def\dissrate{{\mathcal F}}

\def\curlop{\mathfrak C}

\newcommand{\inprod}[2]{\langle{#1},{#2}\rangle}
\newcommand{\doubleinprod}[2]{\langle\!\langle{#1},{#2}\rangle\!\rangle}

\newcommand{\beq}{\begin{equation}}
\newcommand{\eeq}{\end{equation}}

\newcommand{\beqs}{\begin{equation*}}
\newcommand{\eeqs}{\end{equation*}}

\def\ddt{\frac{d}{dt}}

\newcommand{\bds}{\begin{displaystyle}}
\newcommand{\eds}{\end{displaystyle}}

\renewcommand{\Re}{\mathbf{Re}}
\renewcommand{\Im}{\mathbf{Im}}

\newcommand{\DA}{\mathcal{D}(A)}
\newcommand{\solnset}{\mathcal{R}}
\def\SAstar{\mathcal S_A^\star}

\def\myclearpage{}
\def\myclearpage{\clearpage}
\def\vol{L^3}


\def\NSE{Navier--Stokes equations}
\newcommand{\setdef}[1]{\left\{\,#1\,\right\}}

\newcommand{\Ghighlight}[1]{{\color{darkgreen} #1}}

\newtheorem{theorem}{Theorem}[section]
\newtheorem{lemma}{Lemma}[section]
\newtheorem{proposition}{Proposition}[section]
\newtheorem{corollary}[theorem]{Corollary}
\theoremstyle{remark}
\newtheorem{remark}{Remark}[section]
\theoremstyle{definition}
\newtheorem{definition}[theorem]{Definition}
\newtheorem{example}[theorem]{Example}

\def\kstar{{\veck^0}}
\def\manifold{\mathcal{M}}
\def\Hodd{H_{odd}}
\def\orthplane{\veca^\perp}
\def\directmanifold{\manifold_{\veca^\perp }}

\begin{abstract}
This paper surveys various precise (long-time) asymptotic results for the solutions of the Navier-Stokes equations with potential forces in bounded domains. It turns out that the asymptotic expansion leads surprisingly to a Poincar\' e-Dulac normal form of the Navier-Stokes equations.
We will also discuss some related results and a few open issues.
\end{abstract}

       \dedicatory{Dedicated to Claude-Michel Brauner on the occasion of his 70th birthday.}
       
\maketitle
\tableofcontents

\section{Introduction and some historical tidbits}
\label{secintro}

 Claude-Louis  Navier (1785--1836) and George B. Stokes  (1819--1903), of course, never met Henri Poincar\' e (1854--1912) and Henri Dulac (1870--1955) as we will recall in the small historical section below. (For more details, and more generally for a fascinating account  of the history of fluid dynamics, see \cite{Dar}). However their mathematical theory of dynamical systems and physics theory of fluid mechanics have finally met more than a hundred years after their initial contributions.

We will not comment on Stokes and Poincar\' e who are well-known scientists but make a few (may be not so well known) remarks on Dulac, and mainly on Navier.

 Claude Louis Marie Henri Navier was an "X-Ponts" engineer in the jargon of {\it Grandes Ecoles}, first trained at the Ecole Polytechnique, then at the Ecole des Ponts et Chauss\' ees, one of the {\it Ecoles d'applications} such as the {\it Ecole des Mines} (Augustin Louis Cauchy was an "X-Ponts", Henri  Poincar\' e was an "X-Mines"). He was in the main stream of French theoretical continuum mechanics of this time.

 A major figure of French Mechanics of this time, Adh\' emar Barr\' e de Saint-Venant (1797--1886), also an X-Ponts, was a former student and  successor of Navier. Among many other things, he derived the so-called {\it Saint-Venant} system (or {\it shallow water system}).
He was the advisor and protector of Joseph Boussinesq (1842--1929) who made fundamental contributions in Fluid Mechanics, in particular on the theory of water waves.

As noticed by Olivier Darrigol in his book \cite{Dar}, 

\begin{quote}
"Navier and other Polytechnicians' efforts to reconcile theoretical and applied mechanics had no clear effect on French engineering practice. Industry prospered much faster in Britain, despite the lesser mathematical training of its engineers. Some of Navier's colleagues saw this and ridiculed the use of transcendental mathematics in concrete problems of construction. In the mid-1820s, a spectacular incident apparently justified their disdain. Navier's chef-d'oeuvre, a magnificent suspended bridge at the Invalides, had to be dismantled in the final stage of its construction".
\end{quote}

 Actually Navier was probably most famous in his time for  the "disaster" of the {\it pont des Invalides}, the first suspended bridge over the Seine river. In fact Navier had mis-estimated  the direction of the  force exerted by the chain on the stone. This could have been corrected easily but the hostile municipal authorities decided the dismantlement of Navier bridge.  Honor\' e de Balzac is alluding to this incident (rather ironically)  in his novel  {\it Le cur\' e de village}:

\begin{quote}
"La France enti\`ere a vu le d\' esastre, au coeur de Paris, du premier pont suspendu que voulut \' elever un ing\' enieur, membre de l'Acad\' emie des Sciences,  triste chute qui fut caus\' ee par des fautes que ni le constructeur du canal de Briare, sous Henri IV, ni le moine qui a b\^{a}ti le Pont-Royal, n'eussent faites, et que l'Administration consola en l'appelant au Conseil G\' en\' eral (des Ponts et Chauss\' ees). Les Ecoles Sp\' eciales seraient-elles donc des fabriques d'incapacit\' es? Ce sujet exige de longues observations".
\end{quote}

\begin{quote}
\textit{(Translation)} "All  France knew of the disaster which happened in the heart of Paris to the first suspended bridge built by an engineer, a member of the Academy of Sciences, a melancholy collapse cause by blunders such as none of the ancient engineers, the man who cut the canal at Briare in Henry's IV time, or the monk who built the Pont Royal-would have made; but our administration consoled its engineer  for his blunder by making him a member of the  Council general. (of the {\it Ponts et Chauss\' ees}). Are our {\it Ecoles Sp\' eciales} producers of incapacities? This topic deserves lengthy observations".
\end{quote}

According to Saint-Venant however, the dismantlement of the bridge was more than  a local administrative deficiency: 

\begin{quote}
"At that time there already was  a surge of the spirit of denigration, not only of the  "savants" but also of science, disparaged under the name of {\it theory} opposed to {\it practice}; one henceforth exalted practice in its most material aspects, and prevented that higher mathematics could not help, as if, when it comes to results, it made sense to distinguish between the more or less elementary or transcendent procedures that led to them in an equally logical manner. Some "savants" supported or echoed these unfounded criticisms".
\end{quote}


As an aside, we note that there are some estimates of velocity fields in infinite dimensional spaces which have useful consequences, e.g. \cite{FMoTi}.

Navier was nevertheless a great scientist and, coming back to our subject,  he derived what are known as the Navier-Stokes equations in a 1823 M\' emoire. Further (different) derivations are due to Poisson (1831), Saint-Venant (1834) and Stokes (1843).

The last member of the quartet in title is the least famous of them. Henri Dulac (1870-1955), a former student of Ecole Polytechnique, was a professor at the University of Lyon and a corresponding member of the French Academy of Sciences. He was a specialist of the geometric theory of ordinary differential equations and developed in particular, after Poincar\' e, the theory of {\it normal forms}.

\subsection{The Navier-Stokes equations for viscous, incompressible fluid flows}
We now recall briefly  the derivation of  Navier-Stokes equations (NSE), based on conservation laws and the choice of a constitutive equation.
For complete background on NSE see e.g. \cite{LadyNSEbook,TemamNSEbook,TemamSIAMbook, CFbook,FMRTbook}.

We study fluid flows in Euclidean space of dimension $n=2,3$.
Let $\rho$ denote the density of the fluid, and $u$ its velocity.

\medskip
\indent $\bullet$ Conservation of mass:
$$\partial _t \rho +\text{div}(\rho u)=0.$$

We will consider only  the case when the density $\rho$ is constant, so that the conservation of mass reduces to 
$$\text{div}\;u=0.$$
We refer to this as the incompressibility condition.

\medskip
\indent $\bullet$ Conservation of momentum (Newton's law) for a general fluid:
$$\rho( \partial_t u+(u\cdot\nabla)u)=\text{div}(-\tilde p{\bf I}+{\bf T})+ \tilde f,$$ 
where $\tilde p$ is the (scalar) pressure, ${\bf T}$ is the {\it extra-stress} tensor, and $\tilde f$ represents body forces.
Here, we use the standard notation
$u\cdot\nabla=\sum_iu_i\partial_{x_i}.$

When ${\bf T}\equiv 0, $ one obtains the Euler equations (1755).

\medskip
\indent $\bullet$  Constitutive law: For a {\it Newtonian viscous fluid}, ${\bf T}$ at the present time $t$ is just proportional to the rate of deformation tensor 
${\bf D}(u)=(\nabla u+(\nabla u)^T)/2$ at time $t,$ that is 
$${\bf T}=\mu {\bf D}(u),$$
where $\mu$ is the dynamic viscosity coefficient. (For a general {\it non-Newtonian fluid}, ${\bf T}$ can be a complicated function of the past history of the deformations).

\medskip
Finally, one obtains the Navier-Stokes equations (NSE)
\begin{equation}\label{NSt}
    \left\lbrace
    \begin{array}{l}
 \partial_t u+(u\cdot\nabla)u-\nu \Delta u+\nabla p= f, \vspace{1mm}\\
      \text{div}\; u=0,
    \end{array}\right.
\end{equation}
where $\nu=\mu/\rho$ is the kinematic viscosity, $p=\tilde p/\rho$, and $f=\tilde f/\rho$. For simplicity, we will just call $\nu$ viscosity, $p$ pressure, and $f$ body force.

The system \eqref{NSt} consists of $(n+1)$ equations for $(n+1)$ unknowns, namely,  $u\in \R^n$ and $p\in \R$.
It will be completed with initial and boundary conditions in our considerations.

\subsection{Functional setting}
We consider the following two cases of fluid flows.

The first scenario is when the fluid is confined a smooth, bounded domain $\Omega$ of $\R^n$, and the velocity satisfies the no-slip boundary condition, i.e.,  $u=0$ on $\partial \Omega$.
We set in this case
$$\mathcal V=\lbrace v\in C^{\infty}_0(\Omega)^n:\;\text{div}\; v=0\rbrace.$$

The second scenario is when $(u,p)$ are defined in the whole space $\R^n$, but are $L$-periodic, for some $L>0$, in all there Cartesian coordinates.
Then $u$ and $p$ are considered as functions on the domain 
\beq\label{peridom} \Omega=\R^n/[0,L]^n.
\eeq 

We usually refer to this $\Omega$ as a periodic domain, and say $u$ and $p$ satisfy the periodicity boundary condition on $[0,L]^n$.
By a remarkable Galilean transformation, we assume, without 
loss of generality, that $u$  has zero averages over $\Omega$, i.e.,
\beqs
\int_\Omega u(x,t)dx=0.
\eeqs

We then define the space
$$\mathcal V=\Big\lbrace  \text{$\R^n$-valued $L$-periodic trigonometric polynomial $v$}:
\;\text{div}\;v=0,\;\int_{\Omega}v\, dx=0\Big\rbrace.$$

In both cases, we will use the classical spaces:
$$H=\text{closure of}\; \mathcal V \;\text{in}\; L^2(\Omega)^n,$$
$$V=\text{closure of}\; \mathcal V \;\text{in}\; H^1(\Omega)^n,$$
with norms 
$$\|u\|_H=|u|=\Big(\int_{\Omega}|u(x)|^2dx\Big)^{1/2},\quad \|u\|_V=\|u\|=\Big(\int_{\Omega}|\nabla v(x)|^2dx\Big)^{1/2}.$$

Note that notation $|\cdot|$ is used to denote the $H$-norm and the standard Euclidean norm on $\mathbb C^n$. However, its meaning will be clear in the context.

We denote the standard inner products of $L^2(\Omega)^k$, for $k\in\N$, by the same notation $\inprod{\cdot}{\cdot}$.

The norm in the Sobolev space $H^m(\Omega)$ is denoted by $\|\cdot\|_m.$ We also denote 
$$\mathcal E^m(\Omega)=H\cap H^m(\Omega)\text{ for $m\ge 0$, and }
\mathcal E^\infty(\Omega)=\bigcap_{m=0}^\infty \mathcal E^m(\Omega).$$

One has the Helmholtz-Leray decomposition for the case of no-slip boundary condition,
\begin{equation}\label{decomp}
L^2(\Omega)^n=H\oplus \{\nabla \varphi: \varphi\in H^1(\Omega) \}.
\end{equation}
and for the case of periodicity boundary condition,
\begin{equation}\label{peridecomp}
\Big \{ v\in L^2(\Omega)^n:\int_{\Omega}v\, dx=0\Big \}=H\oplus \{\nabla \varphi: \varphi\in H^1(\Omega) \}.
\end{equation}

We define $P$ to be the (Leray) orthogonal projection in $L^2(\Omega)^n$ onto $H$.

We assume at the moment that $(u,p)$ are classical solutions of NSE.
Thanks to \eqref{decomp} and \eqref{peridecomp}, we have $P(\nabla p)=0$. With this observation, we can reduce the unknowns of NSE from $(u,p)$ to $u$ only, by projecting the NSE to the space $H$. Having that in mind, we define the Stokes operator $A$ by 
$$Au=-P\Delta u$$
(with the ad hoc boundary conditions), and also define the bilinear form  
$$B(v,w)=P\lbrack (v\cdot \nabla)w\rbrack.$$

Assume  $f$ is a potential, i.e.,  $f=-\nabla \psi$, then, thanks to \eqref{decomp} and \eqref{peridecomp} again, $Pf=0$. 

Hence, applying the Leray projection $P$ to the NSE, and using the decomposition \eqref{decomp} or \eqref{peridecomp}, we rewrite the NSE \eqref{NSt} in the \textit{functional form} as:
\begin{equation}\label{NS}
\left\lbrace\begin{aligned}
   &\frac{du}{dt}+\nu Au+B(u,u)=0,\\
   &u(0)=u_0,
 \end{aligned}\right.
\end{equation}
where $u_0$ is a given initial data in $H$.

This functional form \eqref{NS} will be the focus of our study in this paper. 

\subsection{Basic facts}\label{basic}
The Stokes operator $A$ is an unbounded, self-adjoint operator in $H$ with domain 
$$D(A)=V\cap H^2(\Omega)^n.$$ 

Its spectrum $\sigma (A)$ consists of an unbounded sequence of real eigenvalues
\beq\label{Stkspec}
0<\Lambda_1<\Lambda_2<\ldots<\Lambda_k<\ldots,
\eeq
with corresponding multiplicities $m_1,m_2,\ldots,m_k,\ldots$ (See e.g. \cite{Catta}.)

The orthogonal projection in $H$ on the eigenspace of $A$ corresponding to $\Lambda_j$ will be denoted by $R_j$. 
 
\medskip
We  denote 
\beqs \mathscr S(A)=\{0<\mu_1=\Lambda_1<\mu_2<\mu_3<\ldots\},
\eeqs 
the additive semi-group generated by the $\Lambda_k$'s.

\medskip
In the periodic case, 
\beqs
\sigma(A)=\{ 4\pi^2|\veck|^2/L^2: \; \veck\in\Z^n,\; \veck\ne 0\},
\eeqs
hence,
\beq\label{periLamb}
\Lambda_1=4\pi^2/L^2\quad\text{and} \quad \sigma(A)\subset\{n\Lambda_1:\; n\in\N\}.
\eeq

By scaling the spatial and time variables, we can further assume, without loss of generality, 
\beq\label{scalength} \nu =1 \quad \text{and}\quad L=2\pi,
\eeq 
thus, 
\beq\label{scalesig} 
\Lambda_1=1,\quad \sigma(A)\subset \N,\quad \text{and}\quad \mathscr S(A)=\N.
\eeq

\bigskip
It has been known  since Leray's fundamental  papers (\cite{Ler1, Ler2, Ler3}) that 
\begin{enumerate}[label={\rm (\roman*)}]
\item For every initial data $u_0\in H, $ problem \eqref{NS} has a (Leray-Hopf) weak solution $u$ (see e.g. \cite{LionsBook,CFbook, LadyNSEbook,TemamNSEbook,TemamSIAMbook, FMRTbook}), that is,
\beqs
u\in C([0,\infty);H_{\rm w})\cap L^2_{\rm loc}([0,\infty);V),\quad u'\in L^{4/3}_{\rm loc}([0,\infty);V'),
\eeqs
satisfying \eqref{NS} in the dual space $V'$ of $V$,  
and the energy inequality
\beqs
\frac12|u(t)|^2+\int_{t_0}^t \|u(\tau)\|^2d\tau\le \frac12|u(t_0)|^2
\eeqs
holds for $t_0=0$ and almost all $t_0\in(0,\infty)$, and all $t\ge t_0$. 

Above, $H_{{\rm w}}$ denotes the space $H$ endowed with the weak topology. 

If $I$ is a closed interval in $\lbrack 0,\infty)$, then a weak solution $u$ is {\it regular} on $I$ if $u\in C(I;V).$

\item This weak solution becomes regular on $[T_0,\infty)$, for some $T_0=T_0(\nu, u_0)\ge 0$.
\item It is not known whether a (Leray-Hopf) weak solution is unique. 
\item If $u$ is regular on $I=\lbrack t_0,t_1 \rbrack$, then $u$ is uniquely determined on $I$ by $u(t_0).$
\item It is known that any (Leray-Hopf) weak solution $u$ satisfies
$$\frac{1}{2}\frac{d}{dt}|u(t)|^2+\nu\|u(t)\|^2\leq 0 \text{ in the distribution sense on }(0,\infty).$$ 
\item Any regular solution $u$ on $[0,\infty)$ satisfies the equation
\begin{equation}\label{ee}
\frac{1}{2}\frac{d}{dt}|u(t)|^2+\nu\|u(t)\|^2=0\text{ on }(0,\infty). 
\end{equation}
\end{enumerate}

\medskip
Because of property (ii) above, and that our goal is to study long-time behavior of solutions to NSE, we will, without loss of generality, mainly consider 
regular solutions on $[0,\infty)$. Let $\mathcal R$ denote the set of initial data in $V$ leading to global regular solutions.
Then $\mathcal R$ is an open subset of $V$, and, particularly,  $\mathcal R=V$ when $d=2$.

Obviously, $u=0$ is a trivial regular solution on $[0,\infty)$. Hence, $\mathcal R$ contains a neighborhood of $0$.
However, proving or disproving that $\mathcal R=V$ when $d=3$ is still an outstanding open problem.

Here afterward, we will call a regular solution $u$ on $[0,\infty)$, that is when $u(0)\in \mathcal R$, simply a {\it regular solution}.

\medskip
For a regular solution $u$, one has from \eqref{ee} and the Poincar\'e inequality, i.e., $$\Lambda_1|u|^2\leq\|u\|^2,$$  that
$$|u(t)|^2\leq |u_0|^2e^{-2\nu \Lambda_1 t},\quad\forall t\ge 0.$$

That is $|u(t)|^2$ must decay exponentially as $t\to\infty$ at the rate at least  $2\nu\Lambda_1$.

\subsection{Aim and outline of the paper}
A natural question (raised by P. Lax to C. Foias) is then to ask whether or not this decay rate is optimal.
In an early work, Dyer and Edmunds \cite{DE1968} prove that any non-trivial, regular solution $u$ has $|u(t)|^2$ also bounded below by an exponential function of $t$.
However, this answer is far from being definitive in describing the exact asymptotic behavior of a non-trivial, regular solution.  
In the following sections, we present the mathematical developments of the problem which lead to the asymptotic expansion and normal form theory (for NSE).  

\medskip
The paper is organized as follows. 
In section \ref{secDirichlet}, the Dirichlet quotient is proved to converge, as $t\to\infty$ to an eigenvalue of the Stokes operator. The asymptotic expansion of the regular solutions are studied. The set $\mathcal R$ is decomposed into nonlinear manifolds $M_k$'s, which characterize the rate of the decay for the solutions.
In section \ref{secexpansion}, each regular solution is proved to admit an asymptotic expansion in terms of exponential decays and polynomials in time. The application to analysis of the helicity is also presented.
In section \ref{secODE}, we review the classical Poincar\'e-Dulac theory of normal forms for ODEs.
In section \ref{secnormal}, it is shown that the asymptotic expansion reduces to a normal form, which, originally, is in a Fr\'echet space with very weak topology. It is then studied in suitable Banach spaces. In such a weighted normed space, the normalization map is continuous and the normal form for NSE is a well-posed infinite dimensional ordinary differential equation (ODE) system.
In section \ref{secPD}, the inverse of the normalization map is written as a formal power series in $E^\infty$, an appropriate topological vector subspace of $C^\infty$. It is then used to reduce the NSE to a Poincar\'e-Dulac normal form on $E^\infty$.
In section \ref{secFinal}, we review more related results and pose some open questions.


\section{Limit of the  Dirichlet quotients }
\label{secDirichlet}

By re-writing the energy equality \eqref{ee} in the form 
$$\frac{1}{2}\frac{d}{dt}|u|^2+\nu\frac{\|u\|^2}{|u|^2} |u|^2=0,$$
it is natural to study the limit as $t\to \infty$ of the  {\it  Dirichlet quotients} $$\lambda(t)= \frac{\|u(t)\|^2}{|u(t)|^2}.$$

\medskip
This is the beginning of a long process leading eventually to a  {\it  normal form } of  NSE.
One has the following results  (\cite{FS1, FS2}).

\begin{theorem}[\cite{FS1, FS2}]\label{Dirichlet}
	Let $u_0\in \mathcal R\setminus\{0\}$. 
	\begin{enumerate}[label={\rm (\roman*)}]
	\item $\lim_{t\to \infty}\lambda (t)=\Lambda(u_0)$ exists and belongs to $\sigma(A)$.
	\item $\lim_{t\to \infty}e^{\nu \Lambda(u_0)t} u(t)$ exists and belongs to $R_{\Lambda(u_0)}H$.
	\item There exist analytic submanifolds $M_k,$ $k=1,2,\ldots,$ of $\mathcal R$ having codimension $m_1+m_2+\ldots+m_k$ such that
	 $$\mathcal R=M_0\supset M_1\supset M_2\supset\ldots$$
	\item $M_k$ is invariant by the nonlinear semi-group $S(t)$ generated by the Navier-Stokes equation, that is $S(t)M_k\subset M_k,$ $\forall t\geq 0.$
	\item $u_0\in M_{k-1}\setminus M_k$ if and only if $\Lambda(u_0)=\Lambda_k,$ for $k=1,2,\ldots$. Consequently, $u_0\in M_{k-1}$ if and only if $\Lambda(u_0)\ge \Lambda_k$.
\item The tangent space of $M_k$ at $0$ is $M_k^{\rm lin}$, for $k=1,2,\ldots,$ where 
$$M_k^{\rm lin}=\lbrace u_0\in V; R_{\Lambda_1}u_0=\ldots=R_{\Lambda_{k-1}}u_0=0\rbrace.$$
	\end{enumerate}
	       \end{theorem}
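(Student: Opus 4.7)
The plan is to control the Dirichlet quotient $\lambda(t)$ via a differential identity and then identify its limit through a spectral decomposition. Taking the inner product of the NSE with $u$ recovers \eqref{ee}, while taking the inner product with $Au$ gives $\tfrac{1}{2}\tfrac{d}{dt}\|u\|^2+\nu|Au|^2+\langle B(u,u),Au\rangle=0$. These combine to the key identity
$$\lambda'(t)=-\frac{2\nu}{|u(t)|^2}\bigl(|Au(t)|^2-\lambda(t)^2|u(t)|^2\bigr)-\frac{2}{|u(t)|^2}\langle B(u,u),Au\rangle.$$
The first term is nonpositive by the Cauchy--Schwarz inequality for the self-adjoint operator $A$, and vanishes exactly on eigenvectors. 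Using the higher regularity bounds available for $u_0\in\mathcal R$ together with the crude estimate $|u(t)|\le|u_0|e^{-\nu\Lambda_1 t}$, one shows that $\lambda(t)$ stays bounded and that the nonlinear remainder is integrable in $t$, so that $\Lambda(u_0):=\lim_{t\to\infty}\lambda(t)$ exists.

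To see that $\Lambda(u_0)\in\sigma(A)$ and to prove (ii), I would introduce the rescaled spectral components
$$\eta_k(t):=e^{\nu\Lambda_k t}R_k u(t),\qquad k=1,2,\ldots,$$
which satisfy $\eta_k'(t)=-e^{\nu\Lambda_k t}R_k B(u(t),u(t))$. The crucial bootstrap consists in iteratively upgrading the a priori exponential decay of $u$: starting from the base rate furnished by \eqref{ee}, the quadratic structure of $B$ forces increasingly rapid decay on successive spectral blocks, rendering the right-hand side of the $\eta_k$-equation integrable. Each $\eta_k$ therefore has a limit $\eta_k^\infty\in R_k H$; if $k^\ast$ is the smallest index with $\eta_{k^\ast}^\infty\neq 0$ then
$$e^{\nu\Lambda_{k^\ast}t}u(t)=\eta_{k^\ast}(t)+\sum_{k>k^\ast}e^{-\nu(\Lambda_k-\Lambda_{k^\ast})t}\eta_k(t)\longrightarrow \eta_{k^\ast}^\infty,$$
which forces $\lim\lambda(t)=\Lambda_{k^\ast}\in\sigma(A)$ and yields (ii) with $\Lambda(u_0)=\Lambda_{k^\ast}$.

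For (iii)--(vi), I would define the asymptotic maps $\Phi_j:\mathcal R\to R_j H$ by $\Phi_j(u_0):=\eta_j^\infty$, and set
$$M_k:=\{u_0\in\mathcal R:\Phi_1(u_0)=\cdots=\Phi_k(u_0)=0\}.$$
The characterization (v) is immediate from the previous paragraph, and the invariance (iv) follows from the semigroup identity $\Phi_j(S(t)u_0)=e^{\nu\Lambda_j t}\Phi_j(u_0)$, which is a consequence of the uniqueness of the asymptotic limits. The analyticity of the Navier--Stokes flow in $u_0$, combined with the uniformity of the bootstrap estimates on neighborhoods in $\mathcal R$, makes each $\Phi_j$ analytic into the finite-dimensional space $R_j H$. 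Linearizing the NSE at $0$ gives $d\Phi_j(0)=R_j$, which is surjective; the implicit function theorem then exhibits $M_k$ as an analytic submanifold of codimension $m_1+\cdots+m_k$ whose tangent space at $0$ is the intersection of the kernels $\ker d\Phi_j(0)$, giving $M_k^{\rm lin}$ and so (iii) and (vi).

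The main obstacle is precisely this uniform analytic dependence of the $\Phi_k$ for large $k$. A priori the forcing $e^{\nu\Lambda_k t}R_k B(u,u)$ in the equation for $\eta_k$ grows exponentially in $t$, and its integrability has to be extracted by iteratively improving the decay rate of $u$ block by block. Carrying these bootstrap estimates out with constants uniform in the initial datum on a neighborhood in $\mathcal R$ --- which is what both propagates analyticity of $\Phi_k$ and supplies the submanifold regularity needed for the implicit function theorem --- is the delicate technical heart of the argument.
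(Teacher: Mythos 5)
Your proposal follows the paper's architecture for most of the theorem. Your identity for $\lambda'(t)$ is exactly the paper's \eqref{lam1} written in terms of $u$ rather than $v=u/|u|$ (note $|Au|^2-\lambda^2|u|^2=|(A-\lambda)u|^2$), and the closure is the same; just be aware that the natural bound on the nonlinear term carries a factor of $\lambda$ itself (the paper gets $C\|u\|\,|Au|\,\lambda$), so ``the remainder is integrable'' must be read as a Gronwall inequality with integrable coefficient rather than a direct integration. Your maps $\Phi_j(u_0)=\eta_j^\infty=R_ju_0-\int_0^\infty e^{\nu\Lambda_jt}R_jB(S(t)u_0,S(t)u_0)\,dt$ are literally the paper's $\Phi_j$, and the appeal to analyticity of $(t,v)\mapsto S(t)v$ for (iii)--(vi) is the same (minor slip: the semigroup identity is $\Phi_j(S(t)u_0)=e^{-\nu\Lambda_jt}\Phi_j(u_0)$, with a minus sign, which is what makes the zero set invariant).

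The genuine divergence is in how you show $\Lambda(u_0)\in\sigma(A)$. The paper does this softly: \eqref{lam1} gives $(A-\lambda)v\in L^2(t,\infty)$, so along some $t_j\to\infty$ one has $(A-\lambda(t_j))v(t_j)\to0$ and $v(t_j)\to\bar v$ with $|\bar v|=1$, and closedness of $A$ yields $A\bar v=\Lambda\bar v$ --- no bootstrap is needed for (i). You instead run the spectral bootstrap on $\eta_k=e^{\nu\Lambda_kt}R_ku$, which is the machinery the paper delegates to FS1 (its Proposition 1 and Lemma 1) only for part (ii), i.e.\ for \eqref{udec} and \eqref{iminusR}. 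Your route can be made to work and buys (i) and (ii) simultaneously, but two points your sketch elides must be supplied. First, the existence of $k^\ast$ (that not every $\eta_k^\infty$ vanishes) is not a consequence of the bootstrap alone; it needs the lower bound on $|u(t)|$ coming from $\tfrac{d}{dt}\log|u|^2=-2\nu\lambda$ and the boundedness of $\lambda$ --- you have this ingredient from the first paragraph but never connect it. Second, the displayed tail $\sum_{k>k^\ast}e^{-\nu(\Lambda_k-\Lambda_{k^\ast})t}\eta_k(t)$ cannot be controlled term by term: the bootstrap only yields convergence of $\eta_k$ for $\Lambda_k<2\Lambda_{k^\ast}$, so the infinitely many high modes must be handled as a single block via the decay of $e^{-\nu At}$ on the orthogonal complement (this is exactly what \eqref{iminusR} encodes), and the modes $k<k^\ast$ need a quantitative rate from the integral formula for $\eta_k$, not merely $\eta_k^\infty=0$. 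With those repairs the argument closes and is equivalent to the paper's.
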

\begin{proof}
We recall  elementary estimates for regular solutions and large $t$:
 \beq\label{lam4}
 \int_t^\infty \|u(\tau)\|^2\le C|u(t)|^2,\quad  \int_t^\infty \|Au(\tau)\|^2\le C\|u(t)\|^2.
 \eeq

Let $v(t)=u(t)/|u(t)|$. Note that $|v(t)|=1$ and $\|v(t)\|^2=\lambda(t)$.
One can derive a differential equation for $\lambda(t)$:
 \beq\label{lam1}
 \frac12\frac{d\lambda}{dt}+\nu |(A-\lambda)v|^2=-|u|\inprod{B(v,v)}{(A-\lambda)v}.
 \eeq
It follows that 
 \beqs
 \frac{d\lambda}{dt}+\nu |(A-\lambda)v|^2\le C\|u\| |Au|\lambda.
 \eeqs

 Neglecting the second term on the left-hand side, and using  Gronwall lemma  together with \eqref{lam4}, we obtain for sufficiently large $t'>t>0$ that
 \beq\label{lam3}
 \lambda(t')\le \lambda(t)e^{C\int_t^\infty \|u(\tau)\| |Au(\tau)|d\tau }
\le \lambda(t)e^{C' |u(t)| \|u(t)\|}.
 \eeq

 Using the fact that $|u(t)|$ and $\|u(t)\|$ go to zero as $t\to\infty$, and letting $t'\to\infty$, then $t\to\infty$, we obtain
 \beqs
0<\limsup_{t'\to\infty}\lambda(t')\le \liminf_{t\to\infty}\lambda(t)<\infty. 
 \eeqs
 Thus 
 \beqs
 \lim_{t\to\infty}\lambda(t)=\Lambda \text{ exists and belongs to }(0,\infty).
 \eeqs
 
 By \eqref{lam1}, $(A-\lambda)v\in L^2(t,\infty)$. Then there exist $t_j\to\infty$ such that $(A-\lambda(t_j))v(t_j)\to0$ and $v(t_j)\to\bar v$ in $H$. Thus $Av(t_j)\to \Lambda \bar v$. Since $A$ is a closed operator, this yields $\bar v\in \DA$ and $A\bar v=\Lambda \bar v$. Note also that $|\bar v|=1$, hence, $\Lambda\in \sigma(A)$.

After this eigenvalue $\Lambda$ is established, ones can prove in \cite[Proposition 1 and Lemma 1]{FS1} that
\beq\label{udec}
\|u(t)\|\le Ce^{-\nu\Lambda t},
\eeq
\beq\label{iminusR}
\lim_{t\to\infty}\|(I-R_{\Lambda})e^{\nu\Lambda t}u(t)\|= 0.
\eeq
It remains to deal with $R_{\Lambda}e^{\nu\Lambda t}u(t)$. We have
\beqs
\ddt (R_{\Lambda}e^{\nu\Lambda t}u(t)) + e^{\nu\Lambda t} R_{\Lambda}B(u(t),u(t))=0.
\eeqs
Hence, for $s>t>0$:
\beqs
e^{\nu\Lambda s}R_{\Lambda}u(s)-e^{\nu\Lambda t}R_{\Lambda}u(t)=-\int_t^s e^{\nu\Lambda \tau} R_{\Lambda}B(u(\tau),u(\tau))d\tau.
\eeqs

By the exponential decay \eqref{udec} of $u(t)$, we see that the right-hand side goes to $0$ as $s,t\to\infty$. Therefore, by Cauchy's criterion,
\beqs
\lim_{t\to\infty} e^{\nu\Lambda t} R_{\Lambda}u(t)\text{ exists and belongs to }R_\Lambda H.
\eeqs
Together with \eqref{iminusR}, this yields (ii).

The sets $M_k$'s can be defined as level sets $M_k=\Phi_k^{-1}(0)$, where the functions $\Phi_k$'s are as follows.
For $k=1$,  $\Phi_1:\mathcal R\to R_{\Lambda_1}V$ is given by
\beqs
\Phi_1(v)=R_{\Lambda_1}v-\int_0^\infty e^{\Lambda_1 t} R_{\Lambda_1}B(S(t)v,S(t)v)dt.
\eeqs
For $k\ge 2$, the functions  $\Phi_{k}:M_{k-1}\to R_{\Lambda_k}V$ is defined by
\beqs
\Phi_k(v)=R_{\Lambda_k}v-\int_0^\infty e^{\Lambda_k t} R_{\Lambda_k}B(S(t)v,S(t)v)dt.
\eeqs

Then the analyticity of $M_k$'s results from the analyticity of the mapping $(t,v)\to S(t)v$ which is  due to  \cite{F74}. 
\end{proof}

\begin{remark}
The following remarks are in order.
\begin{enumerate}[label={(\alph*)}]
 \item (L. Tartar) In the case where $\Omega$ is only bounded in one direction (so that Poincar\' e inequality holds), one has also that the $\lim_{t\to \infty}\lambda (t)=\Lambda ((u_0)$ exists and belongs to the spectrum of $A$, which is not necessarily discrete.

\item   One can prove that the rate of decay given by  $\Lambda(u_0)$ gives also the decay rate of higher Sobolev norms, and also  the convergence of $u(t)e^{t\Lambda(u_0)}$ in all  $H^s$ for any $s>0$, see   \cite{FS3,Gui}. We refer to \cite{Ghi} for various extensions of the convergence of the Dirichlet quotients to other situations, in particular Navier-Stokes and MHD equations on compact Riemannian manifolds.

\item  We recall that for any $u_0\in H$,  the NSE possesses a weak solution $u$  which becomes regular for $t$ sufficiently large. In this case the Dirichlet quotient $\lambda (t)$ converges to an eigenvalue $\Lambda (u(\cdot))$ of $\sigma (A)$ that,  by lack of uniqueness, depends \textit{a priori} on the whole solution $u$.
 
\item  The manifolds $M_k$'s are apparently the only known nonlinear manifolds  invariant under the the Navier-Stokes flow.

\item The geometry of parts of the $M_k$'s that are far from the origin is unknown (see however below for a specific property in the periodic case).
\end{enumerate}

\end{remark}

The invariant manifolds $M_k$'s can also be characterized as in the next result.

\begin{theorem}[Corollary 2 \cite{FS1}]\label{Mkchar}
 The necessary and sufficient condition for $u_0\in M_{k-1}$, or equivalently, $\Lambda(u_0)\ge \Lambda_k$, with $k\ge 2$, is
 \beq\label{Mchar}
 \lim_{t\to\infty} e^{\nu \Lambda_j t}R_{\Lambda_j}S(t)u_0=0 \quad\forall j=1,2,\ldots,k-1.
 \eeq
\end{theorem}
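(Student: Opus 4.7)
The plan is to establish both implications by induction on $k\ge 2$, exploiting the description $M_j = \Phi_j^{-1}(0)$ already set up in the proof of Theorem~\ref{Dirichlet}. The forward direction is immediate from the exponential decay \eqref{udec}: if $u_0\in M_{k-1}$, then by Theorem~\ref{Dirichlet}(v) we have $\Lambda(u_0)\ge \Lambda_k$, so $\|u(t)\|\le Ce^{-\nu \Lambda(u_0) t}\le Ce^{-\nu \Lambda_k t}$, and for each $j \in\{1,\ldots,k-1\}$ this yields
\[
\|e^{\nu\Lambda_j t} R_{\Lambda_j} u(t)\|_H \le e^{\nu\Lambda_j t}\|u(t)\|_H \le Ce^{-\nu(\Lambda_k-\Lambda_j)t}\longrightarrow 0.
\]

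For the reverse implication I use induction; the base case $k=2$ and the inductive step share the same mechanism, so I describe them uniformly. Assume the equivalence is known at all levels below $k$, and that \eqref{Mchar} holds for $j=1,\ldots,k-1$. Applying the inductive hypothesis at levels $j\le k-2$ gives $u_0\in M_{k-2}$, hence $\Lambda(u_0)\ge \Lambda_{k-1}$ and $\|u(t)\|\le Ce^{-\nu\Lambda_{k-1} t}$. Projecting the functional NSE \eqref{NS} onto the eigenspace of $\Lambda_{k-1}$ and multiplying by the integrating factor yields
\[
\frac{d}{dt}\bigl(e^{\nu\Lambda_{k-1} t}R_{\Lambda_{k-1}}u(t)\bigr) = -e^{\nu\Lambda_{k-1} t} R_{\Lambda_{k-1}} B(u(t),u(t)).
\]
Since $R_{\Lambda_{k-1}}$ has finite-dimensional range, all Sobolev norms on its image are equivalent, and a standard bilinear estimate gives $\|R_{\Lambda_{k-1}} B(u,u)\|_H \le C\|u\|^2 \le Ce^{-2\nu\Lambda_{k-1} t}$; the right-hand side of the displayed identity is therefore dominated by $Ce^{-\nu\Lambda_{k-1} t}$, which is integrable on $[0,\infty)$. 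Integrating from $0$ to $T$, the boundary contribution $e^{\nu\Lambda_{k-1} T}R_{\Lambda_{k-1}}u(T)$ tends to zero as $T\to\infty$ by the remaining hypothesis \eqref{Mchar} at $j=k-1$, and passing to the limit yields
\[
R_{\Lambda_{k-1}} u_0 = \int_0^\infty e^{\nu\Lambda_{k-1} t} R_{\Lambda_{k-1}} B(S(t)u_0, S(t)u_0)\,dt,
\]
that is, $\Phi_{k-1}(u_0)=0$, so $u_0\in M_{k-1}$.

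The main bookkeeping obstacle I anticipate is making sure, at each level of the induction, that the integral defining $\Phi_{k-1}$ actually converges along every trajectory starting in $M_{k-2}$; this is precisely what the decay $\|u(t)\|\le Ce^{-\nu\Lambda_{k-1} t}$ delivers, so the induction closes cleanly. Beyond that, the argument reduces to a one-shot integration of an explicit ODE for the finite-dimensional projection $R_{\Lambda_{k-1}}u(t)$, combined with the exponential decay already supplied by Theorem~\ref{Dirichlet}.
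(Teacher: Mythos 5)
Your proof is correct. The paper states this result only as a citation of \cite{FS1} without giving an argument, but what you wrote is exactly the proof its own machinery suggests: integrating the projected equation as in the proof of Theorem \ref{Dirichlet}(ii) gives the identity $\Phi_j(u_0)=\lim_{T\to\infty}e^{\nu\Lambda_j T}R_{\Lambda_j}S(T)u_0$ for $u_0\in M_{j-1}$, and combining this with the level-set description $M_j=\Phi_j^{-1}(0)$ and an induction that guarantees the integral defining $\Phi_j$ converges along the trajectory (via the decay \eqref{udec}) yields both implications.
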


Note in \eqref{Mchar} that it only requires the projection $R_{\Lambda_j}$ of $S(t)u_0$, not the whole $S(t)u_0$.

\medskip
One has further results on properties of the manifolds $M_k$'s in the periodic case.

\begin{theorem}[Remark 7 \cite{FS1}, Theorem 2 and Proposition 4 \cite{FS2}]\label{unblin}
In the periodic case, each $M_k$ is a smooth analytic, truly nonlinear manifold in $\mathcal R$, and contains a \emph{linear} submanifold $L_k$ of infinite dimension. Consequently, $M_k$ is unbounded in $V$.
\end{theorem}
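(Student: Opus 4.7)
My plan is to split the theorem into three substatements: (A) smoothness/analyticity of $M_k$, (C) the existence of an infinite-dimensional linear submanifold $L_k \subset M_k$, and (B) the ``truly nonlinear'' statement $M_k \neq M_k^{\rm lin}$. Claim~(A) is already a byproduct of Theorem~\ref{Dirichlet}(iii), where $M_k$ was realized as $\Phi_k^{-1}(0)$ for an analytic function built from the analytic Navier--Stokes semi-flow $S(t)$, so I would simply invoke that. The unboundedness of $M_k$ in $V$ is then automatic from (C), since any infinite-dimensional linear subspace of a normed space is unbounded.

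For (C) I would exploit the shear flows available in the periodic setting. Fix $\veck_0 \in \Z^n \setminus \{0\}$ and a direction $\vece \in \R^n$ with $\vece \cdot \veck_0 = 0$, choose a cutoff $m_0 \in \N$ with $m_0^2 |\veck_0|^2 \ge \Lambda_{k+1}$, and consider
\[
u_0(x) = \phi(\veck_0 \cdot x)\,\vece, \qquad \phi(y) = \sum_{|m| \ge m_0} b_m e^{imy}, \quad b_{-m} = \overline{b_m}.
\]
Such $u_0$ is smooth and divergence-free, and the identity
\[
(u_0 \cdot \nabla) u_0 = \phi(\veck_0 \cdot x)\,\phi'(\veck_0 \cdot x)\,(\vece \cdot \veck_0)\,\vece = 0
\]
gives $B(u_0, u_0) = 0$. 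Since the shear form is preserved by the linear heat semigroup, $u(t) = e^{-\nu A t} u_0$ satisfies $B(u(t), u(t)) \equiv 0$ and is therefore the global regular NSE solution from $u_0$. A direct Fourier computation gives $\lambda(t) \to m_0^2 |\veck_0|^2 \ge \Lambda_{k+1}$ (provided $b_{\pm m_0} \ne 0$, otherwise the limit is larger), so by Theorem~\ref{Dirichlet}(v), $u_0 \in M_k$. Letting $b_m$ and $\vece$ range freely gives an infinite-dimensional linear subspace $L_k \subset M_k$, proving (C).

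For (B) I would produce $u_0 \in M_k^{\rm lin} \setminus M_k$ by exploiting that, in the periodic case, differences of large lattice vectors can be small. Pick non-parallel $\veck_1, \veck_2 \in \Z^n$ with $|\veck_1|^2, |\veck_2|^2 \ge \Lambda_{k+1}$ and $|\veck_1 - \veck_2|^2 = \Lambda_j$ for some $j \le k$ (e.g.\ $\veck_1 = (N, 1, 0,\ldots)$, $\veck_2 = (N, 0, 0,\ldots)$ with $N$ large, so $j = 1$), and set
\[
u_0 = a_1 e^{i\veck_1 \cdot x} + a_2 e^{i\veck_2 \cdot x} + \mathrm{c.c.}, \qquad a_i \cdot \veck_i = 0.
\]
Then $R_{\Lambda_\ell} u_0 = 0$ for all $\ell \le k$, so $u_0 \in M_k^{\rm lin}$. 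A short Fourier calculation gives that the coefficient of $e^{i(\veck_1 - \veck_2) \cdot x}$ in $(u_0 \cdot \nabla) u_0$ equals $i\bigl[(\bar a_2 \cdot \veck_1) a_1 - (a_1 \cdot \veck_2) \bar a_2\bigr]$, whose Leray projection onto the $\Lambda_j$-eigenspace is nonzero for generic amplitudes. Applying $R_{\Lambda_j}$ to the Duhamel identity yields
\[
e^{\nu \Lambda_j t} R_{\Lambda_j} u(t) = -\int_0^t e^{\nu \Lambda_j s}\,R_{\Lambda_j} B(u(s), u(s))\,ds,
\]
and since every contribution to $R_{\Lambda_j} B(u(s), u(s))$ involves at least one high-frequency mode (decaying strictly faster than $e^{-\nu \Lambda_j s}$), integrability and a nonzero limit for generic $a_i$ follow; Theorem~\ref{Mkchar} then gives $u_0 \notin M_k$, proving (B). The main obstacle I foresee is exactly this last integrability-plus-nonvanishing step: one must verify that the leading-order Duhamel term is not accidentally cancelled by the multilinear corrections, which calls for a controlled Duhamel expansion in the spirit of Section~\ref{secexpansion}.
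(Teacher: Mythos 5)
Your decomposition is sound, and parts (A) and (C) follow essentially the paper's own route: analyticity is indeed just quoted from Theorem \ref{Dirichlet}(iii), and your shear flows $u_0=\phi(\veck_0\cdot x)\,\vece$ with $\vece\cdot\veck_0=0$ and low modes removed are the same mechanism the paper uses (there with the amplitude direction $(1,\dots,1)$ fixed and the wavevector varying in its orthogonal lattice plane, i.e.\ the dual parametrization; compare also Remark \ref{Maper}) — in both cases $(u\cdot\nabla)u=0$, the heat flow solves NSE, and membership in $M_k$ follows from the decay rate. Where you genuinely diverge is part (B). The paper does \emph{not} exhibit a point of $M_k^{\rm lin}\setminus M_k$; it argues by contradiction: if $M_k$ were linear it would equal its tangent space $M_k^{\rm lin}$, and invariance under $S(t)$, differentiated at $t=0$, would force the purely algebraic identity $\sum_{j\le k}R_{\Lambda_j}B(v,v)=0$ for all $v\in\DA$ with $\sum_{j\le k}R_{\Lambda_j}v=0$; this is then refuted by an explicit two-high-mode $v$ — exactly the Fourier interaction $\veck_1-\veck_2$ you compute. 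The payoff of the paper's route is that no dynamics is needed: one only evaluates $B$ at a single vector. Your route is constructive and gives more (an explicit $u_0\in M_k^{\rm lin}\setminus M_k$), but it must pay for that with the Duhamel analysis you flag. Two remarks on closing it: first, the integrability of $e^{\nu\Lambda_j s}R_{\Lambda_j}B(u(s),u(s))$ is \emph{not} automatic from ``one factor is high-frequency'' — a priori $u(s)$ only decays like $e^{-\nu\Lambda_1 s}$, so the integrand is $O(e^{-\nu(2\Lambda_1-\Lambda_j)s})$, which is integrable only because your target mode is $j=1$; for general $j\le k$ you would first need to pin down $\Lambda(u_0)$. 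Second, the non-cancellation is handled by the standard rescaling $u_0\mapsto\varepsilon u_0$: the leading Duhamel contribution is exactly $\varepsilon^2\int_0^\infty e^{\nu\Lambda_1 s}R_{\Lambda_1}B(e^{-\nu As}u_0,e^{-\nu As}u_0)\,ds$, computable in closed form and nonzero for your amplitudes, while the multilinear corrections are $O(\varepsilon^3)$ with uniformly integrable weight; so for small $\varepsilon$ the limit in Theorem \ref{Mkchar} is nonzero and $\varepsilon u_0\notin M_k$. With those two points made explicit your argument is complete, and one should finally note (as the paper does) that $M_k\ne M_k^{\rm lin}$ implies $M_k$ is not linear because a linear manifold through $0$ coincides with its tangent space there.
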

\begin{proof}
For the nonlinearity, we argue by contradiction. Suppose $M_k$ is linear. Then it must coincide with its tangential linear manifold at $0$, which is $M_k^{\rm lin}$.
Together with the invariance of $M_k$ under the semigroup $S(t)$, it follows that
$$\sum_{j=1}^k R_{\Lambda_j} B(v,v)=0\text{ whenever  }v\in \DA \text{ such that }\sum_{j=1}^k R_{\Lambda_j} v=0.$$
 By construction of an explicit counter example (see \cite{FS2}), this fact is shown to be not true.

The construction of the invariant  linear submanifolds $L_k$'s is based on special motions of the Navier-Stokes equations in the periodic case that we describe now.


For $\mathbf k=(k_1,\ldots,k_n)\in \Z^n\setminus\{0\}$ such that $k_1+\ldots+k_n=0$, we  consider 
\begin{equation}\label{Uform}
u(x,t)=(\varphi(\mathbf  k\cdot x,t),\ldots,\varphi(\mathbf k\cdot x,t))\quad (n\text { times}). 
\end{equation}
where $\varphi(y,t)$ is a scalar function with $y,t\in \R$.
One can verify that $${\rm div}\;u=0\text{ and }(u\cdot \nabla) u=0.$$

Then any (spatial) $L$-periodic solution $\varphi$ of the linear heat equation
$$\frac{\partial \varphi}{\partial t}-\nu|\mathbf k|^2\frac{\partial^2 \varphi}{\partial y^2}=0$$
leads to a  solution of the  NSE of the form $u(x,t)$ in \eqref{Uform} and $p=$ constant. 
Clearly, such a solution satisfies 
$$\|u(t)\|_m\le C_m e^{-\nu|\veck|^2 t}\quad \forall m\ge 0.$$
Thus, $u(0)\in M_k$ if $|\veck|^2>\Lambda_k$.

Based on the above observation, we set 
\begin{align*}
 \mathcal U=\lbrace u\in V:\; & u(x)=(\varphi(\mathbf k\cdot x),\ldots,\varphi(\mathbf k\cdot x)),\;  \veck=(k_1,\ldots,k_n)\in \Z^n,\\ &k_1+\ldots+k_n=0,\; \varphi(y) \text{ is $L$-periodic on $\R$}\rbrace.
\end{align*}

Then $\mathcal U\cap M_k^{\text{lin}}$  is an infinite-dimensional submanifold of $M_k.$
\end{proof}

\medskip
\begin{remark}\label{Maper}
The family of manifolds constructed in Theorem \ref{unblin} is extended to the following more general ones, which are also used to analyze the decay of the helicity, see \cite{FHN1,FHN2} and subsection \ref{helisub} below.

Consider the periodic case in $\R^3$. Let $\veca$ be a vector in $\R^3$ such that its orthogonal plane has nontrivial intersection with $\Z^3$, this means
\[\orthplane:=\{\veck\in\Z^3, \veck\cdot \veca = 0\}\ne \{0\}.\]
Define the linear manifold $\directmanifold$ in $V$ by
\beq\label{M-directional}
\directmanifold=\{u\in V:u=\sum_{\veck\in\orthplane} a_\veck e^{i\veck\cdot x},\ a_\veck \textrm{ is (complex) collinear to }\veca, \textrm{ for all } \veck\in\orthplane \}.
\eeq

For $u_0\in\directmanifold$, $u(t)=e^{-tA}u_0$ belongs to $\directmanifold$ for all $t\in[0,\infty)$ and solves the linearized NSE
\beqs
\begin{cases}
\bds\frac{du}{dt}\eds + Au =0,\quad t> 0,\\
u(0)=u_0\in V.
\end{cases}\eeqs
as well as the NSE \eqref{NS}. The fact that the nonlinear term $B(u(t),u(t))$ vanishes in this situation can be easily verified.
Therefore, $\directmanifold$ is an invariant linear manifold in $\solnset$. Clearly, the cardinality of $\orthplane$ is infinite, and hence $\directmanifold$ is infinite-dimensional.
\end{remark}

\medskip
\begin{remark} Regarding the structure of the set $\mathcal R$, we have the following remarks.
\begin{enumerate}[label={(\alph*)}]
 \item  Since $\mathcal R$ is an open set, Theorem \ref{unblin} implies that in the periodic case, it contains an {\it unbounded} open subset of $V$.  This fact was unknown before and a proof of this fact is still unknown when $\Omega $ is an arbitrary  smooth bounded open subset of $\R^3$.
Also, the construction of the linear manifolds $L_k$ is explicit. (See \cite{Z} for another construction of arbitrary large solutions in the three-dimensional periodic case.)  

\item See Bondarevsky (\cite{Bon}) for a construction of unbounded star-shaped subsets of  $\mathcal R$ when $\Omega$ is a bounded open set of  $\R^3$ (with  Dirichlet boundary conditions).

\item By totally different arguments, in the case of $\R^3,$ one can obtain the global existence of solutions starting from initial data with small low frequencies but allowing large high frequencies (oscillations). See Cannone, Meyer and Planchon (\cite{CMP}),  and  Chemin and Gallagher (\cite{CG1, CG2}). 
\end{enumerate}

\end{remark}

\begin{remark}
The Dirichlet  quotients (interpreted as  the ratio of the { \it enstrophy} over the  {\it energy}) have been used to study geophysical flows, in particular to give a precise mathematical sense (and justify) the physicists' {\it selective decay principle}:
\begin{quote}
After a long time, solutions of the quasi-geostrophic equations and/or the two-dimensional incompressible Navier-Stokes equations approach those states which minimize the enstrophy for a given energy.
\end {quote}
For more details we refer to \cite {MW1, MSW, MW2, Zhan, Zhan2}.
\end{remark}

\begin{remark}\label{backrmk}
In a totally different context, the Dirichlet quotients have been used in \cite{CFKM} to study the backward behavior of solutions to the  periodic Navier-Stokes equations with (non-potential) time-independent body forces. More precisely  it is proven there  that the set of initial data for which the solution exists for all negative times and has exponential growth is rather rich, actually it is dense in the phase space of the NSE, answering positively a question of Bardos-Tartar \cite{BT}.
\end{remark}

Coming back to the study of NSE with potential forces, it has been proven in \cite{FS5}, by extending a result of Hartman (\cite{Ha} chap. IX, th. 6.2) for ODE's, that the NSE  have invariant manifolds with "slow" decay. More precisely,

\begin{theorem}[\cite {FS5}]\label{slow}
For any $k=1,2,\ldots$ there exist an open neighborhood $U_k$ of $0$ in $\mathcal R$ and a submanifold $F_k$ without boundary of $U_k$ such that
\begin{enumerate}[label={\rm (\roman*)}]
\item $F_k$ is $C^1$ and analytic outside the origin.
\item {\rm dim}\,$F_k=m_1+m_2+\ldots+m_k = N_k.$
\item $F_k$ is invariant, {\it i.e.} $S(t)F_k\subset F_k,\; \forall t\geq 0.$
\item The tangent space to $F_k$ at the origin is $(R_1+\ldots+R_k)H.$
\item $v\in F_k\setminus \lbrace 0\rbrace$ implies $\Lambda(v)\leq \Lambda_k.$
\end{enumerate}
\end{theorem}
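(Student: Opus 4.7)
The plan is to construct $F_k$ as the graph of a map $\Phi_k : U \to Q_k H$, where $P_k := R_1 + \cdots + R_k$ is the spectral projection onto the slow eigenspaces (of total dimension $N_k$), $Q_k := I - P_k$, and $U$ is a small neighborhood of $0$ in $P_k H$. Writing $u = p + q$ with $p = P_k u$, $q = Q_k u$, the NSE splits into a finite-dimensional slow component on $P_k H$ whose linearized spectrum is $\{\nu\Lambda_1,\ldots,\nu\Lambda_k\}$, coupled to an infinite-dimensional fast component on $Q_k H$ whose linearized spectrum lies in $[\nu\Lambda_{k+1},\infty)$. The key structural ingredient is the spectral gap $\Lambda_k<\Lambda_{k+1}$. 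Imposing $\Phi_k(0)=0$ and $D\Phi_k(0)=0$ automatically delivers properties (ii) and (iv).

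For the construction, I would apply a Hadamard graph transform, which is the infinite-dimensional analogue of the argument of Hartman (ch.~IX, th.~6.2) invoked in \cite{FS5}. Fix a small $t_0>0$ and work on the space $\mathcal G$ of Lipschitz maps $\Phi:U\to Q_k H$ with $\Phi(0)=0$ and small Lipschitz constant. Since $B(u,u)$ is quadratic, $S(t_0)$ is a small perturbation of $e^{-\nu A t_0}$ on a small ball, so $p\mapsto P_k S(t_0)(p+\Phi(p))$ is a local diffeomorphism near $0$; call its inverse $\Psi_\Phi$. Define the graph transform by $(\mathcal T\Phi)(p'):=Q_k S(t_0)(\Psi_\Phi(p')+\Phi(\Psi_\Phi(p')))$. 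A contraction argument in $\mathcal G$, exploiting the spectral gap and the quadratic smallness of $B$ on a small ball, yields a unique fixed point $\Phi_k$, producing a graph $F_k$ invariant under $S(nt_0)$; invariance under the full continuous semigroup then follows from uniqueness together with continuity of $S(t)v$ in $t$. The $C^1$ regularity comes from the fixed-point theorem applied in a $C^1$-norm; analyticity away from the origin follows from the analyticity of $(t,v)\mapsto S(t)v$ established in \cite{F74}, so that $F_k$ coincides locally with the image of an analytic parametrization at every point except possibly the origin.

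For property (v), recall from Theorem \ref{Dirichlet}(vi) that the tangent space of $M_k$ at $0$ is $M_k^{\rm lin} = Q_k V$, which is complementary to the tangent space $P_k H$ of $F_k$. Since $\dim F_k = N_k = \operatorname{codim} M_k$, the manifolds $F_k$ and $M_k$ are transverse at $0$; after shrinking $U_k$ if necessary this transversality forces $F_k\cap M_k=\{0\}$, so any $v\in F_k\setminus\{0\}$ lies outside $M_k$, and Theorem \ref{Dirichlet}(v) yields $\Lambda(v)\le \Lambda_k$. The main technical obstacle is executing the graph-transform contraction in the infinite-dimensional, unbounded setting of the NSE: the function space for $\Phi$ must simultaneously tame the unboundedness of $B$, accommodate the analytic smoothing of the Stokes semigroup, and close the contraction estimate even in the absence of a strong spectral gap of the form $2\Lambda_k<\Lambda_{k+1}$ (which fails for large $k$). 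The parabolic smoothing of $e^{-\nu A t}$ for $t>0$ and the $O(|p|^2)$ smallness of $\Phi_k(p)$ near $0$ are the essential tools for making the estimates close on a sufficiently small neighborhood of the origin.
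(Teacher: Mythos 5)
The survey does not actually prove Theorem \ref{slow}; it only cites \cite{FS5} and records that the result is obtained ``by extending a result of Hartman (\cite{Ha} chap.~IX, th.~6.2) for ODE's.'' Your graph-transform construction over $P_kH=(R_1+\cdots+R_k)H$ is exactly the Hartman-type route being alluded to, and most of your outline is sound: the Lipschitz and $C^1$ closures of the contraction only need the gap $\Lambda_k<\Lambda_{k+1}$ (your worry about $2\Lambda_k<\Lambda_{k+1}$ is relevant only to $C^2$, which the theorem does not claim); the graph structure with $\Phi_k(0)=0$, $D\Phi_k(0)=0$ gives (ii) and (iv); and your transversality argument for (v) is clean and consistent with the Corollary following the theorem (note $T_0M_k$ should be read as $\ker R_{\Lambda_1}\cap\cdots\cap\ker R_{\Lambda_k}$, matching the codimension count, so it is indeed complementary to $P_kH$, and $v\notin M_k$, $v\neq 0$ forces $\Lambda(v)\le\Lambda_k$ by Theorem \ref{Dirichlet}(v)).

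The genuine gap is item (i), analyticity outside the origin. You assert it ``follows from the analyticity of $(t,v)\mapsto S(t)v$, so that $F_k$ coincides locally with the image of an analytic parametrization at every point except possibly the origin.'' This inference is not valid: the fixed point of your graph transform is only known to be $C^1$, and the image of a merely $C^1$ manifold (or a $C^1$ transversal swept by the flow) under an analytic map is again only $C^1$ in the transverse directions. Analyticity away from $0$ is a real phenomenon here --- already the toy resonant system $\dot x=-x$, $\dot y=-2y+x^2$ has slow manifolds $y=-x^2\log|x|+Cx^2$, which are $C^1$ but not $C^2$ at $0$ and analytic elsewhere --- but it requires a separate argument (e.g.\ solving the invariance equation along the non-vanishing slow vector field away from the origin in an analytic category, or an analytic version of the fixed-point scheme on annular domains), not just analyticity of the semigroup. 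Secondary, fixable points: uniqueness of the fixed point holds only within your chosen Lipschitz class on the chosen neighborhood (the $F_k$ themselves are not unique, as the survey remarks), and passing from invariance under $S(nt_0)$ to $S(t)F_k\subset F_k$ for all $t\ge0$ needs either a cutoff making the graph globally defined over $P_kH$ or a dynamical characterization of the graph, since $S(s)F_k$ need not be a graph over the same base neighborhood.
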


\begin{corollary}
$M_k\cap F_{k+1}$ is an invariant submanifold of $U_k$, of dimension $m_k$ that satisfies 
\begin{enumerate}[label={\rm (\roman*)}]
\item $\Lambda (u)=\Lambda_k$ for all $u\in M_k\cap F_{k+1}\setminus \lbrace 0\rbrace.$
\item $M_{k+1}, F_{k-1}$ and $M_k\cap F_{k+1}$ are transverse at $0.$
\end{enumerate}
\end{corollary}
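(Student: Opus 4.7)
The plan is to treat this as a transversality corollary, packaging together a transverse-intersection claim, an invariance claim, a pinch on the Dirichlet-quotient limit, and another transversality at the origin---none of which requires new analytic input beyond what already went into constructing the $M_k$'s and $F_k$'s.

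First I would verify that $M_k$ and $F_{k+1}$ meet transversally at $0$ and deduce that the intersection is a submanifold of the stated dimension. The tangent spaces there are given explicitly: $T_0 M_k = M_k^{\rm lin}$, a closed subspace of $V$ of finite codimension characterized by the vanishing of the first few spectral projections $R_j$, and $T_0 F_{k+1} = (R_1+\cdots+R_{k+1})H$, a finite-dimensional space spanned by the first few eigenspaces of $A$. The spectral decomposition $V=\bigoplus_j R_j H$ makes it immediate that $T_0 M_k + T_0 F_{k+1}=V$ and that $T_0 M_k \cap T_0 F_{k+1}$ is a single eigenspace of the appropriate index (of dimension $m_k$ in the paper's indexing convention). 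A standard Banach-manifold implicit function theorem---one factor has finite codimension, the other finite dimension---then produces $M_k\cap F_{k+1}$ as a submanifold of $U_k$ of dimension $m_k$ in a neighborhood of $0$, analytic away from $0$ because both $M_k$ and $F_{k+1}\setminus\{0\}$ are analytic there. Invariance under $S(t)$ is inherited at once from the invariance of each factor.

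For item (i) the argument is a two-sided pinch on $\Lambda(u)$: if $u\in M_k\cap F_{k+1}\setminus\{0\}$, then Theorem~\ref{Dirichlet}(v) (applied since $u\in M_k$) gives a one-sided bound, Theorem~\ref{slow}(v) (applied since $u\in F_{k+1}\setminus\{0\}$) gives the matching bound on the other side, and together they force $\Lambda(u)=\Lambda_k$.

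For item (ii) the triple transversality at $0$ reduces once more to a linear-algebra check inside the eigenspace decomposition of $A$: the three tangent spaces $T_0 M_{k+1}$, $T_0 F_{k-1}$, and $T_0(M_k\cap F_{k+1})$ pick up disjoint families of eigenspaces of $A$ (high modes, low modes, and one intermediate eigenspace) whose union exhausts the spectral index set, so their sum fills $V$ with trivial pairwise intersections, which is the transversality condition. The only delicate step in the whole plan is the implicit-function-theorem argument at the origin, where $F_{k+1}$ is only $C^1$ rather than analytic; however, $C^1$ transversality is enough to deduce that the intersection is a $C^1$ submanifold with the prescribed tangent space, so this is more a matter of citing the right form of the theorem than of carrying out a new piece of analysis.
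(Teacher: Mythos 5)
The paper itself offers no proof of this corollary --- it is quoted from \cite{FS5} --- so there is no argument of the paper's to compare yours against; your overall strategy (transversality of the tangent spaces at the origin read off from the spectral decomposition, invariance inherited from both factors, and a two-sided pinch on $\Lambda(u)$ from Theorem \ref{Dirichlet}(v) and Theorem \ref{slow}(v)) is the natural one and is surely what the original reference does. The genuine problem is that you assert the eigenspace bookkeeping closes without checking it, and with the conventions actually stated in this survey it does not. From Theorem \ref{Dirichlet}(v), $u\in M_k\setminus\{0\}$ forces $\Lambda(u)\ge\Lambda_{k+1}$ (not $\ge\Lambda_k$), while Theorem \ref{slow}(v) gives $\Lambda(u)\le\Lambda_{k+1}$ on $F_{k+1}\setminus\{0\}$; your pinch therefore yields $\Lambda(u)=\Lambda_{k+1}$, not $\Lambda_k$. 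Correspondingly $T_0M_k\cap T_0F_{k+1}=R_{\Lambda_{k+1}}H$, of dimension $m_{k+1}$, not $m_k$. And in item (ii) the three tangent spaces you list are the modes $\ge k+2$, the modes $\le k-1$, and the single mode $k+1$: their union misses the $\Lambda_k$-eigenspace, so your claim that they ``exhaust the spectral index set'' and sum to $V$ is false as written. The direct-sum decomposition of $V$ only comes out for the triple $M_k,\ F_{k-1},\ M_{k-1}\cap F_k$ (equivalently $M_{k+1},\ F_k,\ M_k\cap F_{k+1}$).

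The source of the trouble is an internal index inconsistency in the survey itself (compare Theorem \ref{Dirichlet}(iii) with (vi), and the corollary with Theorem \ref{Dirichlet}(v)), so the fault is not entirely yours; but a proof cannot simply gesture at ``the appropriate index.'' You need either to carry the computation through in one fixed convention --- in which case you will find the corollary as printed is off by one in the dimension, in item (i), and in the appearance of $F_{k-1}$ rather than $F_k$ in item (ii) --- or to restate the corollary consistently (e.g.\ as concerning $M_{k-1}\cap F_k$) before proving it. The remaining analytic points you raise (transversality of a finite-codimension analytic manifold with a finite-dimensional $C^1$ manifold via the implicit function theorem, invariance of the intersection under $S(t)$) are fine.
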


\begin{remark}
\begin{enumerate}[label={(\alph*)}]
 \item  While the manifolds $M_k$'s are unique, this is not the case of the $F_k$'s.
\item In the $2D$-periodic case, one can take $F_1=R_1 H.$ This results from the fact that then the function $t\mapsto \|S(t)v\|^2/|S(t)v|^2$ is decreasing for any nonzero $v\in V.$ It is a consequence of \eqref{lam1} and the orthogonal properties
\beqs
\inprod{B(u,u)}{u}=0\quad\text{and} \quad  \inprod{B(u,u)}{Au}=0,
\eeqs
which make the right-hand side of \eqref{lam1} vanish.

\item For some specific examples (such as the viscous Burgers equation or a nonlocal version of it), one can prove that the manifolds $F_k$'s are global. 
\end{enumerate}
\end{remark}

\begin{remark}
We refer to \cite{CFL} for construction of invariant manifolds in a rather general setting, and, in particular, to its Appendix B5 for illuminating comments on slow manifolds.
\end{remark}

\begin{remark}
The nonlinear spectral manifolds have been used in \cite{MaW} to study asymptotic stability issues for the periodic two-dimensional Navier-Stokes equations.
\end{remark}

 The results in Theorem \ref{Dirichlet} suggest that one can go further and look for an asymptotic expansion of the solution. This will eventually lead to the normal form.

We first introduce a technical notion on the spectrum of $A$. More generally,

 
 
 \begin{definition}
 Let $A$ be a, possibly unbounded, linear operator in a space $X$ with spectrum $\sigma(A)$.
 \begin{enumerate}[label={\rm (\roman*)}]
  \item A resonance in  $\sigma(A)$ is a relation of the type
 \begin{equation}\label{reseq}
a_1\Lambda_1+a_2\Lambda_2+\ldots+a_k\Lambda_k=\Lambda,   
 \end{equation}
 for some  $\Lambda,\Lambda_1,\Lambda_2,\ldots, \Lambda_k\in\sigma(A)$,
 and some positive integers $a_1,a_2,\ldots,a_k$ with $a_1+a_2+\ldots+a_k\ge 2$.

 \item  If $\Lambda\in \sigma(A)$ satisfies \eqref{reseq}, then we say $\Lambda$ is resonant.
 
 \item If $\sigma(A)$ has a resonance then we say it is resonant, otherwise nonresonant. 

 \item In case $A$ is the Stokes operator with the spectrum described in \eqref{Stkspec}, and $\Lambda=\Lambda_{k+1}$ for some $k\ge 1$, then \eqref{reseq} is equivalent to
\beqs
a_1\Lambda_1+a_2\Lambda_2+\ldots+a_k\Lambda_k=\Lambda_{k+1}, \quad\text{for some } a_1,a_2,\ldots,a_k\in \N\cup\{0\}.
\eeqs
 \end{enumerate}
 \end{definition}

 \medskip
We note that the periodic boundary conditions  on  $\lbrack 0,L\rbrack^n$ always lead to resonances because $\Lambda_2=2\Lambda_1$.
 On the other hand, for periodic boundary conditions on  general cubes $\lbrack 0,L_1\rbrack \times \lbrack 0,L_2\rbrack \times \lbrack 0,L_3\rbrack$, the  spectrum of the Stokes operator $A$ is non-resonant for a dense set of periods $(L_1,L_2,L_3)\in (0,\infty)^3$.
 
 It has been recently proven (\cite{CKL}) that in the case of Dirichlet boundary conditions, the spectrum of $A$ is non-resonant generically with respect to the domain. More precisely let $\D^3_l$ be the set of bounded domains in $\R^3$ with $C^l$ boundary equipped with a suitable topology. For any $\Omega \in \D^3_l$ we denote by $\D_l^3(\Omega)$ the Banach manifold obtained as the set 
 of images $(Id+u)(\Omega)$  by $u\in W^{l+1, \infty}(\Omega, \R^3)$ which are diffeomorphic to $\Omega.$
 
 The main result in \cite{CKL} is that generically with respect to $\Omega\in \D^3_5$ the spectrum of $A$ is non-resonant, in the sense that the set of domains in $\D_5^3(\Omega)$ for which  the non-resonance property holds contains an intersection of open and dense subsets of $\D_5^3(\Omega).$ This result is established as a consequence of the fact that generically with respect to $\Omega \in\D_4^3,$ the eigenvalues of $A$ are simple.

 \section{The  asymptotic expansion}
 \label{secexpansion}
 
In this section we obtain asymptotic expansions, as time tends to infinity, for regular solutions of NSE. These expansions are of the following type.

 \begin{definition}\label{expanddef}
Let  $X$ be a real vector space.

{\rm (a)} An $X$-valued polynomial is a function $t\in \R\mapsto \sum_{n=1}^d a_n t^n$,
for some $d\ge 0$, and $a_n$'s belonging to $X$.

{\rm (b)} When $(X,\|\cdot\|)$ is a normed space, a function $g(t)$ from $(0,\infty)$ to $X$ is said to have the asymptotic expansion
	\beqs
g(t) \sim \sum_{n=1}^\infty g_n(t)e^{-\alpha_n t} \text{ in } X,
\eeqs
where $(\alpha_n)_{n=1}^\infty$ is a strictly increasing sequence of positive numbers, $g_n(t)$'s are $X$-valued polynomials, if for all $N\geq1$, there exists $\varepsilon_N>0$ such that
\beqs
\Big\|g(t)- \sum_{n=1}^N g_n(t)e^{-nt}\Big\|=\mathcal O(e^{-(N+\varepsilon_N)t})\ \text{as }t\to\infty.
\eeqs
\end{definition}

Throughout, $A$ is the Stokes operator.

 \subsection{The non-resonant case}
     Assume $\sigma(A)$ is non-resonant.

 \begin{theorem}[Theorem 2 \cite{FS3}]\label{resexp}
 Let $u$ be a regular solution. For each $N\in \N$, one has the expansion in $H$: 
 $$u(t)=W_{\mu_1}e^{-\nu \mu_1 t}+W_{\mu_2}e^{-\nu \mu_2 t}+\ldots+W_{\mu_N}e^{-\nu \mu_N t}+v_N(t),\quad \forall t>0,$$
 where $W_{\mu_j}=W_{\mu_j}(u_0)\in \mathcal E^\infty(\Omega)\cap V$ for $j=1,\ldots,N$, and
\beq \label{vNsmooth}
v_N\in C([ 0,\infty;V)\cap L^2_{\rm loc}(0,\infty;D(A))\cap C^\infty([ t_0,\infty);\mathcal E^\infty(\Omega)\cap V),\quad \forall t_0>0.
\eeq
 Moreover,
 \begin{enumerate}[label={\rm (\roman*)}]
 \item $\|v_N(t)\|_m=O(e^{-\nu(\mu_N+\epsilon_N)t}),$ for some $\epsilon_N>0$, $m=0,1,2,\ldots$.
 \item $R_jW_{\Lambda_j}=W_{\Lambda_j}$ for $\Lambda_j\leq \mu_N.$
 \item For $ \mu_j=\alpha_1\Lambda_1+\ldots+\alpha_{j-1}\Lambda_{j-1}$, with $\alpha_1+\ldots+\alpha_{j-1}\geq 2,$
 $W_{\mu_j}$ is a polynomial of $W_{\Lambda_1},\ldots,W_{\Lambda_{j-1}}$ which is homogeneous of degree $\leq \alpha_1$ in $W_{\Lambda_1},$ of degree $\leq\alpha _2$ in $W_{\Lambda_2}$, $\ldots$, of degree $\leq \alpha_{j-1}$ in $W_{\Lambda_{j-1}}.$
More precisely, one has in this case
$$\nu(A-\mu_jI)W_{\mu_j}+\sum_{\mu_i+\mu_k=\mu_j}B(W_{\mu_i},W_{\mu_k})=0.$$
\end{enumerate}
 \end{theorem}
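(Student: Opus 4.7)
The natural approach is induction on $N$, extracting the $W_{\mu_j}$'s one at a time so that the partial sum $\bar u_N(t):=\sum_{j=1}^N W_{\mu_j}e^{-\nu\mu_j t}$ solves the NSE modulo an exponential error of order smaller than $e^{-\nu\mu_N t}$. Formally substituting this ansatz into \eqref{NS} and matching coefficients of each exponential $e^{-\nu\mu_j t}$ forces the recursion stated in (iii), namely
\[
\nu(A-\mu_j I)W_{\mu_j}=-\sum_{\mu_i+\mu_k=\mu_j}B(W_{\mu_i},W_{\mu_k}),
\]
the right-hand side involving only previously constructed $W_{\mu_i}$'s with $\mu_i<\mu_j$. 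This is the step at which non-resonance enters decisively: whenever $\mu_j\notin\sigma(A)$ the operator $A-\mu_j I$ is boundedly invertible, so $W_{\mu_j}$ is defined uniquely, lies in $\mathcal E^\infty(\Omega)\cap V$ by elliptic regularity of $A$, and depends polynomially on the $W_{\Lambda_i}$'s of the required degrees. For the remaining case $\mu_j=\Lambda_k\in\sigma(A)$, non-resonance guarantees that $\mu_j$ is \emph{not} representable as a nontrivial sum, so the right-hand side is empty and the recursion reduces to $(A-\Lambda_k)W_{\Lambda_k}=0$; hence $W_{\Lambda_k}\in R_{\Lambda_k}H$, which will give (ii), and the actual value of $W_{\Lambda_k}$ is pinned down by the dynamics.

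\textbf{Induction step.} The base case $N=1$ is essentially Theorem \ref{Dirichlet}(ii): set $W_{\Lambda_1}=\lim_{t\to\infty}e^{\nu\Lambda_1 t}R_{\Lambda_1}u(t)$; if the limit is nonzero then $\Lambda(u_0)=\Lambda_1$ and if it is zero then Theorem \ref{Dirichlet}(v) forces $\Lambda(u_0)\geq\Lambda_2$, yielding in either situation $\|u(t)-W_{\Lambda_1}e^{-\nu\Lambda_1 t}\|=O(e^{-\nu(\Lambda_1+\varepsilon_1)t})$. Assume inductively that $v_{N-1}=u-\bar u_{N-1}$ satisfies $\|v_{N-1}(t)\|_m=O(e^{-\nu(\mu_{N-1}+\delta)t})$ in every $\mathcal E^m$-norm, for some $\delta>0$. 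Write $v_{N-1}=W_{\mu_N}e^{-\nu\mu_N t}+v_N$; subtracting the recursive equation for $W_{\mu_N}$ from the equation for $v_{N-1}$ gives
\[
\partial_t v_N+\nu A v_N = -B(v_N,v_N) - B(\bar u_N,v_N) - B(v_N,\bar u_N) - G_N(t),
\]
where $G_N$ collects the leftover exponential terms. The cancellations enforced by the recursion ensure $\|G_N(t)\|_m=O(e^{-\nu\mu_{N+1} t})$, and the cross/nonlinear terms decay at least as $e^{-\nu(\mu_1+\mu_N+\delta)t}$. Applying variation of parameters with the Stokes semigroup $e^{-\nu tA}$ and splitting spectrally at $\mu_N$ (backward-integral from $+\infty$ on the portion $R_{\Lambda_j}$ with $\Lambda_j>\mu_N$, forward Duhamel integral on the portion with $\Lambda_j<\mu_N$, and eigenvalue-exactness on the resonant part, which requires non-resonance once more to keep $A-\mu_N I$ invertible when $\mu_N\notin\sigma(A)$) yields $\|v_N(t)\|=O(e^{-\nu(\mu_N+\varepsilon_N)t})$.

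\textbf{Main obstacle and regularity.} The chief technical difficulty is to upgrade this $H$-level decay to all norms $\|\cdot\|_m$ simultaneously, producing the claimed smoothness \eqref{vNsmooth} and conclusion (i). This is done by bootstrapping: the parabolic smoothing estimate $\|e^{-\nu sA}w\|_{m+2}\lesssim s^{-1}\|w\|_m$ converts the temporal decay into spatial regularity at a fixed later time, while the bilinear bounds on $B$ in Sobolev norms show that the forcing $G_N$ and the cross terms remain exponentially small in every $\mathcal E^m$. The combinatorial polynomial structure in (iii)—that $W_{\mu_j}$ is homogeneous of degree at most $\alpha_i$ in $W_{\Lambda_i}$ when $\mu_j=\sum\alpha_i\Lambda_i$—falls out of the recursion by induction, because each application of the bilinear $B$ splits $\mu_j$ as $\mu_i+\mu_k$ with strictly smaller $\mu_i,\mu_k$, and the degree counts add accordingly. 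The hardest point of the argument is reconciling the analytic construction of $W_{\Lambda_k}$ (as the limit $\lim_{t\to\infty} e^{\nu\Lambda_k t}R_{\Lambda_k}v_{k-1}(t)$ of a dynamically defined quantity) with the algebraic recursion, which is precisely what non-resonance makes consistent.
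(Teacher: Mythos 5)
Your overall strategy --- induction on $N$, the recursion $\nu(A-\mu_j)W_{\mu_j}=-\sum_{\mu_i+\mu_k=\mu_j}B(W_{\mu_i},W_{\mu_k})$ with $(A-\mu_j)^{-1}$ available off the spectrum and an empty right-hand side on the spectrum by non-resonance, the equation for the remainder $v_N$ with forcing $G_N=O(e^{-\nu\mu_{N+1}t})$, and the parabolic bootstrap to all $\|\cdot\|_m$ --- is essentially the route the paper takes (it only sketches this for the resonant case, Theorem \ref{AsRe}, of which your argument is the specialization: there the equation for $w_N=e^{\nu\mu_{N+1}t}v_N$ is projected by $R_{\Lambda_k}$ and reduced to scalar-type ODEs $w'+\alpha w+p=g$).

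There is, however, one step that fails as written: your spectral splitting is inverted. On the modes with $\Lambda_j>\mu_N$ the backward integral $-\int_t^\infty e^{-\nu\Lambda_j(t-s)}R_jF_N(s)\,ds$ diverges for all sufficiently large $\Lambda_j$, since the kernel grows like $e^{\nu\Lambda_j(s-t)}$ while the forcing only decays like $e^{-\nu(\mu_N+\delta)s}$; it is precisely on these high modes that the \emph{forward} Duhamel formula works, because the homogeneous part $e^{-\nu\Lambda_j(t-t_0)}R_jv_N(t_0)$ already decays faster than $e^{-\nu\mu_N t}$. Conversely, on the modes with $\Lambda_j<\mu_N$ the forward Duhamel integral only yields $O(e^{-\nu\Lambda_j t})$, which is slower than the target rate; there one must use the backward integral from $+\infty$ (which converges since $\Lambda_j<\mu_N+\delta$), justified by the a priori fact, inherited from the induction hypothesis, that $R_jv_N(t)=o(e^{-\nu\Lambda_j t})$ so the homogeneous contribution vanishes. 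This corresponds exactly to the paper's dichotomy $\alpha=\Lambda_k-\mu_{N+1}\ge 0$ versus $\alpha<0$ with $\lim_{t\to\infty}e^{\alpha t}w(t)=0$. With the two cases interchanged back, the rest of your argument, including the treatment of the exactly resonant mode $\Lambda_j=\mu_N$ and the bilinear/smoothing estimates for the higher norms, goes through as in the paper.
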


 
 \begin{remark}
It is clear that the case  $W_{\mu_1}=W_{\mu_2}=\ldots =W_{\mu_{j-1}}=0$ and $W_{\mu_j}\neq 0$ corresponds to $\mu_j=\Lambda(u_0).$
\end{remark}

The rather technical proof is by induction on N, see details in \cite{FS3}.

 \subsection{The resonant case}
    Assume $\sigma(A)$ is resonant.

 \begin{theorem}[Theorem 4 \cite{FS3}]\label{AsRe} 
 Let $u$ be a regular solution with initial data $u_0\in \mathcal R$. For any  $N\in \N$, one has the asymptotic expansion in  $H$:
 $$u(t)=W_{\mu_1}(t)e^{-\nu \mu_1 t}+W_{\mu_2}(t)e^{-\nu \mu_2 t}+\ldots+W_{\mu_N}(t)e^{-\nu \mu_N t}+v_N(t),\quad \forall t>0,$$
 where $W_{\mu_j}(t)=W_{\mu_j}(t;u_0)$ is a $ V\cap\; \mathcal E^\infty(\Omega)$-valued   polynomial   in $t$,  and 
 $v_N$ satisfies \eqref{vNsmooth}.
 Moreover,
 \begin{enumerate}[label={\rm (\roman*)}]
 \item $\|v_N(t)\|_m=O(e^{-\nu(\mu_N+\epsilon_N)t}),$ for some $\epsilon_N>0$, $m=0,1,2,\ldots$.
 \item $d_j^0={\rm deg}\;W_{\mu_j}\leq j-1,\quad j=1,\ldots,N.$
 \item If $\Lambda_j\leq \mu_N$ is a non-resonant eigenvalue, then $W_{\Lambda_j}$ is constant in  $t$ and $R_jW_{\Lambda_j}=W_{\Lambda_j}.$
 \item If $\mu_j\leq \mu_N$ is not a non-resonant eigenvalue, then $W_{\mu_j}(t)$ satisfies the  equation  
 $$\frac{dW_{\mu_j}(t)}{dt}+\nu(A-\mu_j)W_{\mu_j}(t)+\sum_{\mu_l+\mu_k=\mu_j}B(W_{\mu_l}(t),W_{\mu_k}(t))=0,\quad \forall t\in\R.$$
 

 \item If $\Lambda_j$ is  a resonant eigenvalue, one has
 $${\rm deg}\; W_{\Lambda_j}\leq \max_{\mu_l+\mu_k=\Lambda_j}(d_l^0+d_k^0)+1.$$
 Moreover, $R_kW_{\Lambda_j}(t)$, for $k\neq j$ and the   coefficients of order $\geq 1$ in $R_jW_{\Lambda_j}(t)$ are obtained from $R_1W_{\Lambda_1}(0),\ldots,R_{j-1}W_{\Lambda_{j-1}}(0),$ via successive integrations of explicit elementary functions.
 
 \item If $\mu_j\notin\sigma(A),$ $W_{\mu_j}(t)$  is obtained from  $R_1W_{\Lambda_1}(0),\ldots,R_{j-1}W_{\Lambda_{k_j}}(0),$ via successive  integrations of elementary explicit functions, where 
 $$\Lambda_{k_j}=\max\; \lbrace \Lambda\in\sigma(A); \Lambda<\mu_j \rbrace.$$
One has also ${\rm deg}\;W_{\mu_j}=d_j^0\leq \sup_{\mu_l+\mu_k=\mu_j}(d_l^0+d^0_k).$
  \end{enumerate}
 \end{theorem}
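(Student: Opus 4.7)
The proof is by induction on $N$. The base case $N=1$ follows from Theorem \ref{Dirichlet}(ii), the decay bound \eqref{udec}, and the estimate \eqref{iminusR}: one sets $W_{\mu_j}\equiv 0$ for $\mu_j<\Lambda(u_0)$, and at the level $\mu_1=\Lambda(u_0)$ takes $W_{\mu_1}=\lim_{t\to\infty}e^{\nu\Lambda(u_0)t}u(t)\in R_{\Lambda(u_0)}H$, so that $v_1(t)=u(t)-W_{\mu_1}e^{-\nu\mu_1 t}$ decays strictly faster than $e^{-\nu\mu_1 t}$.

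For the inductive step, suppose the expansion is established up to order $N$. Formally substituting the ansatz into \eqref{NS} and matching coefficients of $e^{-\nu\mu_j t}$ forces the relation
\beqs
\frac{dW_{\mu_j}}{dt}+\nu(A-\mu_j I)W_{\mu_j}+\sum_{\mu_l+\mu_k=\mu_j}B(W_{\mu_l},W_{\mu_k})=0,
\eeqs
which, for $j=N+1$, is a linear equation whose nonlinear forcing depends only on previously determined terms. Projecting onto each eigenspace $R_kH$ reduces this to a finite-dimensional ODE driven by a polynomial in $t$: on a resonant block $\Lambda_k=\mu_{N+1}$, direct integration produces a polynomial whose degree exceeds that of the forcing by one, yielding (iv)--(v); on any other block, variation of parameters gives a particular polynomial of the same degree; and when $\mu_{N+1}\notin\sigma(A)$ no resonant block appears, yielding (vi). Property (iii) follows because, for a non-resonant $\Lambda_j$, the quadratic sum is empty, so $W_{\Lambda_j}$ is constant and lies in $R_jH$. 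The degree bound (ii), $d_j^0\le j-1$, is obtained by a secondary induction along the sum rule $\mu_l+\mu_k=\mu_j$: non-resonant combinations preserve degree while a resonant step adds exactly one.

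The constants of integration on each spectral block are pinned by demanding that $v_{N+1}=v_N-W_{\mu_{N+1}}e^{-\nu\mu_{N+1} t}$ decay strictly faster than $e^{-\nu\mu_{N+1} t}$. On eigenspaces with $\Lambda_k<\mu_{N+1}$ the homogeneous contribution must vanish to avoid a slower exponential; on the resonant block $\Lambda_k=\mu_{N+1}$ the constant is determined by $R_k[e^{\nu\mu_{N+1} t}(u(t)-\sum_{j\le N}W_{\mu_j}(t)e^{-\nu\mu_j t})]$ via a Duhamel-type formula whose limit exists by the inductive decay of $v_N$; on eigenspaces with $\Lambda_k>\mu_{N+1}$ the resulting faster-decaying homogeneous term is absorbed into $v_{N+1}$. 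The residual then satisfies a perturbed NSE whose forcing consists of terms $B(W_{\mu_l},W_{\mu_k})$ with $\mu_l+\mu_k>\mu_{N+1}$ and is $O(e^{-\nu(\mu_{N+1}+\epsilon)t})$; a standard energy/Gronwall argument, combined with the parabolic smoothing of $e^{-tA}$, propagates this to every Sobolev norm, delivering (i) and \eqref{vNsmooth}.

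The main obstacle I anticipate is the combinatorial bookkeeping producing the sharp degree bounds (ii) and (v): one must verify that through nested resonant chains $\mu_{l_1}+\mu_{l_2}=\mu_j$, $\mu_{l_3}+\mu_{l_4}=\mu_{l_1}$, etc., the polynomial degrees grow only in the controlled manner claimed, rather than accumulating unexpectedly. A secondary technical point is upgrading the pointwise $H$-limits defining the $W_{\mu_j}$'s to $\mathcal E^\infty(\Omega)\cap V$-limits; this relies on parabolic smoothing for NSE and on the fact that each spectral projection $R_k$ maps into $\mathcal E^\infty(\Omega)$.
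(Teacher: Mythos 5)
Your proposal is correct and follows essentially the same route as the paper: induction on $N$, passing to the rescaled residual, projecting onto the spectral blocks of $A$, and approximating each resulting scalar ODE $w'+\alpha w+p(t)=O(e^{-\delta t})$ by a polynomial solution, with the sign of $\alpha=\Lambda_k-\mu_{N+1}$ dictating how the integration constant is fixed exactly as you describe. The technical obstacles you flag (degree bookkeeping through resonant chains, and upgrading to $\mathcal E^\infty(\Omega)\cap V$ via estimates on $\|d^{(j)}u/dt^{j}\|_m$) are precisely the ones the paper defers to \cite{FS3}.
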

 \begin{proof}
  The proof is also technical and   by induction on N. We merely sketch  the main steps.
  
  $\bullet$ First step. We recall that
  \beqs
  \|u(t)\|=O(e^{-\nu\Lambda_1 t}).
  \eeqs
 Ones can prove the limit in $V$:
  \beqs
   \lim_{t\to \infty}e^{\nu\Lambda_1 t} R_{\Lambda_1}u(t)=\xi_1\in R_{\Lambda_1}H,
  \eeqs
and then establish
  \beqs
   e^{\nu\Lambda_1 t} \|(I-R_{\Lambda_1})u(t)\|_m=O(e^{-\delta t})\text{ for some }\delta>0,\quad\forall m\ge0 . 
  \eeqs
  
 $\bullet$  Induction step. Let $v_N(t)=u(t)-\sum_{j=1}^N W_{\mu_j}(t)e^{-\nu\mu_j t}$.
  Assume
  \beqs
  \|v_N(t)\|_m=O(e^{-\delta t})\text{ for some }\delta>0,\quad\forall m\ge0 .
  \eeqs

  Write the equation for $w_N=e^{\nu\mu_{N+1}t} v_N(t)$ as
  \beqs
  \frac{d w_N}{dt}+ (A-\mu_{N+1})w_N + \sum_{\mu_\ell+\mu_j=\mu_{N+1}}B(W_{\mu_\ell}(t),W_{\mu_j}(t))=h_N(t),
  \eeqs
  where
  \beqs
    \|h_N(t)\|_m=O(e^{-\delta t})\text{ for some }\delta>0,\quad\forall m\ge0 .
  \eeqs

  We apply the projector  $R_{\lambda_k}$ and obtain the equation for $w_{N,k}=R_{\Lambda_k}w_N$:
  \beq\label{wNk}
  \frac{d w_{N,k}}{dt}+ (\Lambda_k-\mu_{N+1})w_{N,k} + p_{N,k}(t)=R_{\Lambda_k}h_N(t),
  \eeq
where $p_{N,k}$ is a polynomial in $t$.

This equation is an ODE of the type:
  \beqs
  \frac{d w}{dt}+ \alpha w + p(t)=g(t)=O(e^{-\delta t})\text{ in }R_{\Lambda_k}H,
  \eeqs
  where $p(t)$ is a polynomial. When either $\alpha\ge 0$, or $\alpha<0$ with $\lim_{t\to\infty}(e^{\alpha t} w(t))=0$, there exists a polynomial solution $q(t)$ of
  \beqs
  \frac{d q}{dt}+ \alpha q + p(t)=0
  \eeqs
such that
\beqs
|w(t)-q(t)|=O(e^{-\delta' t})\quad\text{for some }\delta'\in(0,\delta).
\eeqs

Using this fact we approximate $w_{N,k}$ in \eqref{wNk} by a polynomial $q_{N+1,k}\in R_{\Lambda_k}H$. We then define
$W_{\mu_{N+1}}(t)=\sum_{k=1}^\infty q_{N+1,k}(t).$
The function  $W_{\mu_{N+1}}(t)$ is proved to be a polynomial and  satisfies
\beqs
\|w_N(t)-W_{\mu_{N+1}}(t)\|_m=O(e^{-\varepsilon t})\quad\text{for some }\varepsilon>0,\quad\forall m\ge0 .
\eeqs
This implies
\beqs
\|v_N(t)-W_{\mu_{N+1}}(t) e^{-\mu_{N+1}t}\|_m=O(e^{-(\nu \mu_{N+1}+\varepsilon) t}),\quad\forall m\ge1 ,
\eeqs
which proves the induction step.

$\bullet$ Note that, in dealing with the higher norms $\|\cdot\|_{m}$, the proof in \cite{FS3} estimates  $\|d^{(j)}u/dt^{j}\|_{m}$ for all $j\ge 1$.
\end{proof}

\begin{remark}
The first coefficient $W_{\mu_j}(t)$ which is not identically zero in the expansion corresponds to $\mu_j=\Lambda(u_0).$ In this case it is constant in $t$ and belongs to $R_{\Lambda(u_0)}H.$
  \end{remark}

  \textbf{Notation.} 
  Based on Theorems \ref{resexp} and \ref{AsRe}, and according to Definition \ref{expanddef}, we have 
  \beqs 
  u(t)\sim \sum_{j=1}^\infty W_{\mu_j}(t)e^{-\nu\mu_j t} \quad \text{in } H^m(\Omega)^n,\quad \forall m\in \N,
  \eeqs  
which we will simply write 
  \beq\label{expnot}
  u(t)\sim \sum_{j=1}^\infty W_{\mu_j}(t)e^{-\nu\mu_j t}. 
  \eeq

   \subsection{The asymptotic expansion in Gevrey spaces}
   Theorem \ref{AsRe} has been  recently improved in  \cite{HM1} where it is proved  in particular that the asymptotic expansion in the 3D-periodic case actually holds in all Gevrey classes.

Consider the periodic case \eqref{peridom} with $n=3$ and assume \eqref{scalength}. Recall that one has the properties \eqref{scalesig}.
We first describe the relevant Gevrey classes.

For $\alpha, \sigma \in \R,$ and $u=\sum_{{\bf k}\in\Z^3\setminus\{0\}} \hat {u}({\bf k})e^{i{\bf k}\cdot x},$ we define

$$A^\alpha u=\sum_{{\bf k}\in\Z^3\setminus\{0\}} |{\bf k}|^{2\alpha} \hat{u}({\bf k})e^{i{\bf k}\cdot x},
$$
and the Gevrey class
$$G_{\alpha,\sigma}=D(A^\alpha e^{\sigma A^{1/2}})=\lbrace u\in H;|u|_{\alpha, \sigma} \eqdef |A^\alpha e^{\sigma A^{1/2}}u|<\infty\rbrace,$$
so that the domain of $A^\alpha$ is $D(A^\alpha)=G_{\alpha,0}$. Also $D(A^0)=H,$ $D(A^{1/2})=V.$

   The next theorem improves Theorem \ref{AsRe} for the periodic case for any weak solution.

\begin{theorem}[Theorem 1.1 \cite{HM1}]\label{Gevrey}
The expansion in  Theorem \ref{AsRe} holds in any Gevrey class $G_{\alpha,\sigma}$ with $\alpha, \sigma >0.$ More precisely for any (Leray-Hopf) weak solution $u$ of the NSE, there exist polynomials $q_n(t)$'s in $t$ valued in $\mathcal V$ such that if $\alpha, \sigma >0$ and $N\geq 1$ then
\begin{equation}\label{AsGev}
|u(t)-\sum_{n=1}^Nq_n(t)e^{-nt}|_{\alpha,\sigma}=O(e^{-(N+\epsilon)t})\quad\text{as }t\to \infty,\quad \text{for any}\;\epsilon\in (0,1).
\end{equation}
\end{theorem}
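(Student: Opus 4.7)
The plan is to identify the coefficients $q_n(t)$ with the polynomials $W_{\mu_n}(t)$ supplied by Theorem \ref{AsRe}, to show that in the 3D periodic setting each $W_{\mu_n}(t)$ is a finite trigonometric polynomial in $x$ (hence automatically in every Gevrey class), and finally to upgrade the Sobolev decay of the remainder $v_N(t)$ to a Gevrey decay. Since every Leray--Hopf weak solution becomes regular on $[T_0,\infty)$ for some $T_0$ (property (ii) of subsection \ref{basic}), it suffices to prove \eqref{AsGev} for $t$ large.

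\emph{Trigonometric-polynomial structure of the $q_n$.} In the periodic case with $\Lambda_1=1$ and $\sigma(A)\subset\N$, each eigenspace $R_nH$ is finite-dimensional, spanned by the Fourier modes $e^{i\veck\cdot x}$ with $|\veck|^2=n$. By Theorem \ref{Dirichlet}(ii), $W_{\Lambda_1}\in R_1 H\subset \mathcal V$ has Fourier support in $\{\veck\in\Z^3:|\veck|^2=1\}$. The recursion
\beqs
\frac{dW_{\mu_j}}{dt}+(A-\mu_j)W_{\mu_j}+\sum_{\mu_l+\mu_k=\mu_j}B(W_{\mu_l},W_{\mu_k})=0
\eeqs
is an ODE on the finite-dimensional space $R_jH$. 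Since $B$ maps a pair of trigonometric polynomials with Fourier supports $K_1, K_2$ into a trigonometric polynomial with Fourier support in $K_1+K_2$, induction on $j$ shows $W_{\mu_j}(t)\in \mathcal V$ has Fourier support in a finite set depending only on $j$. Consequently $|q_n(t)|_{\alpha,\sigma}$ grows at most polynomially in $t$, for every $\alpha,\sigma>0$.

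\emph{Gevrey decay of the remainder.} With $r_N(t)=\sum_{n=1}^N q_n(t)e^{-nt}$, the defect $v_N(t)=u(t)-r_N(t)$ satisfies
\beqs
\partial_t v_N + A v_N + 2 B(r_N,v_N) + B(v_N,v_N) = h_N(t),
\eeqs
where $h_N(t)$ is a finite combination of cross-terms with uniformly bounded Fourier support that decays in every Gevrey norm like $e^{-(N+1)t}$ (this is precisely what the recursion above is designed to enforce). I would rescale by $w_N(t)=e^{(N+\varepsilon)t}v_N(t)$ for $\varepsilon\in(0,1)$ and carry out a Gevrey energy estimate by pairing the rescaled equation with $A^{2\alpha}e^{2\sigma A^{1/2}}w_N$. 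The classical Foias--Temam analyticity propagation for the 3D periodic NSE gives $|u(t)|_{\alpha,\sigma'}<\infty$ for every $\alpha,\sigma'>0$ at large $t$, so $|v_N(t)|_{\alpha,\sigma}$ is a priori bounded. Using the $H^m$-decay of $v_N$ from Theorem \ref{AsRe} to initialize the estimate and applying Gronwall's inequality then propagates the factor $e^{-(N+\varepsilon)t}$ into the Gevrey norm.

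\emph{Main obstacle.} The delicate point is controlling $B(v_N,v_N)$ and the mixed term $B(r_N,v_N)$ in the Gevrey norm, since $e^{\sigma A^{1/2}}$ does not commute with pointwise multiplication and the naive estimate loses a full derivative. The standard remedy is a Gevrey algebra/commutator inequality of the form
\beqs
|\inprod{B(u,w)}{A^{2\alpha}e^{2\sigma A^{1/2}}w}| \lesssim |u|_{\alpha+1/2,\sigma}|w|_{\alpha+1/2,\sigma}^2,
\eeqs
whose half-derivative loss is absorbed by the dissipation $|w_N|_{\alpha+1/2,\sigma}^2$ appearing on the left side of the energy identity. The uniformly bounded Fourier support of $r_N$ established above is essential: it makes $B(r_N,\cdot)$ tame, so that the cross-term does not disturb the Gevrey weight. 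Closing the Gronwall loop uniformly in $\varepsilon\in(0,1)$ and ensuring that the constants do not accumulate as $N$ grows is where the bulk of the technical work lies.
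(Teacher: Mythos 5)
Your overall strategy --- identify the $q_n$ with the $W_{\mu_n}$, observe that in the periodic case they are finite trigonometric polynomials (hence trivially in every Gevrey class), and then run the remainder estimate directly in the Gevrey norm using a Foias--Temam type bilinear inequality --- is the right one, and it matches the only methodological content the survey gives for this theorem, namely that the proof in the cited reference works with Gevrey norms so as to avoid the estimates of $\|d^{(j)}u/dt^j\|_m$ for all $j,m$. (Note, though, that your route does not actually avoid those estimates: you import Theorem \ref{AsRe} as a black box, and its proof is exactly where they occur; the cited proof instead redoes the whole induction self-containedly in Gevrey classes.) The derivation of the defect equation is essentially correct once you replace $2B(r_N,v_N)$ by $B(r_N,v_N)+B(v_N,r_N)$, since $B$ is not symmetric, and the source term $h_N=-\sum_{k,l\le N,\,k+l>N}e^{-(k+l)t}B(q_k,q_l)$ indeed has finite Fourier support and decays like $e^{-(N+1)t}$ up to a polynomial factor.

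The genuine gap is in the sentence ``applying Gronwall's inequality then propagates the factor $e^{-(N+\varepsilon)t}$ into the Gevrey norm.'' After the rescaling $w_N=e^{(N+\varepsilon)t}v_N$, the linear part of the energy identity contributes $|w_N|_{\alpha+1/2,\sigma}^2-(N+\varepsilon)|w_N|_{\alpha,\sigma}^2$, which is \emph{negative} on the Fourier modes with $|\veck|^2\le N$; a forward Gronwall argument therefore permits $|w_N|_{\alpha,\sigma}$ to grow like $e^{(N+\varepsilon-1)t}$ and proves nothing. This non-coercivity of $A-(N+\varepsilon)I$ on the low modes is precisely why the induction in Theorem \ref{AsRe} treats the eigenspaces with $\Lambda_k<\mu_{N+1}$, $=\mu_{N+1}$, $>\mu_{N+1}$ separately (integrating backward from $t=\infty$ on the low modes to select the polynomial part). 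Your plan can be repaired: split $w_N$ into its projection onto the finitely many modes with $|\veck|^2\le N+1$ --- where the Gevrey norm is equivalent to the $L^2$ norm and the decay $O(e^{-(N+\varepsilon)t})$ is already supplied by the $H^0$ estimate of Theorem \ref{AsRe} --- and its high-mode part, where $A-(N+\varepsilon)I\ge(1-\varepsilon)I$ and a genuine dissipative Gronwall closes, with the bilinear coupling between the two pieces absorbed by the residual dissipation. As written, however, the step fails, and this low-mode issue (not the bilinear commutator estimate, which is standard) is the actual crux of upgrading Sobolev decay to Gevrey decay.
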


By working with the Gevrey norms, the proof in \cite{HM1} can avoid the estimates of $\|d^{(j)}u/dt^{j}\|_{m}$ for all $j\ge 0$ and $m\ge 0$.

\medskip
Before moving to the normal form theory for the NSE, we review other applications of the Dirichlet quotients techniques and asymptotic expansions.

\subsection{Application: asymptotic behavior of the helicity}\label{helisub}
It turns out that the techniques developed to study the Dirichlet quotients can be used to obtain information on the asymptotic behavior of the helicity for Navier-Stokes equations with potential forces. This is the object of the papers \cite{FHN1, FHN2}. We will focus on the results of \cite{FHN1}  where the (3D, periodic) deterministic case is considered. (Interested readers can read  \cite{FHN2} which deals with the statistical case). 

 
 For a regular solution $u$ of the NSE, the helicity is a scalar quantity defined by
 $$\mathcal H(t)=\int_{\Omega} u(x,t)\cdot\vecvor(x,t) dx,\quad \text{where }\vecvor= \text{curl}\; u.$$

   In the inviscid case ($\nu =0$) the invariance of the helicity was noticed by Moreau \cite{Mor}. The first thorough study of the helicity and of its density for inviscid incompressible flows was carried out by Moffatt \cite{Mof}, who gave in particular a connection of the helicity to the topological invariants and dynamics of the vortex tubes as well as the first examples of physically relevant fluid flows with non-zero helicity. There is
a general agreement that helicity plays an important role in magneto-hydro dynamics, but not
in the dynamics of ‘neutral’ flows (that is, solutions of the Euler or the Navier-Stokes equations). However, theoretical, empirical and numerical evidence indicate that the helicity can
provide insights into the nature of the fluid flows, at least in the case when the viscosity is small and this motivates the present study.     
   
 In the periodic case with our choice  of $\Omega$ in \eqref{peridom} ($n=3$), we recall from \eqref{periLamb} that the first eigenvalue of the Stokes operator $A$ is $\Lambda_1=4\pi^2/L^2,$ and the other ones are among $n\Lambda_1$, with $n\in \N.$
    The previous results on the limit of the Dirichlet quotients together with Cauchy-Schwarz inequality imply that the helicity of a regular solution tends to  zero as $t\to\infty$, at least with a rate  $2\nu \Lambda_1 n_0,$ where  $n_0$ depends on the  initial data.
    However, due to possible changes of sign and cancellations in $u\cdot \vecvor$, this does not imply that it has the same decay rate   $2\nu \Lambda_1 n_0,$  as that of the energy.
In particular, it was not known whether the  helicity could change sign or vanish infinitely times as   $t\to\infty.$
    An answer to those questions is found in \cite{FHN1}.

     We further define related quantities
    \begin{align*}
     \mathcal E (t)&=\frac{1}{2}\int_\Omega |u(x,t)|^2dx\quad \text{(kinetic energy)},\\
\mathcal F (t)&=\int_\Omega |\vecvor (x,t)|^2 dx\quad \text{(rate of energy  dissipation/viscosity)}\\
\mathcal I(t)&=\int_\Omega \vecvor (x,t)\cdot(\nabla \times \vecvor(x,t))dx.
    \end{align*}

These entities satisfy the following (balance) equations:
     $$\frac{d}{dt} \mathcal E (t)+\nu \mathcal F(t)=0,$$
      $$\frac12\frac{d}{dt}\mathcal H(t)+\nu \mathcal I(t)=0.$$  
     
We now assume \eqref{scalength}, and recall  that \eqref{scalesig} holds true.
     
     \begin{theorem}[Theorem 3.1 \cite{FHN1}] \label{FHN1}
For any regular solution of the NSE,
   \begin{enumerate}[label={\rm (\roman*)}]
   \item Either  the helicity becomes non-zero and  decays as  $t^de^{-2h_0 t},$ where $d\geq 0$ and $h_0$ are integers depending on $u_0$ with 
   $$\lim_{t\to \infty} \frac{\mathcal I(t)}{\mathcal H(t)}= h_0,$$
   \item Or it is identically zero.
   \end{enumerate}
   \end{theorem}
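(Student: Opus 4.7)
The plan is to feed the asymptotic expansion of $u(t)$ from Theorem \ref{Gevrey} into $\mathcal{H}(t)=\langle u,\mathrm{curl}\,u\rangle$, identify the leading non-vanishing block, and use the balance law $\tfrac12\mathcal{H}'(t)+\nu\mathcal{I}(t)=0$ to read off $h_0$. Under the scaling \eqref{scalength}--\eqref{scalesig}, $\nu=1$ and $\sigma(A)\subset\N$; write $u(t)\sim\sum_{n\geq 1}q_n(t)e^{-nt}$ in Gevrey norms with each $q_n(t)$ a $\mathcal V$-valued polynomial in $t$. Since $\mathrm{curl}$ is bounded on every Gevrey class and commutes with $A$, termwise application yields $\omega(t)=\mathrm{curl}\,u(t)\sim\sum_{n\geq 1}(\mathrm{curl}\,q_n(t))e^{-nt}$ in the same topology, so that
\[
\mathcal{H}(t)\sim\sum_{N\geq 2}P_N(t)e^{-Nt},\qquad P_N(t)=\sum_{n+m=N}\langle q_n(t),\mathrm{curl}\,q_m(t)\rangle,
\]
with each $P_N(t)$ a polynomial in $t$.

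A key structural reduction exploits that $\mathrm{curl}$ commutes with every spectral projector $R_\Lambda$ of $A$ while the eigenspaces $R_\Lambda V$ are mutually $L^2$-orthogonal, giving
\[
\langle q_n,\mathrm{curl}\,q_m\rangle=\sum_{\Lambda}\langle R_\Lambda q_n,\mathrm{curl}\,R_\Lambda q_m\rangle\quad\text{and}\quad\mathcal{H}(t)=\sum_{\Lambda\in\sigma(A)}\langle R_\Lambda u(t),\mathrm{curl}\,R_\Lambda u(t)\rangle.
\]
Moreover $R_\Lambda q_n=0$ for $n<\Lambda$, since $R_\Lambda u(t)$ decays no slower than $e^{-\Lambda t}$, so the $\Lambda$-block begins at the \emph{even} exponent $e^{-2\Lambda t}$ with leading coefficient $\langle R_\Lambda q_\Lambda,\mathrm{curl}\,R_\Lambda q_\Lambda\rangle$; by Theorem \ref{Dirichlet} the globally dominant block corresponds to $\Lambda_0=\Lambda(u_0)\in\sigma(A)\subset\N$. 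Once the asymptotic $\mathcal{H}(t)\sim c\,t^d e^{-2h_0 t}$ with $c\neq 0$ is established, differentiation gives $\mathcal{H}'(t)\sim -2h_0 c\,t^d e^{-2h_0 t}$ to leading order (the derivative of the polynomial factor being lower order), so the balance law yields $\mathcal{I}(t)=-\tfrac{1}{2\nu}\mathcal{H}'(t)\sim h_0 c\,t^d e^{-2h_0 t}$, and hence $\mathcal{I}/\mathcal{H}\to h_0$.

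The main obstacle is closing the dichotomy, ensuring that whenever the helicity is not identically zero the first non-vanishing block has the stated form $t^d e^{-2h_0 t}$ with $h_0\in\N$. A priori odd-exponent cross contributions $\langle R_\Lambda q_n,\mathrm{curl}\,R_\Lambda q_m\rangle e^{-(n+m)t}$ with $n\neq m$ might appear in each $\Lambda$-block and spoil this form. I would attack this rigidity via the Poincar\'e--Dulac normalization developed in Sections \ref{secnormal}--\ref{secPD}: in normalized coordinates the dynamics within each eigenspace decouples so that $q_n\in R_n V$ for $n\in\sigma(A)$ and $q_n=0$ otherwise, whence $\langle q_n,\mathrm{curl}\,q_m\rangle=0$ for $n\neq m$ by eigenspace orthogonality, and the formal helicity expansion lives solely on the even lattice $\{2\Lambda:\Lambda\in\sigma(A)\}$. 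Since $u$ and its normalization share the same formal asymptotic series, the putative odd-exponent contributions in the original expansion cancel identically. The ``all formal coefficients vanish'' alternative must then be upgraded to $\mathcal{H}\equiv 0$; I would do this by invoking the real-analyticity of $u(t)$ for $t>0$ together with the Poincar\'e--Dulac reconstruction of $u$ from its normalized form on $E^\infty$, so that the helicity -- regarded as a polynomial functional on the normalized solution -- is determined by its formal series and therefore must vanish identically.
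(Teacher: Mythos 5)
Your first step---substituting the expansion $u(t)\sim\sum_n q_n(t)e^{-nt}$ into $\mathcal H=\inprod{u}{{\rm curl}\,u}$ to get $\mathcal H(t)\sim\sum_N P_N(t)e^{-Nt}$---is exactly how the paper begins. But the structural reduction you build on top of it is wrong. The claim that $R_\Lambda q_n=0$ for $n<\Lambda$ (equivalently, that $R_\Lambda u(t)$ decays at least like $e^{-\Lambda t}$) fails: the nonlinearity feeds high eigenspaces at low exponential rates. Already $q_2(0,\xi)=\xi_2-(A-2)^{-1}(I-R_2)B(\xi_1,\xi_1)$ generically has nonzero components in every $R_\Lambda H$, so for $\Lambda(u_0)=\Lambda_1$ one typically has $R_\Lambda u(t)\asymp e^{-2t}$ for large $\Lambda$, not $e^{-\Lambda t}$. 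If your claim were true, each $\Lambda$-block would start at $e^{-2\Lambda t}$ and $h_0$ would always be an eigenvalue of $A$ --- contradicting Theorem \ref{FHN4} of the very paper you are proving, which exhibits solutions with $h_0\notin\sigma(A)$. The appeal to the normal form does not rescue this: in normalized coordinates the polynomials $q_n$ are the \emph{same} objects, with $(I-R_n)q_n\neq0$, and the helicity is a functional of $u$, not of $\xi=W(u)$, so no cancellation of odd-exponent cross terms follows from "sharing the same formal series." (Whether and why the leading exponent is even is a genuine point that needs the finer spectral analysis of the curl operator carried out in \cite{FHN1}; it is not free.)

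The second, and more serious, gap is the dichotomy itself, i.e.\ case (ii). Real-analyticity of $t\mapsto\mathcal H(t)$ on $(0,\infty)$ together with the vanishing of all coefficients of the asymptotic expansion does \emph{not} imply $\mathcal H\equiv0$: the function $e^{-e^{t}}$ is real-analytic on $(0,\infty)$ and has identically zero expansion of the type $\sum\phi_j(t)e^{-jt}$. Nor can you say $u$ is "determined by its formal series" via the Poincar\'e--Dulac reconstruction --- that reconstruction is only a formal power series whose convergence is explicitly listed as an open problem in Section \ref{secFinal}. The paper's actual mechanism is analytic continuation in \emph{complex time}: one shows $\mathcal H(\zeta)$ is analytic and bounded on a domain $t_0+D$ with $D=\{\tau+i\sigma:\tau>0,\ |\sigma|<\sqrt2\,\tau e^{\alpha\tau}\}$ widening exponentially, conformally maps $D$ onto a set containing the right half-plane, and then applies a Phragmen--Lindel\"of estimate to a bounded analytic function on the half-plane that decays faster than every $e^{-\beta\eta}$ along the positive real axis, forcing it to vanish. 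That quantitative complex-analytic input is the idea your proposal is missing, and without it the alternative "all formal coefficients vanish" cannot be upgraded to $\mathcal H\equiv0$.
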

\begin{proof}
   Using asymptotic expansion of $u$, we derive for the helicity that
   \beqs
   \mathcal H(t)\sim \sum_{j=1}^\infty \phi_j(t)e^{-jt},
   \eeqs
   where the $\phi_j$'s are polynomials in $t\in\R$.
   
   If one of $\phi_j$'s is not a zero polynomial, then we obtain case (i).
   Otherwise, $\mathcal H(t)$ decays to zero, as $t\to\infty$, faster than any exponential functions. This fact itself cannot yield conclusion for case (ii).
   More properties of the solution $u$ and helicity $\mathcal H$ are needed. For those, we complexify the NSE in time and denote the resulting solution and helicity by $u(\zeta)$ and $\mathcal H(\zeta)$, for the complex time $\zeta\in \mathbb C$.
   These functions are proved to be analytic and bounded in a domain $E$ which, see \cite[Propositions 8.3 and 8.4]{FHN1}, contains $(0,\infty)$ and an open set 
\beq\label{Dto} t_0+D,\eeq 
where $t_0$ is a certain positive time, and  
\beq\label{D} D=\{\tau+i\sigma\in \mathbb C:\tau>0,|\sigma|<\sqrt 2 \tau e^{\alpha\tau}\}\eeq
for some positive constant $\alpha$. 
Then, see \cite[Lemma B.2]{FHOZ1},  the transformation  
\[\varphi(\zeta)=\zeta-\frac{1}{\alpha}\log (1+\alpha\zeta)\]
conformally maps $D$ to a set containing the right-half plane $H_0$. Moreover, $$\varphi([0,\infty))=[0,\infty).$$

Define the function $\mathcal H_{t_0}(\zeta)=\mathcal H(t_0+\zeta)$.
We have $\mathcal H_{t_0}\circ \varphi^{-1}$ is analytic, bounded on $H_0$ and, see \cite[Lemma 8.5]{FHN1}, satisfies
    \beqs
   \limsup_{0<\eta\to\infty} e^{\beta \eta}|(\mathcal H_{t_0}\circ \varphi^{-1})(\eta)| \leq
\limsup_{0<\zeta\to\infty} e^{\beta \zeta}|\mathcal H_{t_0}(\zeta)|=0\quad \forall\beta>0.
\eeqs

Then applying Phragmen-Linderloff type estimates, see \cite[Proposition C.1]{FHN1}, we infer that  $(\mathcal H_{t_0} \circ \varphi^{-1})(\eta)= 0$ for all $\eta\in H_0$. This implies $\mathcal H(\zeta)=0$ on the  open, non-empty set $D^*=t_0+\varphi^{-1}(H_0)$, which, by analyticity, yields that $\mathcal H(t)= 0$ for all  $t\in(0,\infty)$.
\end{proof}
   
The next theorem shows that the case where the helicity is non-zero is generic.
     
    \begin{theorem}[Theorem 3.2 \cite{FHN1}] \label{FHN2}
   Let  $\mathcal R_1$ and $\mathcal R_0$ be the sets of initial data  $u_0\in \mathcal R$ corresponding  to cases (i) and (ii) in Theorem \ref{FHN1}, respectively.   
   Then  $\mathcal R_1$ is open and dense in  $\mathcal R$   while  $\mathcal R _0$ is closed and contains an infinite union of linear, closed infinite dimensional manifolds.
   \end{theorem}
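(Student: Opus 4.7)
The plan is to verify four assertions in turn: (a) $\solnset_1$ is open in $\solnset$, (b) $\solnset_1$ is dense in $\solnset$, (c) $\solnset_0$ is closed (immediate from (a)), and (d) $\solnset_0$ contains an infinite union of closed infinite-dimensional linear submanifolds. The key input is the dichotomy of Theorem \ref{FHN1}: $u_0\in\solnset_0$ iff $\mathcal H(t;u_0)=0$ for every $t\ge 0$, and $u_0\in\solnset_1$ iff $\mathcal H(t_0;u_0)\ne 0$ for some $t_0\ge 0$.

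For (a), fix $u_0\in\solnset_1$ and some $t_0\ge 0$ with $\mathcal H(t_0;u_0)\ne 0$. Standard local well-posedness for the NSE in $V$ makes the flow map $u_0\mapsto u(t_0;u_0)$ continuous from a neighborhood of $u_0$ in $\solnset$ into $V$, so the scalar
\[
u_0\mapsto \mathcal H(t_0;u_0)=\inprod{u(t_0;u_0)}{\text{curl}\,u(t_0;u_0)}
\]
is continuous at $u_0$ and stays nonzero in a sufficiently small neighborhood. Thus $\solnset_1$ is open, and $\solnset_0=\solnset\setminus\solnset_1$ is closed.

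For (b), fix once and for all a $v\in V$ with $\mathcal H(0;v)\ne 0$; a Beltrami field such as the $ABC$ flow with $A=B=C=1$ satisfies $\text{curl}\,v=v$, hence $\mathcal H(0;v)=|v|^2>0$. Given $u_0\in\solnset$, the scalar
\[
P(\epsilon)\eqdef\int_\Omega(u_0+\epsilon v)\cdot\text{curl}\,(u_0+\epsilon v)\,dx=\mathcal H(0;u_0)+2\epsilon\inprod{u_0}{\text{curl}\,v}+\epsilon^2\mathcal H(0;v)
\]
is a polynomial in $\epsilon$ whose leading coefficient $\mathcal H(0;v)$ is nonzero, so $P$ has at most two real roots. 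Since $\solnset$ is open, $u_0+\epsilon v\in\solnset$ for all small $\epsilon$; any sufficiently small $\epsilon\ne 0$ avoiding the two roots of $P$ then produces $u_0+\epsilon v\in\solnset_1$, proving that $\solnset_1$ is dense in $\solnset$.

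For (d), I would appeal to the directional manifolds $\directmanifold$ from Remark \ref{Maper}. For each $\veca\in\R^3\setminus\{0\}$ with $\orthplane=\{\veck\in\Z^3:\veck\cdot\veca=0\}\ne\{0\}$, $\directmanifold$ is a closed infinite-dimensional linear subspace of $V$, lies in $\solnset$, and is invariant with $u(t)=e^{-tA}u_0\in\directmanifold$ for all $t\ge 0$. Writing $u(x,t)=\sum_{\veck\in\orthplane}c_\veck(t)\veca\,e^{i\veck\cdot x}$, a direct Parseval computation yields
\[
\mathcal H(t)=\int_\Omega u\cdot\text{curl}\,u\,dx=-i(2\pi)^3\sum_{\veck\in\orthplane}c_\veck(t)c_{-\veck}(t)\,\veca\cdot(\veck\times\veca)=0,
\]
because $\veck\times\veca\perp\veca$. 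Hence $\directmanifold\subset\solnset_0$, and choosing countably many mutually non-proportional admissible directions $\veca$ (e.g., coordinate axes and simple lattice sums) produces the required infinite family inside $\solnset_0$. The only mildly technical ingredient is the continuity in step (a), which is routine for regular solutions; the remaining steps reduce to a polynomial identity in $\epsilon$ and a one-line Fourier calculation, so I do not expect a serious obstacle.
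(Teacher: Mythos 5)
Your proof is correct and realizes essentially the same strategy the paper indicates: the survey only sketches the argument by pointing to the spectrum of the curl operator, a global stability/continuous-dependence result for 3D NSE, and the family $\directmanifold$ of Remark \ref{Maper}, and your three steps (perturbation by a Beltrami field, i.e.\ a curl eigenfunction, to get density via the quadratic $P(\epsilon)$; continuity of $u_0\mapsto \mathcal H(t_0;S(t_0)u_0)$ for openness; and the Fourier computation showing $\mathcal H\equiv 0$ on each $\directmanifold$) are precisely concrete implementations of those three ingredients. The one hinge you rely on, correctly, is that the dichotomy of Theorem \ref{FHN1} identifies $\mathcal R_0$ with $\{u_0\in\mathcal R:\mathcal H(\cdot;u_0)\equiv 0\}$, so that a single time $t_0$ with $\mathcal H(t_0)\neq 0$ already places $u_0$ in $\mathcal R_1$.
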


The asymptotic decay of helicity is  precisely described in the next result.
   

   \begin{theorem}[Theorem 3.4 \cite{FHN1}]\label{FHN3}

    Let $u_0\in \mathcal R \setminus \lbrace 0 \rbrace$ and $n_0=\Lambda(u_0)$. Then
    $$\lim_{t\to \infty}\frac{\mathcal H(t)}{|u(t)|^2}=\alpha _0,  \quad \text{where} \; \alpha_0 \in \lbrack -n_0, n_0\rbrack.$$
    
    Moreover, for any  $n\in \sigma (A)$ and $\alpha \in \lbrack -\sqrt n,\sqrt n \rbrack,$ there exists $u_0\in \mathcal R$ such that the corresponding solution $u$ satisfies
    $$\Lambda(u_0)=n \text{ and }\lim_{t\to \infty} \frac{\mathcal H(t)}{|u(t)|^2}=\alpha.$$
   \end{theorem}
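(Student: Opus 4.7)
The plan is to exploit the leading-order asymptotic behavior of $u(t)$ already established in Section~\ref{secDirichlet}. First I would invoke Theorem~\ref{Dirichlet}(ii), upgraded to $V$-convergence (cf.\ \cite{FS3,Gui}), to write $u(t) = e^{-n_0 t}\xi + r(t)$ with $\xi \in R_{n_0}H$ and $\|r(t)\| = o(e^{-n_0 t})$ as $t\to\infty$. Since $u_0 \in M_{k-1}\setminus M_k$ for the index $k$ with $\Lambda_k = n_0$, Theorem~\ref{Mkchar} forces $\xi \neq 0$. Continuity of the curl operator from $V$ to $H$ then gives $\vecvor(t) = e^{-n_0 t}\,\text{curl}\,\xi + o(e^{-n_0 t})$ in $H$. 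Substituting into $\mathcal H(t) = \inprod{u(t)}{\vecvor(t)}$ and $|u(t)|^2$ and dividing out the common factor $e^{-2n_0 t}$ yields
\[
\lim_{t\to\infty} \frac{\mathcal H(t)}{|u(t)|^2} \;=\; \frac{\inprod{\xi}{\text{curl}\,\xi}}{|\xi|^2} \;=:\; \alpha_0.
\]

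The bound $\alpha_0 \in [-n_0,n_0]$ then drops out of the Cauchy--Schwarz inequality combined with the identity $|\text{curl}\,v| = \|v\|$ valid for divergence-free periodic $v$: one has $|\mathcal H(t)| \le |u(t)|\,\|u(t)\|$, hence $|\alpha_0| \le \sqrt{n_0} \le n_0$. A more structural explanation, which also foreshadows the sharper interval $[-\sqrt n,\sqrt n]$ in the second part of the statement, is that on $R_{n_0}H$ the operator $\text{curl}$ is self-adjoint with $\text{curl}^2 = A$; its spectrum there is $\{\pm\sqrt{n_0}\}$, and decomposing $\xi = \xi_+ + \xi_-$ into these eigenspaces gives $\alpha_0 = \sqrt{n_0}(|\xi_+|^2 - |\xi_-|^2)/|\xi|^2$.

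For the \emph{moreover} part, I would exhibit explicit Beltrami-type exact solutions. Given $n\in\sigma(A)$ pick $\veck\in\Z^3$ with $|\veck|^2 = n$, and vectors $a,b\in\R^3$ orthogonal to $\veck$; set
\[
u_0(x) \;=\; \rho\,\bigl(a\cos(\veck\cdot x) + b\sin(\veck\cdot x)\bigr),
\]
with $\rho>0$ small enough to ensure $u_0 \in\mathcal R$. A direct computation using $a\cdot\veck = b\cdot\veck = 0$ gives $\nabla\cdot u_0 = 0$ and $(u_0\cdot\nabla)u_0 \equiv 0$, so the NSE reduces to the heat equation restricted to the eigenspace of $A$ at eigenvalue $n$, and $u(t) = e^{-nt}u_0$ is the exact solution. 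Consequently $\Lambda(u_0) = n$ and $\mathcal H(t)/|u(t)|^2 \equiv \mathcal H(0)/|u_0|^2 = 2a\cdot(\veck\times b)/(|a|^2+|b|^2)$, a constant. Varying $(a,b)$ in the plane orthogonal to $\veck$ continuously sweeps this constant across the whole interval $[-\sqrt n,\sqrt n]$, attaining $\pm\sqrt n$ at the Beltrami configuration $a\perp b$, $|a|=|b|$.

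The main obstacle is the opening step: to replace $u(t)$ by its leading asymptotic term inside the bilinear pairing $\inprod{u}{\text{curl}\,u}$, the remainder $r(t)$ must be controlled in a norm dominating one spatial derivative. Thus the higher-regularity upgrade of Theorem~\ref{Dirichlet}(ii), or equivalently the expansion of Theorem~\ref{AsRe} in $V$, is essential; with $H$-level convergence alone one cannot conclude. Once this ingredient is in place the remainder of the proof is purely algebraic, built on the spectral structure of $A$ and $\text{curl}$.
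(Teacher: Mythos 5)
Your proposal is correct and follows essentially the same route as the proof the paper is reporting from \cite{FHN1}: the limit $\alpha_0$ is identified with $\inprod{\xi}{{\rm curl}\,\xi}/|\xi|^2$ for the leading coefficient $\xi\in R_{n_0}H$ of the asymptotic expansion (using the $V$-level convergence noted after Theorem \ref{Dirichlet}), the bound comes from the spectral decomposition of the curl operator on eigenspaces of $A$ (eigenvalues $\pm\sqrt{n}$) together with $|{\rm curl}\,v|=\|v\|$, and the attainability of every $\alpha\in[-\sqrt n,\sqrt n]$ is realized by the explicit exact solutions $e^{-nt}u_0$ with $(u_0\cdot\nabla)u_0=0$ supported in a single eigenspace. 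The only cosmetic point is that $\rho$ need not be small, since these Beltrami-type data yield global solutions for any amplitude.
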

   
  Note that one has the norm relation $\|u(t)\|=| \omega(t)|$, hence,
      $$\Lambda(u_0)=\lim_{t\to \infty}\lambda(t)=\lim_{t\to \infty}\frac{\|u(t)\|^2}{|u(t)|^2}=\lim_{t\to \infty}\frac{|\omega(t)|^2}{|u(t)|^2}.$$

If $\alpha_0\neq 0$ in the previous theorem, one is in case  (i) of Theorem \ref{FHN1} (helicity decays) with $d=0$ and $h_0=n_0\in \sigma( A).$ The situation where  $\alpha_0=0$ is considered in the next theorem.
   
   \begin{theorem}[Theorem 3.5 \cite{FHN1}]\label{FHN4}
   For any $n\in \sigma (A)$ and $M>0,$ there  exists an  initial data $u_0\in \mathcal R$ such that one is in  case (i) of Theorem \ref{FHN1} with $n_0=n$ and $h_0\geq n_0+M,$ and such that
   $$\frac{\mathcal H(t)}{|u(t)|^2}=O(e^{-2Mt}) \quad \text{when}\; t\to \infty.$$
   
   Moreover, there exist   solutions whose  helicity satisfies the  condition    
   $$\lim_{t\to \infty} \mathcal H(t)t^{-d}e^{2h_0t} \quad \text{exists and is not zero},$$
   where $d > 0$ or $h_0$ is not an eigenvalue of $A$.
   \end{theorem}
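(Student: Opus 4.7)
The common ingredient is the formal helicity expansion induced by the asymptotic expansion of $u(t)$. Combining Theorems \ref{AsRe} and \ref{Gevrey}, which give $u(t)\sim\sum_{j}W_{\mu_j}(t)e^{-\nu\mu_j t}$ in every Gevrey norm, with the boundedness of $\text{curl}$ in those norms, one obtains
\[
\mathcal H(t)\sim\sum_{p\in\mathscr S(A)}\phi_p(t)e^{-p t},\qquad \phi_p(t)=\sum_{\mu_i+\mu_j=p}\inprod{W_{\mu_i}(t)}{\text{curl}\,W_{\mu_j}(t)},
\]
with each $\phi_p(t)$ a polynomial in $t$. The parameters $(d,h_0)$ of case (i) of Theorem \ref{FHN1} are precisely the degree and halved exponent of the first $\phi_p$ not identically zero. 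Both assertions then reduce to designing $u_0$ so that the first non-vanishing $\phi_p$ occurs at a prescribed exponent and, when needed, has a prescribed polynomial degree.

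For the first assertion, I would choose a nonzero $a_0\in R_n H\cap\directmanifold$, where $\directmanifold$ is the helicity-free invariant linear manifold of Remark \ref{Maper}. This is possible for any $n\in\sigma(A)$: pick a Fourier mode $\veck_0\in\Z^3$ with $|\veck_0|^2=n$, then pick $\veca\in\R^3$ with $\veca\cdot\veck_0=0$ so that $\veck_0\in\orthplane$. Then $\Lambda(a_0)=n$ automatically, and the elementary identity $\veca\cdot(\veck\times\veca)=0$ forces $\inprod{a_0}{\text{curl}\,a_0}=0$, so $\phi_{2n}\equiv 0$. Next set $u_0=a_0+b_0$ with $b_0$ supported in Stokes eigenspaces of eigenvalue strictly larger than $n$. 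Since $B(a_0,a_0)=0$, the Foias--Saut recursion propagates $a_0$ trivially, and the conditions $\phi_p\equiv 0$ for the finitely many $p$ with $2n<p<2(n+M)$ reduce to finitely many linear orthogonality constraints on the components of $b_0$; the admissible $b_0$'s form an infinite-dimensional space, so these constraints are solvable. A generic such $b_0$ further ensures $\phi_{2(n+M)}\not\equiv 0$, placing $u_0$ in case (i) of Theorem \ref{FHN1} with $n_0=n$ and $h_0\geq n+M$, whence $\mathcal H(t)/|u(t)|^2=O(e^{-2Mt})$.

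For the second assertion, two sub-constructions are needed. To obtain $h_0\notin\sigma(A)$: in the scaled 3D periodic case $\sigma(A)=\{|\veck|^2:\veck\in\Z^3\setminus\{0\}\}$ omits, by Legendre's three-square theorem, every integer of the form $4^a(8b+7)$ (the smallest being $7$); apply the same elimination procedure to place the first non-vanishing $\phi_p$ at $p=2h_0$ for such an omitted integer $h_0$. To obtain $d>0$: exploit the resonant periodic setting where, by Theorem \ref{AsRe}(v), some $W_{\mu_j}(t)$ can be a genuine polynomial of positive degree; engineer $u_0$ so that the degree-zero part of the first surviving $\phi_p(t)$ is cancelled while a higher-degree part survives.

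The main obstacle is the simultaneous cancellation of the finitely many lower-order $\phi_p$'s in the first assertion, because the Foias--Saut recursion is nonlinear and couples the $W_{\mu_j}$'s. The enabling facts are triangularity (the constraint on $\phi_p$ involves only $W_{\mu_j}$'s with $\mu_j\leq p-n$), the vanishing $B(a_0,a_0)=0$ which removes the self-interaction of the leading mode, and the infinite-dimensionality of admissible high-frequency perturbations $b_0$. Together these permit solving the cancellations order by order without obstruction, and an analogous bookkeeping underlies the two variants in the second assertion.
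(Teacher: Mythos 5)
The survey itself does not reproduce the proof of this theorem (it defers to \cite{FHN1} and only notes that the arguments rest on the spectrum of the curl operator and the global stability result of \cite{PRST}), so your proposal must stand on its own, and it has a genuine gap exactly where the content of the theorem lies. Your skeleton is the right one and matches the spirit of \cite{FHN1}: read $d$ and $h_0$ off the first non-vanishing coefficient $\phi_p$ of the induced expansion $\mathcal H(t)\sim\sum_p\phi_p(t)e^{-pt}$, start from $a_0\in R_nH\cap\directmanifold$ to kill the leading self-interaction, and perturb by high modes $b_0$. But the two load-bearing claims are unjustified. First, the vanishing conditions $\phi_p\equiv 0$ for $2n\le p<2(n+M)$ are not ``finitely many linear orthogonality constraints on $b_0$'': each $W_{\mu_j}(0;u_0)$ is a nonlinear (polynomial) function of the \emph{whole} datum $u_0$ --- already $W_n(u_0)=R_nu_0-\int_0^\infty e^{nt}R_nB(S(t)u_0,S(t)u_0)\,dt$ picks up contributions from $b_0$ --- and $\phi_p$ is quadratic in these coefficients, so the constraints are genuinely polynomial, and their simultaneous solvability together with $\phi_{2(n+M)}\not\equiv 0$ cannot be settled by a dimension count. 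Second, and more seriously, ``a generic such $b_0$ ensures $\phi_{2(n+M)}\not\equiv 0$'' is precisely what has to be proved: distinct Fourier shells are $L^2$-orthogonal and $\curlop$ preserves them, so the naive cross pairings between $R_nH$ and $R_mH$ ($m\ne n$) vanish identically, and the first surviving helicity must be manufactured through the nonlinear corrections of the Foias--Saut recursion. Theorem \ref{FHN2} shows that $\mathcal H\equiv 0$ holds on infinite-dimensional manifolds, so non-vanishing is not generic in any cheap sense; one must exhibit explicit data and compute. This is exactly what the curl-eigenfunction (Beltrami) decomposition of each $R_nH$ into the eigenspaces of $\curlop$ with eigenvalues $\pm\sqrt n$ is for in \cite{FHN1}: it turns $\phi_{2n}$ into $\sqrt n\,\bigl(|W_n^+|^2-|W_n^-|^2\bigr)$ and the higher pairings into computable quantities whose non-vanishing can be checked. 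Your proposal never introduces this tool.

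The same gap affects both halves of the second assertion. For $h_0\notin\sigma(A)$, the Legendre three-square observation is neither necessary nor sufficient: since $2h_0\in\mathscr S(A)=\N$ here, arranging the first surviving exponent to be odd already forces $h_0\notin\N\supset\sigma(A)$; but in either case you still owe the proof that the first nonzero $\phi_p$ actually lands at the prescribed $p$. For $d>0$, ``engineer $u_0$ so that the degree-zero part of the first surviving $\phi_p$ is cancelled while a higher-degree part survives'' restates the goal rather than achieving it: one must produce a resonant secular term $t^k$, $k\ge 1$, in some $W_{\mu_j}(t)$ whose helicity pairing is provably nonzero, which again requires the explicit computations your argument postpones. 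In short, the framework and the use of $\directmanifold$ are correct, but every existence claim in the theorem reduces to the non-vanishing of a specific coefficient, and that is the part of the proof you have not supplied.
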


Comments on the proofs of Theorems \ref{FHN2}--\ref{FHN4}.

   \begin{itemize}
   \item The properties of the set $\mathcal R_0$ of initial data leading to an identically zero helicity result from a study of the spectrum of the  curl operator and of a global stability result of NSE in 3D (\cite{PRST}).

\item Examples of linear submanifolds of $\mathcal R_0$ are, among others, the family $\directmanifold$ presented in Remark \ref{Maper}.
   \end{itemize}

  
     \section{The Poincar\'e-Dulac theory of normal forms}
   \label{secODE}
   
   This section briefly reviews the Poincar\'e-Dulac theory of normal forms.  This is, of course,  a classical topic in dynamical systems, initiated by Poincar\' e and, later, Dulac (see \cite{Dul, Poin}) to analyze the dynamics of a nonlinear system of ODEs in the neighborhood of a singular point. We refer  to Arnold's book \cite{Ar} for a modern treatment.

The next theorem is extracted from Poincar\' e's thesis (1879).
   
   \begin{theorem} [Poincar\' e's thesis 1879 \cite{HPThese}]\label{Pthm}
  
   If the eigenvalues of the matrix $A$ are nonresonant, the equation 
   \begin{equation}\label{PD1}
\frac{dx}{dt}+Ax+\sum_{d=2}^\infty \phi^{\lbrack d\rbrack}(x)=0,    
   \end{equation}
   where  each $\phi^{\lbrack d\rbrack}$ is a homogeneous polynomial of degree $d$ in $\R^n,$   
   reduces to the linear equation
  \begin{equation*}
   \frac{dy}{dt}+Ay=0
  \end{equation*} 
  by a formal change of variable   
   \begin{equation}\label{PDchange}
    x=y+\sum_{d=2}^{\infty}\psi^{\lbrack d\rbrack}(y),
   \end{equation}
    where  each $\psi^{\lbrack d\rbrack}$ is a homogeneous polynomial of degree $d$.   
   \end{theorem}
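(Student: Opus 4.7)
The plan is to construct the coefficients $\psi^{[d]}$ recursively in $d$, at each step solving a linear equation on the finite-dimensional space of vector-valued homogeneous polynomials of degree $d$. Writing $h(y) = y + \sum_{d \geq 2}\psi^{[d]}(y)$ and imposing that $y$ satisfy $dy/dt + Ay = 0$, I would substitute $x = h(y)$ into \eqref{PD1} and match the two expressions
\beqs
\frac{dx}{dt} = -h'(y)\,Ay \quad \text{and} \quad \frac{dx}{dt} = -Ah(y) - \sum_{d \geq 2}\phi^{[d]}(h(y)),
\eeqs
which collapse to the single formal identity
\beq\label{PDmaster}
Ah(y) - h'(y)\,Ay = -\sum_{d \geq 2} \phi^{[d]}(h(y)).
\eeq

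Next I would expand \eqref{PDmaster} as a formal power series in $y$ and compare the degree-$d$ components of both sides for each $d \geq 2$. The left-hand side contributes exactly $\mathcal L\psi^{[d]}$ at degree $d$, where $\mathcal L$ is the linear cohomological operator
\beqs
\mathcal L\psi := A\psi(y) - \psi'(y)\,Ay
\eeqs
on the finite-dimensional space $\mathcal H_d$ of $\R^n$-valued homogeneous polynomials of degree $d$. The right-hand side at degree $d$ involves only $\phi^{[2]},\ldots,\phi^{[d]}$ and $\psi^{[2]},\ldots,\psi^{[d-1]}$, because any $\psi^{[k]}$ with $k \geq d$ would push the total degree above $d$. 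Hence \eqref{PDmaster} reduces to the recursion
\beqs
\mathcal L\psi^{[d]} = R^{[d]}\bigl(\phi^{[2]},\ldots,\phi^{[d]};\,\psi^{[2]},\ldots,\psi^{[d-1]}\bigr),\quad d \geq 2,
\eeqs
in which $R^{[d]} \in \mathcal H_d$ is explicitly determined once the lower-degree corrections have been fixed.

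The key step, and the main obstacle, is to show that $\mathcal L$ is invertible on $\mathcal H_d$ for every $d \geq 2$. Assuming first that $A$ is diagonalizable with eigenvalues $\lambda_1,\ldots,\lambda_n$ and an eigenbasis $e_1,\ldots,e_n$, the monomials $y^\alpha e_j$ with $|\alpha| = d$ form a basis of $\mathcal H_d$, and the identity $\nabla y^\alpha \cdot Ay = (\alpha\cdot\lambda)\, y^\alpha$ yields the explicit formula
\beqs
\mathcal L\bigl(y^\alpha e_j\bigr) = \bigl(\lambda_j - \alpha\cdot\lambda\bigr)\, y^\alpha e_j.
\eeqs
Thus $\mathcal L$ is diagonal in the monomial basis, and the nonresonance assumption (that $\lambda_j \ne \sum_i \alpha_i \lambda_i$ for all $j$ and every multi-index with $|\alpha| \geq 2$) is precisely the statement that every diagonal entry is nonzero. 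Hence $\mathcal L$ is a bijection of $\mathcal H_d$, and $\psi^{[d]}$ is uniquely determined. The non-diagonalizable case is handled by putting $A$ in Jordan form, after which $\mathcal L$ becomes upper triangular in a lexicographically ordered monomial basis with the same diagonal entries $\lambda_j - \alpha\cdot\lambda$, so invertibility persists.

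Since the theorem asserts only a \emph{formal} change of variables, no convergence of $\sum \psi^{[d]}(y)$ needs to be established---which is fortunate, because the small divisors $\lambda_j - \alpha\cdot\lambda$ typically fail to stay bounded away from zero as $|\alpha| \to \infty$, and that is precisely the obstruction which prevents the same scheme from producing an analytic conjugacy in general.
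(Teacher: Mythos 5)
Your proposal is correct and is the standard homological-equation argument: conjugating term by term, reducing each degree to the invertibility of the operator $\mathcal L\psi = A\psi(y)-\psi'(y)Ay$ on homogeneous polynomials, and reading off its eigenvalues $\lambda_j-\alpha\cdot\lambda$ in the monomial basis, which nonresonance keeps away from zero. The paper itself gives no proof of this theorem --- it is quoted as a classical result from Poincar\'e's thesis, with Arnold's book cited for a modern treatment --- so there is nothing to compare against; your argument is the one found in the standard references. The only step stated somewhat tersely is the non-diagonalizable case: the cleanest justification is that $\mathcal L$ splits as a commuting sum of a semisimple part (induced by the semisimple part of $A$, with the same eigenvalues $\lambda_j-\alpha\cdot\lambda$) and a nilpotent part, so its spectrum is unchanged; your triangularization claim amounts to the same thing and is fine for a formal statement.
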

  
   Retrospectively the following extract of Bonnet and Darboux report on Poincar\' e's thesis (see \cite{Gis} page 331) is somewhat amazing:

  \begin{quote}
    ... Quelques lemmes de l'introduction ont aussi paru dignes d'int\' er\^{e}t. Le reste de la th\`ese est un peu confus et prouve que l'auteur n'a pu encore parvenir \`a exprimer ses id\' ees d'une mani\`ere claire et simple. N\' eanmoins la Facult\' e tenant compte de la grande difficult\' e du sujet et du talent qu'a montr\' e M. Poincar\' e lui a conf\' er\' e avec trois boules blanches le grade de docteur.
   \end{quote}
   
     \begin{quote}
       \textit{(Translation)} ... the remainder of the thesis is a little confused and shows that the author was still unable to express his ideas in a clear and simple manner. Nevertheless, considering the great difficulty of the subject and the talent demonstrated, the faculty recommends that M. Poincar\' e be granted the degree of Doctor with all privileges. 
\end{quote}


\medskip
The resonant case was treated in Dulac's thesis (1912).

    \begin{definition}\label{classical}
Suppose an $n\times n$ matrix $A$ has eigenvalues $\lambda_1,\ldots,\lambda_n$ and corresponding eigenvectors $\xi_1,\ldots,\xi_n$. For each $x\in\R^n$, let $x_i$ be its coordinate with respect to $\xi_i$.
A monomial $x_1^{\alpha_1} x_2^{\alpha_2}\ldots x_n^{\alpha_n} \xi_k$ of degree two or higher is called \highlight{resonant} if
\beqs
\lambda_k = \alpha_1 \lambda_1+\alpha_2 \lambda_2+\ldots+\alpha_n \lambda_n. 
\eeqs
\end{definition}

   
   \begin{theorem}    [Dulac \cite{Dul}]\label{Dthm}
   The equation \eqref{PD1} reduces, by a formal change of variable \eqref{PDchange}, to the canonical form
   $$\frac{dy}{dt}+Ay+\sum_{d=2}^\infty \Theta^{\lbrack d\rbrack}(y)=0,$$
   where all monomials  in each $\Theta^{\lbrack d\rbrack}$ are resonant.   
    \end{theorem}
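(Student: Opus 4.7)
The plan is to proceed by induction on the degree $d\ge 2$, constructing a homogeneous change of variable $\psi^{[d]}$ at each stage that normalizes the degree-$d$ term while leaving all lower-degree terms untouched. The central object I would introduce is the \emph{homological operator} $\mathcal{L}_A$ acting on the finite-dimensional space $\mathcal{P}^{[d]}$ of $\R^n$-valued homogeneous polynomials of degree $d$ by
$$(\mathcal{L}_A\psi)(y)\eqdef D\psi(y)\cdot Ay-A\psi(y).$$
Assuming first that $A$ is diagonalized in the eigenbasis $\xi_1,\ldots,\xi_n$ of Definition~\ref{classical}, a direct calculation on the monomial basis $\{y^\alpha\xi_k:|\alpha|=d\}$ of $\mathcal{P}^{[d]}$ yields
$$\mathcal{L}_A(y^\alpha\xi_k)=(\alpha\cdot\lambda-\lambda_k)\,y^\alpha\xi_k,\qquad \alpha\cdot\lambda\eqdef \alpha_1\lambda_1+\ldots+\alpha_n\lambda_n.$$
Thus $\mathcal{L}_A$ is diagonal in the monomial basis: its kernel is spanned precisely by the resonant monomials, its image is spanned by the non-resonant ones, and $\mathcal{P}^{[d]}=\ker\mathcal{L}_A\oplus\operatorname{im}\mathcal{L}_A$.

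For the induction itself, suppose a previous sequence of polynomial substitutions has already brought the equation into the form
$$\frac{dy}{dt}+Ay+\Theta^{[2]}(y)+\ldots+\Theta^{[d-1]}(y)+R^{[d]}(y)+\text{(terms of degree}\ge d+1)=0,$$
with each $\Theta^{[k]}$ consisting only of resonant monomials. Applying the substitution $y=z+\psi^{[d]}(z)$ with $\psi^{[d]}\in\mathcal{P}^{[d]}$, one formally inverts $I+D\psi^{[d]}$ as $I-D\psi^{[d]}+O(|z|^{2(d-1)})$; since $\Theta^{[k]}(z+\psi^{[d]}(z))-\Theta^{[k]}(z)=O(|z|^{k+d-1})$ with $k+d-1\ge d+1$, all degree-$\le d-1$ terms are left intact, and the degree-$d$ term becomes
$$R^{[d]}(z)+A\psi^{[d]}(z)-D\psi^{[d]}(z)\cdot Az=R^{[d]}(z)-\mathcal{L}_A\psi^{[d]}(z).$$
Decomposing $R^{[d]}=R^{[d]}_{\mathrm{res}}+R^{[d]}_{\mathrm{nres}}$ along $\ker\mathcal{L}_A\oplus\operatorname{im}\mathcal{L}_A$, I would choose any $\psi^{[d]}$ solving the cohomological equation $\mathcal{L}_A\psi^{[d]}=R^{[d]}_{\mathrm{nres}}$ (possible by construction), and set $\Theta^{[d]}\eqdef R^{[d]}_{\mathrm{res}}$. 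Iterating over all $d\ge 2$ and assembling the $\psi^{[d]}$ into the formal series \eqref{PDchange} produces the desired canonical form.

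The hard part will be the case where $A$ is not diagonalizable. In Jordan form one has the commuting decomposition $A=A_s+A_n$ with $A_s$ semisimple and $A_n$ nilpotent, whence $\mathcal{L}_A=\mathcal{L}_{A_s}+\mathcal{L}_{A_n}$ with $[\mathcal{L}_{A_s},\mathcal{L}_{A_n}]=0$ and $\mathcal{L}_{A_n}$ nilpotent on $\mathcal{P}^{[d]}$. The monomial calculation above still gives the eigenvalues of the semisimple part $\mathcal{L}_{A_s}$ as $\alpha\cdot\lambda-\lambda_k$, so a Fitting-lemma argument produces the direct-sum decomposition $\mathcal{P}^{[d]}=\ker\mathcal{L}_A\oplus\operatorname{im}\mathcal{L}_A$ with $\ker\mathcal{L}_A$ still generated by resonant monomials; the inductive step then proceeds verbatim. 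Note that no convergence of the formal series \eqref{PDchange} is asserted: Theorem~\ref{Dthm} is strictly a statement about formal power series, and Poincaré's (and later Siegel's) analytic-normalization results are an altogether separate matter.
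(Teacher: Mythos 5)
Your proof is correct, but be aware that the paper itself offers no proof of Theorem \ref{Dthm}: it is quoted as a classical result, with the argument deferred to Dulac \cite{Dul} and to Arnold's book \cite{Ar}. What you have written is precisely the standard argument found there, namely degree-by-degree elimination via the homological equation, and your sign bookkeeping is correctly adapted to the form \eqref{PD1} of the equation: after the substitution $x=z+\psi^{[d]}(z)$ the new degree-$d$ coefficient is indeed $R^{[d]}-\mathcal L_A\psi^{[d]}$ with $\mathcal L_A\psi=D\psi\cdot Ay-A\psi$, the eigenvalue of $\mathcal L_A$ on $y^\alpha\xi_k$ is $\alpha\cdot\lambda-\lambda_k$, and hence $\ker\mathcal L_A$ is exactly the span of the resonant monomials of Definition \ref{classical}, so the uneliminable remainder $\Theta^{[d]}$ is resonant as required. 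Two small points of precision. First, in the non-semisimple case the clean splitting is $\mathcal P^{[d]}=\ker\mathcal L_{A_s}\oplus\operatorname{im}\mathcal L_{A_s}$ (equivalently $\ker\mathcal L_A^N\oplus\operatorname{im}\mathcal L_A^N$ for $N$ large, by Fitting), not literally $\ker\mathcal L_A\oplus\operatorname{im}\mathcal L_A$; what makes the induction go through is that $\operatorname{im}\mathcal L_{A_s}$, the span of the non-resonant monomials, is $\mathcal L_A$-invariant and $\mathcal L_A$ is invertible on it (semisimple-with-nonzero-eigenvalues plus commuting nilpotent), so the homological equation is solvable there and $\Theta^{[d]}\in\ker\mathcal L_{A_s}$. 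Second, Definition \ref{classical} tacitly assumes an eigenbasis; for real $A$ with complex or repeated eigenvalues one complexifies, works in $\mathbb{C}^n$, and descends using the reality of the original vector field. Neither point damages your argument, and your closing remark that the theorem is a purely formal statement, with convergence a separate (and for the NSE still open) matter, is exactly the caveat the paper itself emphasizes in Section \ref{secFinal}.
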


\section{A normalization map and a normal form for NSE}
\label{secnormal}

One can use the   {\it generating part} of the asymptotic expansion to construct a  normalization of the  NSE.

We first define the   Fr\' echet  space 
\beq\label{SA} \mathcal S_A=R_1H\oplus R_2H\oplus \ldots\eeq 
endowed with the topology of   convergence of components. 
The Stokes operator $A$ extends trivially to $\mathcal S_A$.

    \subsection{The non-resonant case}\label{subnonres}
    Assume $\sigma(A)$ is non-resonant.
    
      \begin{theorem}[Theorem 3, Corollaries 1 and 2 \cite{FS3}]\label{norm1}
    Define the mapping $W:\mathcal R\to \mathcal S_A$  by 
        $$W(u_0)=W_{\Lambda_1}(u_0)\oplus W_{\Lambda_2}(u_0)\oplus \ldots \text{ for } u_0\in\solnset.$$
     Then:   
    \begin{enumerate}[label={\rm (\roman*)}]
    \item $W$    is analytic and one-to-one.
    \item $W$ linearizes the  NSE in the sense
    $$W(u(t))=e^{-\nu At}W(u_0),\quad \forall u_0\in\mathcal R, \quad \forall t\geq 0.$$
    
    \item $u_0\in M_k$ if and only if the  first $k$ components of $W(u_0)$ vanish.
    
    \end{enumerate}
     \end{theorem}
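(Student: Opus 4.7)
The three claims decouple into three nearly independent tasks: (ii) and (iii) can be read off directly from the expansion of Theorem \ref{resexp}, the analyticity in (i) follows from integral/limit formulas for each coefficient, and the injectivity in (i) needs a separate backward-uniqueness argument. I would handle them in this order, because (ii) exhibits $W$ as a conjugacy from the NSE to a linear flow and makes (iii) essentially a restatement of Theorem \ref{Dirichlet}(v), while the analyticity is a byproduct of the inductive construction of the $W_{\mu_j}$'s.

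For (ii), fix $t\ge 0$ and compare the two asymptotic expansions of $u(t+s)$ as $s\to\infty$. Applying Theorem \ref{resexp} to the solution $u(\cdot)$ at the shifted time $t+s$ yields
\[
u(t+s)=\sum_{j=1}^N\bigl[W_{\Lambda_j}(u_0)e^{-\nu\Lambda_j t}\bigr]e^{-\nu\Lambda_j s}+O(e^{-\nu(\Lambda_N+\varepsilon_N)s}),
\]
whereas applying it to the solution $S(\cdot)(u(t))$ at time $s$ gives
\[
u(t+s)=\sum_{j=1}^N W_{\Lambda_j}(u(t))e^{-\nu\Lambda_j s}+O(e^{-\nu(\Lambda_N+\varepsilon_N)s}).
\]
Uniqueness of the coefficients of such expansions (multiply by $e^{\nu\Lambda_1 s}$, let $s\to\infty$, and iterate) forces $W_{\Lambda_j}(u(t))=e^{-\nu\Lambda_j t}W_{\Lambda_j}(u_0)$, i.e.\ $W(u(t))=e^{-\nu At}W(u_0)$. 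For (iii), in the non-resonant case the recurrence
\[
\nu(A-\mu_j)W_{\mu_j}(u_0)+\sum_{\mu_i+\mu_\ell=\mu_j}B(W_{\mu_i}(u_0),W_{\mu_\ell}(u_0))=0,
\]
combined with the invertibility of $A-\mu_j I$ whenever $\mu_j\notin\sigma(A)$, inductively expresses every non-eigenvalue coefficient $W_{\mu_j}(u_0)$ as a polynomial in the lower $W_{\Lambda_i}(u_0)$'s; hence $W_{\Lambda_1}(u_0)=\cdots=W_{\Lambda_k}(u_0)=0$ forces $W_{\mu_j}(u_0)=0$ for all $\mu_j<\Lambda_{k+1}$, so that $|u(t)|=O(e^{-\nu\Lambda_{k+1}t})$ and $\Lambda(u_0)\ge\Lambda_{k+1}$, i.e.\ $u_0\in M_k$ by Theorem \ref{Dirichlet}(v); the converse is the Remark following Theorem \ref{AsRe}.

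For the analyticity half of (i), I induct on $j$. The eigenmode projection of the NSE reads $\frac{d}{dt}(e^{\nu\Lambda_j t}R_j u(t))=-e^{\nu\Lambda_j t}R_j B(u(t),u(t))$, and after subtracting from $u$ its already-constructed lower-order asymptotic components (to secure integrability at $\tau=\infty$), one rewrites
\[
W_{\Lambda_j}(u_0)=R_j u_0+\int_0^\infty G_j(\tau,u_0)\,d\tau,
\]
where $G_j(\tau,u_0)$ is a polynomial expression in $S(\tau)u_0$ and in $W_{\Lambda_1}(u_0),\ldots,W_{\Lambda_{j-1}}(u_0)$, dominated by $C(u_0)e^{-\delta\tau}$ locally uniformly in $u_0$. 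Combining the analyticity of $(t,v)\mapsto S(t)v$ (the result of \cite{F74} already used in the proof of Theorem \ref{Dirichlet}) with this uniform tail bound and the inductive hypothesis on $W_{\Lambda_1},\ldots,W_{\Lambda_{j-1}}$ permits the exchange of the integral with power-series expansions in $u_0$, yielding $W_{\Lambda_j}$ analytic on $\mathcal R$.

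Injectivity is the step I expect to be the main obstacle. If $W(u_0)=W(\tilde u_0)$, the recurrence from (iii) propagates equality to every coefficient $W_{\mu_j}(u_0)=W_{\mu_j}(\tilde u_0)$ for $\mu_j\in\mathscr S(A)$, so $u(t)$ and $\tilde u(t)$ share the same truncated expansion for every $N$; subtracting, $w(t)=u(t)-\tilde u(t)$ satisfies $\|w(t)\|_m=O(e^{-Ct})$ for every $C>0$ and every $m\ge 0$. The perturbation equation $w_t+Aw=-B(u,w)-B(w,\tilde u)$, together with the Foias-Temam backward-uniqueness theorem for the Navier-Stokes equations (a log-convexity estimate for $t\mapsto \log|w(t)|^2$ that rules out faster-than-exponential decay of any nontrivial $w$), then forces $w\equiv 0$, i.e.\ $u_0=\tilde u_0$. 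The delicate point is controlling the nonlinear terms $B(u,w)+B(w,\tilde u)$ by $\|w\|$ uniformly in $t$ so that the log-convexity inequality is applicable; fortunately, the uniform bounds on $u$ and $\tilde u$ in $D(A^k)$ provided by Theorem \ref{resexp} make this routine in the NSE setting.
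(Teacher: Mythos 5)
The survey states this theorem without proof (it simply cites \cite{FS3}), but your proposal follows exactly the route of that source and of the ingredients the survey displays elsewhere: uniqueness of the coefficients in the asymptotic expansion of $u(t+s)$ for (ii); the non-resonant recurrence $\nu(A-\mu_j)W_{\mu_j}=-\sum B(W_{\mu_i},W_{\mu_\ell})$ together with the identification of $\Lambda(u_0)$ as the first nonvanishing coefficient for (iii); integral formulas of the same type as the $\Phi_k$ in the proof of Theorem \ref{Dirichlet} combined with the analyticity of $(t,v)\mapsto S(t)v$ from \cite{F74} for analyticity; and super-exponential decay of $w=u-\tilde u$ plus a backward-uniqueness/log-convexity estimate for injectivity. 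All four steps are sound as sketched, so this is essentially the intended proof.
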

     
In this non-resonant case,
$v(t)=W(u(t))$ is, thanks to (ii), a  solution of the  {\it linear}  Navier-Stokes equations in the large space  $\mathcal S_A$, i.e.,
\beq\label{Steq}\frac{dv}{dt}+Av=0.\eeq

Thus, the mapping $W$ transforms the (nonlinear) NSE \eqref{NS} to the linear one \eqref{Steq}, which is the case of Theorem \ref{Pthm}. Therefore, we call $W$ a normalization map, even though it is not a formal series.

   \subsection{The resonant case}\label{subres}
       Assume $\sigma(A)$ is resonant.
   In view of the structure of the  asymptotic expansion in this case, considering the following mapping $W$ is natural.
   
   \begin{theorem}[Theorem 5 \cite{FS3}]\label{norm2}
   The mapping  $W: \mathcal R \to \mathcal S_A$ given by   
   $$W(u_0)=R_1W_{\Lambda_1}(0;u_0)\oplus R_2W_{\Lambda_2}(0;u_0)\oplus R_3W_{\Lambda_3}(0;u_0)\oplus\ldots \text{ for } u_0\in\solnset,$$
   is analytic and one-to-one.
 \end{theorem}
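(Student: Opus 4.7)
The plan has two ingredients: first, analyticity of each component $u_0 \mapsto R_j W_{\Lambda_j}(0;u_0)$, proved by induction on $j$; second, injectivity, proved by reconstructing the full asymptotic expansion from $W(u_0)$ and invoking a backward uniqueness property of the NSE.

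For analyticity, I use the joint analyticity of the flow $(t,u_0)\mapsto S(t)u_0$ established in \cite{F74}. Suppose inductively that $R_l W_{\Lambda_l}(0;\cdot)$ is analytic on $\mathcal R$ for every $l \le j-1$. Theorem \ref{AsRe}(v)--(vi) asserts that every polynomial $W_{\mu_\ell}(t)$ with $\mu_\ell < \Lambda_j$, together with all coefficients of $R_k W_{\Lambda_j}(t)$ for $k \ne j$ and all coefficients of order $\ge 1$ in $R_j W_{\Lambda_j}(t)$, are obtained from $R_1 W_{\Lambda_1}(0),\ldots,R_{j-1} W_{\Lambda_{j-1}}(0)$ by successive elementary integrations; in particular they are analytic functions of $u_0$. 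The only datum of the expansion up through order $\Lambda_j$ that is not yet determined is the constant $R_j W_{\Lambda_j}(0;u_0)$, and it can be extracted as the limit of $e^{\nu\Lambda_j t} R_j$ applied to $u(t)$ minus all the already-known polynomial contributions. Theorem \ref{AsRe}(i) supplies an exponential error bound for this limit, and one checks (using the local uniformity of the estimates behind Theorem \ref{Dirichlet}) that the convergence is locally uniform in $u_0$. Hence $R_j W_{\Lambda_j}(0;\cdot)$ is analytic as a locally uniform limit of analytic functions. The base case $j=1$ is identical to the non-resonant treatment, since $\mu_1=\Lambda_1$ forces $W_{\Lambda_1}$ to be constant and equal to $\lim_{t\to\infty} e^{\nu\Lambda_1 t} R_{\Lambda_1} S(t) u_0$.

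For injectivity, let $u_0,\tilde u_0 \in \mathcal R$ with $W(u_0)=W(\tilde u_0)$, and let $u,\tilde u$ be the associated regular solutions. Since the recursion in Theorem \ref{AsRe}(v)--(vi) is a deterministic function of the free data $\{R_j W_{\Lambda_j}(0)\}_{j\ge 1}$, the polynomials $W_{\mu_j}(t;u_0)$ and $W_{\mu_j}(t;\tilde u_0)$ coincide for every $j$. Applying Theorem \ref{AsRe}(i) for arbitrarily large $N$ yields
\[
|u(t) - \tilde u(t)| = O(e^{-\nu N t}) \quad \text{for every } N \in \N.
\]
The difference $w = u - \tilde u$ satisfies a linear equation whose coefficients involve the smooth, asymptotically small $u$ and $\tilde u$; a Foias--Temam log-convexity argument for $|w(t)|^2$, in the spirit of the backward uniqueness results referenced in Remark \ref{backrmk}, implies that such super-exponential decay forces $w \equiv 0$, whence $u_0 = \tilde u_0$. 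The principal technical obstacle I anticipate is securing the local uniformity in $u_0$ of the exponential error bounds of Theorem \ref{AsRe}(i) needed to pass analyticity through the limit representation of $R_j W_{\Lambda_j}(0;\cdot)$; this reduces to showing that the constants and decay exponents in the proof of Theorem \ref{Dirichlet} can be chosen uniformly on compact neighborhoods of any $u_0 \in \mathcal R$, an issue of careful bookkeeping in the inductive construction rather than of a new idea.
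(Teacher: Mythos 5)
The survey does not actually prove Theorem~\ref{norm2}; it is quoted from \cite{FS3} without argument, so there is no in-paper proof to compare against. Your proposal correctly reconstructs the argument of \cite{FS3}: analyticity of each component via induction, using the successive-integration structure of Theorem~\ref{AsRe}(v)--(vi) together with a limit (equivalently, integral) representation of $R_jW_{\Lambda_j}(0;\cdot)$ of the same kind as the maps $\Phi_k$ appearing in the proof of Theorem~\ref{Dirichlet}; and injectivity by noting that $W(u_0)$ determines the entire expansion, so the difference of two solutions decays faster than any exponential and must vanish by the log-convexity/backward-uniqueness argument (the same mechanism behind the Dyer--Edmunds lower bound cited in Section~\ref{secintro}). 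The only point you leave unverified --- local uniformity in $u_0$ of the remainder estimates, needed to pass analyticity through the limit --- is exactly the technical content supplied in \cite{FS3}, and you have identified it accurately.
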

   
We will see that this mapping $W$ also plays the role of a normalization map. First, we find important polynomials that are essential in transforming the NSE into a normal form.

\begin{lemma}[Lemma 7, \cite{FS3}]\label{mPjlem}
 For every $j=1,2,3,\ldots$, there exists a multilinear function $\mP_j$, defined on $R_1H\oplus\ldots\oplus R_{k_j}H$, depending  on $\sigma (A)$, $B$, $\nu$, such that
\beqs
W_{\mu_j}(u_0)=\mP_j(W_1(u_0),W_2(u_0),\ldots,W_{k_j}(u_0)).
\eeqs
\end{lemma}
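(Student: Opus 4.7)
The plan is to proceed by strong induction on $j$, using the recurrence ODE for $W_{\mu_j}(t)$ furnished by Theorem \ref{AsRe}(iv), namely
\begin{equation*}
\frac{dW_{\mu_j}(t)}{dt}+\nu(A-\mu_j)W_{\mu_j}(t)+\sum_{\mu_l+\mu_k=\mu_j}B(W_{\mu_l}(t),W_{\mu_k}(t))=0.
\end{equation*}
In the sum, one always has $\mu_l,\mu_k<\mu_j$, so by the inductive hypothesis the source term is a polynomial in $t$ whose coefficients are explicit polynomial expressions in the components $W_1(u_0),\ldots,W_{k_j}(u_0)$ (the bilinearity of $B$ combines the lower-order multilinear expressions in the natural way). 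The crucial observation is that this source is a $t$-polynomial taking values in the smooth space $\mathcal E^\infty(\Omega)\cap V$, with an \emph{algebraic} (linear) dependence on the lower-order coefficients.

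For the base case $j=1$, one has $\mu_1=\Lambda_1$, and by Theorem \ref{AsRe}(iii) the function $W_{\Lambda_1}$ is constant in $t$ and equals $R_1 W_{\Lambda_1}(0;u_0)=W_1(u_0)$, so $\mP_1$ is simply the identity. For the inductive step, I would split the argument according to the three cases catalogued in Theorem \ref{AsRe}:
\begin{itemize}
\item If $\mu_j=\Lambda_j$ is non-resonant, then $W_{\mu_j}$ is the constant $W_j(u_0)$ by item (iii).
\item If $\mu_j\notin\sigma(A)$, then item (vi) constructs $W_{\mu_j}(t)$ by successive integrations of elementary functions applied to the known $W_{\mu_l}$'s with $\mu_l<\mu_j$; integration is a linear operation, so the algebraic form is preserved.
\item If $\mu_j=\Lambda_j$ is resonant, then by item (v) the non-leading projections $R_k W_{\Lambda_j}(t)$ and the $t$-dependent part of $R_j W_{\Lambda_j}(t)$ are similarly determined by the lower-order data through linear operations, while the constant part $R_j W_{\Lambda_j}(0)=W_j(u_0)$ supplies the missing ``initial datum''.
\end{itemize}
In every case, the linear operations (projections $R_k$, solving a first-order linear ODE with polynomial data, integration of explicit scalar exponentials/polynomials) preserve the polynomial-multilinear dependence on $\bigl(W_1(u_0),\ldots,W_{k_j}(u_0)\bigr)$, so one can assemble the desired $\mP_j$ by composition.

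The main obstacle is notational/bookkeeping rather than analytic: one must verify that when $B(W_{\mu_l}(t),W_{\mu_k}(t))$ is expanded in terms of the base variables $W_1(u_0),\ldots,W_{k_j}(u_0)$, it fits the prescribed multilinear template (with each component allowed to appear with a definite multiplicity determined by the Diophantine relation $\mu_l+\mu_k=\mu_j$ and its iterates back to the $\Lambda_i$'s). The cleanest way I know to handle this is to index each monomial by a tuple $(\alpha_1,\ldots,\alpha_{k_j})\in(\mathbb N\cup\{0\})^{k_j}$ satisfying $\sum_i \alpha_i\Lambda_i=\mu_j$ (as forced by matching the exponential factor $e^{-\nu\mu_j t}$), and to show by induction that only such tuples arise; multilinearity in the claimed sense then follows from the bilinearity of $B$ together with the linearity of the ODE-solving step. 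Finally, one checks that the construction depends only on $\sigma(A)$, on $B$, and on $\nu$, since these are the only structural inputs entering the recurrence.
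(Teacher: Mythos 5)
The paper states this lemma without proof, deferring to Lemma 7 of \cite{FS3}; your strong induction on $j$ through the recurrence ODE of Theorem \ref{AsRe}(iv) is exactly the construction the survey alludes to (see the remark that each $\mP_j$ ``can be explicitly constructed by induction'' and the explicit recursive formula for $q_j(t,\xi)$ in subsection \ref{further}, which is this same induction written out in the periodic normalization). Your argument is correct, and your bookkeeping of monomials by tuples with $\sum_i\alpha_i\Lambda_i=\mu_j$ is precisely the supplementary property \eqref{res1} recorded after the lemma.
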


\medskip
According to Lemma \ref{mPjlem},  each function $\mP_j$, for $j=1,2,3,\ldots,$ is a {\it polynomial}  with values in   $E^{\infty}\cap V$ (that can be explicitly constructed by induction), defined on $R_1H\oplus R_2H\oplus \ldots\oplus R_{k_j}H$.

\medskip
These polynomials have the following supplementary properties \cite[Lemma 7]{FS3}:   
   \begin{itemize}
\item  If $\mathcal M(x_1,x_2,..,x_{k_j})$ is a monomial in $\mP_j(x_1,\ldots,x_{k_j})$ of degree $m_1$, $\ldots$, $m_{k_j}$ in $x_1$,$\ldots$, $x_{k_j}$ respectively, then
   \beq \label{res1}
   m_1\Lambda_1+\ldots+m_{k_j}\Lambda_{k_j}=\mu_j.
   \eeq 
 \item   Furthermore, if $\mu_j$ is an eigenvalue $\Lambda_k,$  then 
   $$\mP_j(x_1,\ldots,x_{k_j})=x_k+\text{ higher order terms in }x_1,\ldots, x_{k-1}.$$
   \end{itemize}

   \medskip
With these polynomials $\mP_j$'s, we are ready to rewrite the NSE under the transformation $W(u(t))$.

   \begin{theorem}[Theorem 6 \cite{FS3}]
   Let $u_0\in \mathcal R$ and $u(t)=S(t)u_0$ be the corresponding global solution of the NSE.
    The  ($\mathcal S_A$-valued) function   $v(t)=W(u(t))$ satisfies the equation
   \beq \label{vnormal}\frac{dv(t)}{dt}+\nu Av(t)+\mathcal B (v(t))=0 \text{ in } \mathcal S_A,\eeq  
   where, for $v=v_1\oplus v_2\oplus\ldots\in\mathcal S_A$,   
   $$\mathcal B(v)=(\mathcal B_k(v))_{k=1}^\infty\in \mathcal S_A,\quad\text{with}\quad  \mathcal B_k(v) =\sum_{\mu_j+\mu_l=\Lambda_k} R_kB(\mP_l(v_1,\ldots,v_{k_l}),\mP_j(v_1,\ldots,v_{k_j})),$$
   where the polynomials $\mP_l$ and $\mP_j$ are defined as in Lemma \ref{mPjlem}. 
    \end{theorem}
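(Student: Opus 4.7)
The plan is to exploit uniqueness of the asymptotic expansion together with the semigroup property $S(s+t)u_0=S(s)u(t)$. The key intermediate fact is a \emph{shift identity}: for every index $j$ and every $t\ge 0$,
\[W_{\mu_j}(s;u(t))=e^{-\nu\mu_j t}\,W_{\mu_j}(s+t;u_0)\qquad\text{for all }s\in\R.\]
To prove it, I would write the asymptotic expansion of $u(s+t;u_0)$ in two ways: once by applying Theorem \ref{AsRe} to $u_0$ and shifting the time variable, and once by regarding $u(s+t;u_0)=S(s)u(t)$ as a regular solution with initial datum $u(t)\in\solnset$ and applying Theorem \ref{AsRe} to $u(t)$. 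Uniqueness of the polynomial coefficients in Definition \ref{expanddef} then forces the identity. Setting $s=0$ gives the crucial representation
\[v_k(t)=R_kW_{\Lambda_k}(0;u(t))=e^{-\nu\Lambda_k t}R_kW_{\Lambda_k}(t;u_0).\]

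Next, I would differentiate $v_k$ in $t$ and use the ODE provided by Theorem \ref{AsRe}(iv) for $W_{\Lambda_k}(t;u_0)$,
\[\frac{dW_{\Lambda_k}}{dt}+\nu(A-\Lambda_k)W_{\Lambda_k}+\sum_{\mu_l+\mu_j=\Lambda_k}B(W_{\mu_l},W_{\mu_j})=0,\]
with the understanding that if $\Lambda_k$ is non-resonant the sum is empty and $W_{\Lambda_k}$ is constant. Applying $R_k$ and invoking the spectral identity $R_k(A-\Lambda_k)=0$, the linear term vanishes, leaving only the bilinear sum on the right-hand side. Inserting this into
\[\frac{dv_k}{dt}=-\nu\Lambda_k v_k+e^{-\nu\Lambda_k t}R_k\frac{dW_{\Lambda_k}}{dt}(t;u_0)\]
yields
\[\frac{dv_k}{dt}+\nu\Lambda_k v_k=-e^{-\nu\Lambda_k t}\sum_{\mu_l+\mu_j=\Lambda_k}R_kB\bigl(W_{\mu_l}(t;u_0),W_{\mu_j}(t;u_0)\bigr).\]

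The final step converts the right-hand side into the stated form. The shift identity at $s=0$ combined with Lemma \ref{mPjlem} applied to the initial datum $u(t)$ gives
\[W_{\mu_l}(t;u_0)=e^{\nu\mu_l t}W_{\mu_l}(0;u(t))=e^{\nu\mu_l t}\mP_l(v_1(t),\ldots,v_{k_l}(t)),\]
and likewise for $\mu_j$. Bilinearity of $B$ and $\mu_l+\mu_j=\Lambda_k$ make the two exponentials combine to $e^{\nu\Lambda_k t}$, which exactly cancels $e^{-\nu\Lambda_k t}$. Since $v_k\in R_kH$ implies $Av_k=\Lambda_k v_k$, the component equation becomes $\frac{dv_k}{dt}+\nu Av_k+\mathcal{B}_k(v)=0$, which assembled over $k$ is \eqref{vnormal} in $\mathcal{S}_A$.

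The principal obstacle is the uniqueness step underpinning the shift identity: one must show that if $\sum_{n=1}^{N}(g_n-\tilde g_n)(s)e^{-ns}$ is $O(e^{-(N+\varepsilon)s})$ as $s\to\infty$ for all $N$, then each polynomial $g_n-\tilde g_n$ vanishes identically. This is classical once one multiplies by $e^{ns}$ and takes $s\to\infty$ successively, but deserves careful handling because the polynomial factors grow while the exponentials separate the scales only after the polynomial growth is absorbed by the next exponential gap. Every subsequent step is routine algebra using bilinearity of $B$, the spectral properties of $A$, and Lemma \ref{mPjlem}.
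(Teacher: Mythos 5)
Your argument is correct. The survey states this theorem without proof (deferring to \cite{FS3}), but your route --- deriving the shift identity $W_{\mu_j}(s;u(t))=e^{-\nu\mu_j t}W_{\mu_j}(s+t;u_0)$ from the semigroup property together with uniqueness of the polynomial coefficients in the asymptotic expansion, then applying $R_k$ to the ODE of Theorem \ref{AsRe}(iv) so that $R_k(A-\Lambda_k)$ drops out, and finally substituting $W_{\mu_l}(t;u_0)=e^{\nu\mu_l t}\mP_l(v_1(t),\ldots,v_{k_l}(t))$ via Lemma \ref{mPjlem} so the exponentials cancel on the resonant set $\mu_l+\mu_j=\Lambda_k$ --- is exactly the mechanism by which \cite{FS3} obtains the normal form, and you correctly identify and resolve the one delicate point, namely that a finite sum $\sum_{n\le N}(g_n-\tilde g_n)(s)e^{-\nu\mu_n s}$ which is $O(e^{-\nu(\mu_N+\varepsilon)s})$ forces each polynomial difference to vanish identically.
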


   \medskip
   From the relation \eqref{res1}, one can prove also that each monomial in $\mB(v)$ is resonant, i.e.,
   if $\mathcal M(v_1,v_2,..,v_{k_j})$ is a (nonzero)  monomial in $\mB$ of degree $m_1,\ldots, m_{k_j}$ in $v_1,\ldots,v_{k_j}$ respectively,
   and $\mathcal M\in R_jH$, then
   \beq \label{res2}
   m_1\Lambda_1+\ldots+m_{k_j}\Lambda_{k_j}=\Lambda_j.
   \eeq 
   
   \medskip
   Thus, $W$ transforms NSE \eqref{NS} to \eqref{vnormal}, which satisfies the resonance condition as in Theorem \ref{Dthm}. Therefore, we, again, call $W$ a normalization map, and equation \eqref{vnormal} a normal form of NSE. 
   
   \medskip
   Although the normal form \eqref{vnormal} is nonlinear in $\mathcal S _A,$ it can be solved by successive integration of an  infinite set of non-homogeneous \emph{linear differential equations} in $R_kH,$ $k=1,2,\ldots,$  each one having an already known non-homogeneous part.
   
   \begin{remark}
  Minea (\cite{Min})  shows that  this type of normalization, when applied to ODEs, is a normalization in the sense of  Bruno (\cite{Bruno}).
   \end{remark}
     
   \subsection{Further results in the 3D periodic case}\label{further}
   The papers \cite{FHOZ1, FHOZ2} aim to answer the following questions.
   \begin{itemize}
   \item When does the asymptotic expansion actually converge?
   \item In what natural normed spaces is the normal form a well-behaved infinite-dimensional  system of ODEs?
   \item What is the range of the normalization map?
   
   \end{itemize}

Partial answers to those questions are established in the 3D periodic case, namely:

\begin{itemize}
   \item In paper (\cite{FHOZ1}): Construction of a suitable Banach space $\mathcal S_A^\star\subset \mathcal S_A$ on which the normal form is a well-posed system \textit{near the origin}. The norm $\|\bar u\|_\star$ of $\bar u=(u_n)^{\infty}_{n=1} \in \mathcal S_A^\star$ is
   \beq\label{starnorm}\|u\|_\star=\sum_{n=1}^{\infty} \rho_n\|\nabla u_n\|_{L^2(\Omega)},\eeq 
   where $(\rho_n)_{n=1}^{\infty}$ is a sequence of positive weights.
   
   \item In paper (\cite{FHOZ2}): choice of a suitable set of weights $\rho_n$ such that the normalization map $W:\mathcal R \to \mathcal S_A^\star$ is continuous and such that the normal form of the NSE is well-posed in \textit{the entire space} $\mathcal S_A^\star.$
   \end{itemize}

   \medskip
We consider thus the periodic case in $\R^3$ with the standard setting \eqref{peridom}, \eqref{scalength}, \eqref{scalesig} for $n=3$.

For $N\in \N$, we denote by $R_N$ the projection from $H$ onto the eigenspace of $A$ corresponding to $N$ in case $N\in\sigma(A)$, and set $R_N=0$ in case $N\not\in \sigma(A)$.

We note that the definition of $R_N$ is only different from that in subsection \ref{basic} by the change of the index.
This aims to unify calculations and make them more  efficient in lengthy proofs.

\medskip
The definitions of polynomials $\mP_j$'s and the normal form \eqref{vnormal} can be expressed more explicitly as follows.   
We recall that the asymptotic expansion \eqref{expnot} for a regular solution $u$ of the NSE  with initial data  $u_0\in \mathcal R $ is 
  $$u(t)\sim \sum_{j=1}^\infty q_j(t)e^{-jt},
$$
  where   $q_j(t)$'s are  polynomials in $t$ with values in   $\mathcal V$, and are unique polynomial  solutions of the following ODEs
  $$q'_j(t)+(A-j)q_j(t)+\beta_j(t)=0,\quad t\in \R,\quad R_jq_j(0)=W_j(u_0),$$
  with
  $$ \beta_1(t)=0,\quad \beta_j(t)=\sum_{k+l=j}B(q_k(t),q_l(t)),\quad \text{for}\; j>1.$$
   
   For  $\xi=(\xi_n)_{n=1}^{\infty} \in \mathcal S_A$ arbitrary, the polynomial solutions of the preceding system   with initial conditions   $R_jq_j(0)=\xi_j$ are explicitly given by the recursive formula:
   $$q_j(t,\xi)=\xi_j-\int_0^t R_j\beta_j(\tau)d \tau+\sum_{n\geq 0} (-1)^{n+1} (A-j)^{-n-1} \frac{d^n}{dt^n} (I-R_j)\beta_j, \quad j\in \N,$$
   where
   $$ (A-j)^{-n-1} u=\sum_{|{\bf k}|^2\not=j}\frac{a_{{\bf k}}}{(|{\bf k}|^2-j)^{n+1}} e^{i{\bf k}\cdot x},$$
   for $u=\sum_{|{\bf k}|^2\not=j}a_{{\bf k}} e^{i{\bf k}\cdot x}\in (I-R_j)H.$

   The   $\mathcal S_A$-valued function $\xi(t)=(\xi_j(t))^{\infty}_{j=1}=(W_j(u(t))_{j=1}^\infty=W(u(t))$ satisfies the system of ODEs  :
   
   \begin{equation}\label{NF}
  \left\lbrace
    \begin{aligned}
     &\frac{d\xi_1(t)}{dt}+A\xi_1(t)
	=0,\vspace{1mm}\\
     &\frac{d\xi_j(t)}{dt}+A\xi_j(t)+\sum_{k+l=j}R_jB(\mP_k(\xi(t)),\mP_l(\xi(t)))=0,\quad j>1.
     \end{aligned}\right.
     \end{equation}
     
     \bigskip
     Above, $\mathcal P_j(\xi)=q_j(0,\xi)$ for $\xi\in \mathcal S_A$ and $j\ge 1$. Then each  function   $\mathcal P_j(\xi)$,  for $\xi=(\xi_n)^\infty_{n=1},$ is a $\mathcal V$-valued polynomial in the variables $\xi_1,\xi_2,\ldots,\xi_j,$ each of which belongs to an finite dimensional space.
      For instance, 
      $$\mathcal P_1(\xi)=\xi_1,\quad  \mathcal P_2(\xi)=\xi_2-(A-2)^{-1}(I-R_2)B(\xi_1,\xi_1).$$

 We define $\mathcal B=(\mathcal B_j)^\infty_{j=1}$ where
   
   $$\mathcal B_1(\xi)=0,\quad \text{and } \mathcal B_j(\xi)=\sum_{k+l=j}R_jB(\mathcal P_k(\xi),\mathcal P_l(\xi))\quad\text{for } j>1.$$

We rewrite the system \eqref{NF} in a vectorial form in $\mathcal S_A$ as
  \begin{equation}\label{NFSA}
\frac{d\xi}{dt}+A\xi+\mathcal B(\xi)=0.   
  \end{equation}

Ones can verify that each monomial in $\mathcal B$ satisfies the resonance condition, see \eqref{res2}. 
Therefore, \eqref{NFSA} is a normal form  of  NSE in 
$\mathcal S_A$ under the transformation 
\beq \label{xiwu}
\xi=W(u).\eeq

The solution of the normal form \eqref{NFSA} with initial data $\xi^0=(\xi_n^0)_{n=1}^\infty\in S_A$ is precisely 
$$(R_nq_n(t,\xi^0)e^{-nt})_{n=1}^\infty.$$
We denote this solution by $S_{\rm normal}(t)\xi^0$. 

\medskip
Next, we investigate the convergence of the asymptotic expansion. We introduce and make use  of the following construction of regular solutions.
It is motivated by the asymptotic expansion itself.

We decompose the initial data $u^0$ in $V$ as 
\beq \label{inisplit}
u^0=\sum_{n=1}^{\infty} u^0_n. 
\eeq 
We find the solution $u(t)$ of the form
\beq \label{solsplit}
u(t)=\sum_{n=1}^{\infty} u_n(t),
\eeq 
where for each $n$, 
\beq\label{EQNN}
        \frac{du_n(t)}{dt}+A u_n(t) + B_n(t)=0,\quad t>0,
\eeq
with initial condition 
\beq\label{UN0COND}u_n(0)=u_n^0,\eeq
 where
\[ B_1(t)\equiv 0,\quad B_n(t)=\sum_{j+k=n} B(u_j(t),u_k(t)) \text{ for } n>1.\]

System \eqref{EQNN} will be called the extended NSE. 
We denote by $S_{\rm ext}(t)$ the semigroup generated by solutions of this system. 

\medskip
It turns out that such a construction \eqref{inisplit}--\eqref{UN0COND}, indeed, produces regular solutions of the form \eqref{solsplit} for NSE with the initial condition \eqref{inisplit}. The following existence theorem is a special case of Corollary 3.5 \cite{FHOZ1} with specific parameter $\rho_0>1=\rho$.

\begin{theorem}[Corollary 3.5 \cite{FHOZ1}]\label{familysolncor}
Let $(u_n^0)_{n=1}^\infty$ be a sequence in $V$ such that
\beqs \limsup_{n\to\infty}\norm{u_n^0}^{1/n} <1.\eeqs
Let $(u_n(t))_{n=1}^\infty$ be the solutions to \eqref{EQNN} and \eqref{UN0COND}, then  $u(t)=\sum_{n=1}^\infty  u_n(t)$ is the regular solution to the NSE with initial data $u^0=\sum_{n=1}^\infty u_n^0$, for $t\in[0,T)$, for some $T>0$.
\end{theorem}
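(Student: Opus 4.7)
The plan is a majorant-series argument in the spirit of Cauchy--Kowalevski, exploiting the fact that the extended system \eqref{EQNN}--\eqref{UN0COND} is precisely the NSE written as a formal power series in an auxiliary parameter: if one sets $U(t,z)=\sum_{n\geq 1}u_n(t)z^n$, then \eqref{EQNN} is equivalent, coefficient by coefficient in $z$, to
\beqs
\partial_t U+AU+B(U,U)=0,\qquad U(0,z)=\sum_{n\geq 1}u_n^0 z^n.
\eeqs
Since $u(t)=U(t,1)$, it suffices to show that this $z$-series has radius of convergence $\geq 1$ (in a suitable function-space sense) for all $t$ in some interval $[0,T)$.

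The inductive construction of the $u_n$ is essentially automatic. The equation for $u_1$ is the linear homogeneous Stokes equation, so $u_1(t)=e^{-tA}u_1^0$, smooth in $V$ on $[0,\infty)$. Assuming $u_1,\ldots,u_{n-1}$ constructed with enough regularity, the forcing $B_n(t)=\sum_{j+k=n}B(u_j(t),u_k(t))$ is a well-defined time-dependent element of $V'$, and $u_n$ is obtained via the Duhamel formula
\beqs
u_n(t)=e^{-tA}u_n^0-\int_0^t e^{-(t-s)A}B_n(s)\,ds,
\eeqs
which is regular in $V$.

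The chief obstacle is producing the majorant. Setting $\alpha_n(t)=\norm{u_n(t)}$, one combines the Stokes semigroup estimates (the decay $\norm{e^{-tA}v}\leq e^{-t}\norm{v}$, which follows from $\Lambda_1=1$, and the parabolic smoothing $\norm{e^{-\tau A}g}\leq c\,\tau^{-1/2}|g|$) with the three-dimensional bilinear estimate for $B$ to derive, by induction on $n$, a recursive inequality of the schematic form
\beqs
\alpha_n(t)\leq \norm{u_n^0}e^{-t}+c_0\int_0^t K(t-s)\sum_{j+k=n}\alpha_j(s)\alpha_k(s)\,ds,
\eeqs
where $K$ is a locally integrable kernel (dictated by the smoothing factor needed to compensate for the derivative loss in $B$). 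Multiplying by $z^n$ and summing, the generating function $\Phi(t,z)=\sum_{n\geq 1}\alpha_n(t)z^n$ then satisfies
\beqs
\Phi(t,z)\leq \Phi_0(z)e^{-t}+c_0\int_0^t K(t-s)\Phi(s,z)^2\,ds,
\eeqs
with $\Phi_0(z)=\sum \norm{u_n^0}z^n$. The hypothesis $\limsup\norm{u_n^0}^{1/n}<1$ means exactly that $\Phi_0$ is analytic on a disk of radius strictly greater than $1$, so in particular $\Phi_0(1)<\infty$. A standard Volterra fixed-point/comparison argument then produces a finite majorant $\Phi(t,z)$ on a rectangle $\{|z|\leq r\}\times[0,T]$ with $r>1$ and $T>0$. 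This yields $\sum_n\norm{u_n(t)}\leq \Phi(t,1)<\infty$ on $[0,T)$, so $u(t):=\sum u_n(t)$ converges absolutely in $V$; summing \eqref{EQNN} over $n$ and using
\beqs
\sum_{n\geq 2}B_n(t)=\sum_{j,k\geq 1}B(u_j(t),u_k(t))=B(u(t),u(t)),
\eeqs
one identifies $u$ as a regular NSE solution in $V$ with initial datum $u^0=\sum u_n^0$ on $[0,T)$. The main difficulty is keeping the recursion closed: the naive $V\to V$ estimate fails in 3D due to the derivative loss in $B$, so one must insert the smoothing kernel $K$ (equivalently, work with a time-weighted norm) to obtain a genuine convolution-type inequality that a scalar generating function can absorb.
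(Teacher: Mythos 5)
The survey itself does not reproduce a proof of this statement; it only cites Corollary 3.5 of \cite{FHOZ1}, where the result is obtained by a priori estimates for the system \eqref{EQNN}--\eqref{UN0COND} in weighted norms of the form $\sum_n \rho_0^n\norm{u_n(t)}$ with $\rho_0>1$. Your generating-function argument is essentially the same idea in different clothing: evaluating $\Phi(t,z)$ at a fixed $z=r\in(1,\rho_0)$ \emph{is} the weighted norm, and the quadratic Volterra inequality you derive is the weighted analogue of the a priori estimate in \cite{FHOZ1}. The main implementational difference is that you work with the mild (Duhamel) formulation and the smoothing kernel $K(\tau)\sim\tau^{-1/2}e^{-\tau}$ paired with the three-dimensional bilinear estimate $|A^{-1/2}B(v,w)|\le c\norm{v}\,\norm{w}$, whereas \cite{FHOZ1} runs energy-type differential inequalities for $\norm{u_n(t)}$; both close the recursion and both are standard. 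Two points deserve more care than your sketch gives them. First, the comparison step cannot literally be applied to $\Phi(t,z)$, which is not yet known to be finite; it must be run as an induction on $n$, producing a majorant sequence $M_n$ with $M_n\le \norm{u_n^0}+c_0\norm{K}_{L^1(0,T)}\sum_{j+k=n}M_jM_k$ and then showing $\sum M_n r^n<\infty$ for small $T$ via the quadratic generating-function equation. Second, the identification of $u=\sum_n u_n$ as \emph{the} regular solution uses the absolute convergence of the double series $\sum_{j,k}B(u_j,u_k)$ in $V'$ (which your bound supplies) together with uniqueness of local strong solutions with data in $V$; with these two points made explicit the argument is complete.
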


We now make the connections between the solutions of the extended NSE with the asymptotic expansions of solutions of NSE.

\medskip
If a regular solution $u(t)$ has the expansion $\sum_{n=1}^\infty W_n(t,u^0) e^{-nt}$, then formally we wish for
\beqs
u^0=\sum_{n=1}^\infty W_n(0,u^0).
\eeqs

Therefore, we set $u^0_n=W_n(0,u^0)$ in the extended NSE. 
Then solutions $u_n(t)$ of the extended NSE are exactly $W_n(t,u^0)e^{-nt}$. Hence, conclusion in Theorem \ref{familysolncor} on $u_n(t)$, helps us make conclusion on $\sum_{n=1}^\infty W_n(t,u^0)e^{-nt}$.

First, we have a small initial data result.

\begin{theorem}[Proposition 5.9 \cite{FHOZ1}]\label{smallexpand}
There exists $\varepsilon_0>0$ such that if
\beq\label{cond1} \sum_{n=1}^\infty\norm{W_n(0,u^0)}< \varepsilon_0,
\eeq 
then $u(t,u^0)=\sum_{n=1}^\infty W_n(t,u^0)e^{-nt}$ is the regular solution to the NSE for all $t>0$.
\end{theorem}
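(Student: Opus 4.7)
The plan is to realize the asserted series as the sum of solutions of the extended NSE \eqref{EQNN}--\eqref{UN0COND} with initial data $u_n^0 := W_n(0, u^0)$, and then to close a global a priori estimate using the smallness of $\sum_n \|u_n^0\|$.  First, from the recursive construction of the polynomials $q_n(t, \xi)$ in Section \ref{further} and Lemma \ref{mPjlem}, the functions $u_n(t) := W_n(t, u^0) e^{-nt}$ solve \eqref{EQNN} with $u_n(0) = u_n^0$.  The theorem then reduces to showing that $v(t) := \sum_n u_n(t)$ converges in $V$ for every $t \ge 0$, satisfies the NSE, and has initial value $u^0$; uniqueness of regular solutions will then identify $v$ with $u(\cdot, u^0)$.

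The core estimate is a global bound on $\Phi(t) := \sum_n \|u_n(t)\|_V$.  Writing \eqref{EQNN} in Duhamel form
\[
u_n(t) = e^{-At} u_n^0 - \int_0^t e^{-A(t-s)} \sum_{j+k=n} B(u_j(s), u_k(s))\, ds,
\]
and combining the decay $\|e^{-A\tau}\|_{V \to V} \le e^{-\tau}$ (which follows from $A \ge I$) with the standard analytic-semigroup smoothing of $e^{-A\tau}$ and the 3D bilinear estimate for $B$, I would derive
\[
\|u_n(t)\|_V \le e^{-t} \|u_n^0\|_V + C \int_0^t K(t-s) \sum_{j+k=n} \|u_j(s)\|_V \|u_k(s)\|_V \, ds,
\]
for an integrable, exponentially decaying kernel $K$.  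Summing over $n$ and collapsing the Cauchy convolution into $\Phi(s)^2$ gives
\[
\Phi(t) \le e^{-t} \Phi(0) + C \int_0^t K(t-s) \Phi(s)^2 \, ds,
\]
and a standard continuity/bootstrap argument then shows that, for $\varepsilon_0$ small enough (depending on $C$ and $\|K\|_{L^1}$), the assumption $\Phi(0) < \varepsilon_0$ forces $\Phi(t) \le 2 \Phi(0) e^{-t/2}$ for all $t \ge 0$.

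With uniform $V$-convergence of $v = \sum_n u_n$ in hand, term-by-term addition of \eqref{EQNN} together with the Cauchy product identity $B(v, v) = \sum_n \sum_{j+k=n} B(u_j, u_k)$ shows that $v$ is a regular NSE solution on $[0, \infty)$.  The main obstacle is the initial-data identity $u^0 = \sum_n W_n(0, u^0)$, which is needed to conclude $v(0) = u^0$ and thereby identify $v$ with $u(\cdot, u^0)$.  This identity expresses that the formal inverse $\xi \mapsto \sum_n \mathcal P_n(\xi)$ of the normalization map $W$ converges to $u^0$ on the $\ell^1$-ball $\{\sum_n \|W_n(0, u^0)\| < \varepsilon_0\}$; its proof parallels the $\Phi(t)$ estimate above but applied at $t = 0$ to the polynomials $\mathcal P_n(W(u^0))$.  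It is precisely this step---not the application of Theorem \ref{familysolncor}, whose root-test hypothesis is weaker than summability---that genuinely uses the full strength of the $\ell^1$-smallness assumption.
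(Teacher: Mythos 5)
Your overall strategy is the one the paper (and \cite{FHOZ1}) uses: feed $u_n^0=W_n(0,u^0)$ into the extended system \eqref{EQNN}--\eqref{UN0COND}, observe that its solutions are exactly $W_n(t,u^0)e^{-nt}$, and close a global summability estimate for $\sum_n\|u_n(t)\|$ by a Duhamel--bilinear--smallness argument; that part of your proposal is sound (note, incidentally, that each $u_n$ exists globally for free since \eqref{EQNN} is lower-triangular and linear in $u_n$, so the estimate is only needed for convergence of the series).

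The genuine gap is in the last step, and you have located it but not closed it. To identify $v(t):=\sum_n W_n(t,u^0)e^{-nt}$ with $u(t,u^0)$ you propose to prove $u^0=\sum_n W_n(0,u^0)$ by ``the $\Phi(t)$ estimate applied at $t=0$.'' That estimate can only show that the series $\sum_n W_n(0,u^0)$ converges absolutely in $V$; it gives no information about \emph{what} it converges to, so it cannot produce the identity. The identity is in fact a \emph{consequence} of the identification, not an input to it. The correct route is: (a) sharpen the decay estimate so that the tail satisfies $\big\|\sum_{n>N}u_n(t)\big\|=O(e^{-(N+1-\epsilon)t})$ (your uniform bound $\Phi(t)\le 2\Phi(0)e^{-t/2}$ is too weak for this; one needs essentially $e^{-nt}$ decay of $u_n$ up to polynomial factors, which the triangular structure of \eqref{EQNN} provides); (b) conclude that $v$ is a regular solution whose asymptotic expansion is $\sum_n W_n(t,u^0)e^{-nt}$, i.e.\ the same as that of $u(\cdot,u^0)$; (c) invoke either the injectivity of the normalization map $W$ (Theorem \ref{norm2}) to get $v(0)=u^0$, or the fact that the difference of two regular solutions of the NSE cannot decay faster than every exponential unless it vanishes identically (the log-convexity/Dirichlet-quotient argument underlying Theorem \ref{Dirichlet} and the proof of Theorem \ref{asympsoln}). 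Only then does forward uniqueness give $v=u(\cdot,u^0)$, and $u^0=\sum_n W_n(0,u^0)$ drops out as a corollary. Without step (c) your argument produces \emph{some} global regular solution with initial value $\sum_n W_n(0,u^0)$, but not the asserted representation of $u(\cdot,u^0)$ itself.
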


Second, we have a large time result for large initial data.

\begin{theorem}[Theorem 5.10 \cite{FHOZ1}]\label{asympsoln}
Suppose 
\beq \label{cond2}\limsup_{n\to\infty}\norm{W_n(0,u^0)}^{1/n}< \infty.
\eeq 
Then there is $T>0$ such that \[v(t)=\sum_{n=1}^\infty  W_n(t,u^0)e^{-nt}\] is absolutely convergent in $V,$ uniformly in $t\in [T,\infty)$, $\sum_{n=1}^\infty W_n(t,u^0)e^{-nt}$ is the asymptotic expansion of $v(t)$, and 
\[u(t,u^0)=v(t) \textit{ for all } t\in [T,\infty).\]
\end{theorem}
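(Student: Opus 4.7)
The plan is to reduce the convergence claim to the small-data regime of Theorem~\ref{familysolncor} (equivalently Theorem~\ref{smallexpand}) via a time-shift. The starting observation is that if one sets $u_n(t):=W_n(t,u^0)e^{-nt}$, a direct differentiation using the ODE for $W_n$ stated in Theorem~\ref{AsRe}(iv) shows that $(u_n)_{n\ge1}$ is precisely the solution of the extended NSE system \eqref{EQNN} with initial data $u_n(0)=W_n(0,u^0)$. The game is therefore to choose a time $T>0$ large enough that $(u_n(T))_{n\ge1}$ falls into the range of validity of Theorem~\ref{familysolncor}, i.e.\ $\limsup_n\|u_n(T)\|^{1/n}<1$, and then to transport the conclusion back to $u(\cdot,u^0)$ via uniqueness.

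The crux is a growth estimate of the form
\[
\|W_n(t,u^0)\|\le C_0\,M^n(1+|t|)^{n-1},\qquad n\ge1,\ t\in\R,
\]
for constants $C_0,M>0$ depending on $u^0$ only through the assumption $\limsup_n\|W_n(0,u^0)\|^{1/n}<\infty$. I would prove this by induction on $n$ using the explicit recursive formula for $q_n(t,\xi)$ recalled in subsection~\ref{further}. The forcing $\beta_n(t)=\sum_{k+l=n}B(W_k(t),W_l(t))$ inherits the desired exponential--polynomial bound from the induction hypothesis via the bilinear estimate for $B$; the polynomial integration $\int_0^t R_n\beta_n(\tau)d\tau$ produces the extra factor $(1+|t|)$; and the resolvent terms $(A-n)^{-m-1}\frac{d^m}{dt^m}(I-R_n)\beta_n$ form a \emph{finite} sum (since $\beta_n$ is a polynomial in $t$) controlled uniformly through the spectral gap of $A$. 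The degree bound $\deg W_n\le n-1$ from Theorem~\ref{AsRe}(ii) guarantees the exponent on $(1+|t|)$ is correct.

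Granted the estimate, the calculus inequality $(1+t)^{n-1}e^{-nt}\le(1+T)^{n-1}e^{-nT}$ for $t\ge T$ (its derivative is negative for all $t\ge 0$) yields
\[
\sup_{t\ge T}\sum_{n=1}^\infty\|W_n(t,u^0)\|\,e^{-nt}\le \frac{C_0}{1+T}\sum_{n=1}^\infty\bigl(M(1+T)e^{-T}\bigr)^n,
\]
which is finite and, for $T$ sufficiently large, strictly smaller than the constant $\varepsilon_0$ of Theorem~\ref{smallexpand}. Fixing such a $T$, the series defining $v(t)$ converges absolutely in $V$ uniformly on $[T,\infty)$. Applying Theorem~\ref{familysolncor} to the shifted family $s\mapsto u_n(T+s)$ identifies $v(T+s)=\sum_n u_n(T+s)$ as the regular NSE solution with initial datum $v(T)$. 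To conclude $v=u(\cdot,u^0)$, use the asymptotic expansion from Theorem~\ref{AsRe}: for each $N$,
\[
\Bigl\|u(T,u^0)-\sum_{n=1}^N W_n(T,u^0)e^{-nT}\Bigr\|=O\bigl(e^{-(N+\varepsilon_N)T}\bigr),
\]
and letting $N\to\infty$ (the series being convergent) gives $u(T,u^0)=v(T)$. Uniqueness of regular solutions then forces $u(t,u^0)=v(t)$ on $[T,\infty)$, and the asymptotic-expansion property of $v$ is inherited from that of $u$ or, equivalently, read off from the tail bound $\sum_{n>N}\|W_n(t,u^0)\|e^{-nt}\le C(1+t)^{N}e^{-(N+1)t}=O(e^{-(N+\varepsilon)t})$ for any $\varepsilon\in(0,1)$.

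The main obstacle is the growth estimate of the second paragraph: the recursion that defines $W_n$ interlocks polynomial integration with inverses of the shifted Stokes operator $(A-n)^{-m-1}$, and the induction must simultaneously propagate a geometric factor $M^n$ and the polynomial power $(1+|t|)^{n-1}$ without slippage. Working directly in the graph norm of $A$ — or, as in \cite{HM1}, in the Gevrey norms $|\cdot|_{\alpha,\sigma}$ — is the cleanest way to make the bilinear estimate for $B$ tight enough to close the induction. Everything beyond this estimate is bookkeeping around Theorem~\ref{familysolncor}, the asymptotic-expansion Theorem~\ref{AsRe}, and uniqueness of regular NSE solutions.
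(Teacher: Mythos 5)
Your overall architecture --- recognizing $u_n(t)=W_n(t,u^0)e^{-nt}$ as the solution of the extended system \eqref{EQNN} with data $W_n(0,u^0)$, waiting until time $T$ so that the family is small, invoking Theorems \ref{familysolncor}/\ref{smallexpand}, and closing with uniqueness --- is the same as the route the paper indicates via \cite{FHOZ1}. But two steps have genuine gaps. The first is the growth estimate $\norm{W_n(t,u^0)}\le C_0M^n(1+|t|)^{n-1}$, which you propose to prove by induction through the recursive formula for $q_n(t,\xi)$. That induction does not close as sketched: the resolvent part $\sum_{m\ge 0}(-1)^{m+1}(A-n)^{-m-1}\frac{d^m}{dt^m}(I-R_n)\beta_n$ takes up to $n-2$ derivatives of a polynomial of degree up to $n-2$, and $\frac{d^m}{dt^m}t^{i+m}=\frac{(i+m)!}{i!}t^i$ injects combinatorial factors as large as $(n-2)!$ into the coefficients; the spectral gap only bounds the operator norm of $(A-n)^{-m-1}$ and does nothing about these. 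An induction hypothesis of the form $\sup_t\norm{q_n(t)}/(1+|t|)^{n-1}\le C_0M^n$ therefore degrades factorially, not geometrically. The estimate is true, but to get it from the recursion you must track the polynomial coefficients with factorial weights ($\norm{q_{n,i}}\lesssim M^n2^i/i!$, which does propagate); alternatively --- and this is what the cited proof actually does --- observe that hypothesis \eqref{cond2} is exactly a geometric bound on the extended-NSE initial data $u_n(0)=W_n(0,u^0)$, so the general-parameter version of Corollary 3.5 of \cite{FHOZ1} (energy estimates for \eqref{EQNN} with $\rho_0>\rho$ arbitrary) already yields $\norm{u_n(t)}\le d_ne^{-nt}$ with $\limsup d_n^{1/n}<\infty$ for $t\ge T$, and no recursion is needed.

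The second gap is the identification $u(T,u^0)=v(T)$. You let $N\to\infty$ at fixed $T$ in the remainder bound $\norm{u(T)-\sum_{n\le N}W_n(T)e^{-nT}}=O(e^{-(N+\epsilon_N)T})$, but that bound is asymptotic in $T$ for each fixed $N$, with constants $C_N$ that Theorem \ref{AsRe} does not control in $N$; at a fixed $T$ the quantity $C_Ne^{-(N+\epsilon_N)T}$ need not tend to zero. What the two expansions actually give is that $w=u-v$ decays faster than every exponential. Since $u$ and $v$ are both regular NSE solutions on $[T,\infty)$, one must then invoke the backward-uniqueness/Dirichlet-quotient rigidity for the difference equation $w'+Aw+B(u,w)+B(w,v)=0$ (the machinery of Section \ref{secDirichlet} and Remark \ref{backrmk}) to conclude $w\equiv0$. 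Without that ingredient, or a geometric control of the $C_N$ (which is again the hard estimate), the final equality is not established.
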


Note in Theorem \ref{asympsoln} that it is not known whether the sum $\sum_{n=1}^\infty  W_n(t,u^0)e^{-nt}$ converges to a solution in short time.

\medskip 
Although the conclusions in Theorems \ref{smallexpand} and \ref{asympsoln} are satisfactory, it is not known whether the condition \eqref{cond1} or \eqref{cond2} holds true for a non-zero $u^0$. At the moment, we do not know whether
$\sum_{n=1}^\infty  W_n(0,u^0)$ and $\sum_{n=1}^\infty W_n(t,u^0)$ converge in $V$, i.e., with respect to the norm $\|\cdot\|$.
However, we hope to obtain some convergence in weaker norms.
Therefore, we study, in the following, the asymptotic expansions with a different approach, which uses suitable weighted normed spaces.

\medskip
Let $V^\infty=V\oplus V\oplus V\oplus\ldots$. Define 
$$W(t,\cdot):u\in\solnset\mapsto (W_n(t,u)e^{-nt})_{n=1}^\infty\in V^\infty,$$
$$Q(t,\cdot):\bar\xi\in S_A\mapsto (q_n(t,\bar\xi)e^{-nt})_{n=1}^\infty\in V^\infty.$$

We now proceed to the construction of the normed spaces.

\begin{definition}[Fast decaying weights]
Let $(\tilde\kappa_n)_{n=2}^\infty$ be a fixed sequence of real numbers in the interval $(0,1]$
satisfying
\beqs\label{kappaprime} \lim_{n\to\infty}(\tilde\kappa_n)^{1/2^n}=0.\eeqs
We define the sequence of positive weights $(\rho_n)_{n=1}^\infty$ by
\beqs\label{rhon-finaldef} \rho_1=1, \quad 
\rho_n=\tilde\kappa_n\gamma_n \rho_{n-1}^2,\ n>1,\eeqs
where $\gamma_n\in(0,1]$ are known and decrease to zero faster than $n^{-n}$.
\end{definition}

For $\bar u=(u_n)_{n=1}^\infty\in V^\infty$, let $\| \bar u \|_\star $ be defined by the formula \eqref{starnorm}.
Define the spaces
$$ V^{\star}=\{\,\bar u\in V^\infty:\norm{\bar u}_{\star}<\infty\,\}\quad\text{and}\quad
\SAstar=\mathcal S_A\cap V^\star.$$

Clearly $(V^\star,\| \cdot \|_\star) $ and $(\mathcal S_A^\star,\| \cdot \|_\star )$ are Banach spaces.

It turns out that the extended NSE is well-posed in the space $V^\star$.

\begin{theorem}If $\bar u^0\in V^\star$, then $S_{\rm ext}(t)\bar u^0\in V^\star$ for all $t>0$.
More precisely,
\beqs \norm{S_{\rm ext}(t)\bar u^0}_\star \leq Me^{-t},\ t>0,\eeqs
where
$M>0$ depends on $\rho_n$, $\kappa_n$ and $\norm{\bar u^0}_\star$.
\end{theorem}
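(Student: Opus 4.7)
The plan is to construct the components $u_n(t)$ inductively on $n$, to establish pointwise-in-time decay bounds $\|u_n(t)\|\le c_n e^{-t}$ via energy estimates, and then to control the weighted sum $\sum_n\rho_n c_n$ using the super-exponential decay built into the weights. For $n=1$, the equation reduces to $du_1/dt+Au_1=0$, whose explicit solution $u_1(t)=e^{-At}u_1^0$ satisfies $\|u_1(t)\|\le e^{-t}\|u_1^0\|$ thanks to $\Lambda_1=1$; thus we may take $c_1=\|u_1^0\|$.

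For $n\ge 2$, proceed by induction. Assuming the bounds $\|u_j(t)\|\le c_j e^{-t}$ for $j<n$, the forcing $B_n(s)=\sum_{j+k=n}B(u_j(s),u_k(s))$ is a given function of time with values in $V'$ obeying $\|B_n(s)\|_{V'}\le C\sum_{j+k=n}c_jc_k\,e^{-2s}$, by the standard 3D bilinear bound $\|B(u,v)\|_{V'}\le c\|u\|\,\|v\|$ (from $V\hookrightarrow L^6$). The Cauchy problem $du_n/dt+Au_n=-B_n(t)$, $u_n(0)=u_n^0$, is linear in the unknown $u_n$ and has a unique global solution in $C([0,\infty);V)\cap L^2_{\rm loc}(0,\infty;D(A))$. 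Testing the equation first against $u_n$ to get an $H$-bound and then bootstrapping to a $V$-bound via the parabolic smoothing estimate of the Stokes semigroup from $t/2$ to $t$, together with $\Lambda_1=1$, yields
\[\|u_n(t)\|\le c_n e^{-t},\qquad c_n:=\|u_n^0\|+C_0\sum_{j+k=n}c_jc_k.\]

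The heart of the argument is to show $\sum_n\rho_n c_n<\infty$. Setting $a_n=\rho_n c_n$, the recursion becomes
\[a_n\le \rho_n\|u_n^0\|+C_0\sum_{j+k=n}\frac{\rho_n}{\rho_j\rho_k}\,a_ja_k.\]
The decisive combinatorial bound is $\rho_n/(\rho_j\rho_k)\le \tilde\kappa_n\gamma_n$ for every admissible splitting $j+k=n$, $j,k\ge 1$; this follows from $\rho_n=\tilde\kappa_n\gamma_n\rho_{n-1}^2$ together with monotonicity of $(\rho_m)$, which forces $\rho_j\rho_k\ge\rho_{n-1}^2$. Because $(\tilde\kappa_n)^{1/2^n}\to 0$, a majorant-series comparison shows that the quadratic recursion $a_n\le A_n+\tilde\kappa_n\sum_{j+k=n}a_ja_k$ (with $A_n=\rho_n\|u_n^0\|$) admits a summable solution $\sum_n a_n\le M$, where $M$ depends only on $\|\bar u^0\|_\star$, $(\rho_n)$ and $(\tilde\kappa_n)$. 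Summing the componentwise bounds then gives
\[\|S_{\rm ext}(t)\bar u^0\|_\star=\sum_{n\ge 1}\rho_n\|u_n(t)\|\le e^{-t}\sum_{n\ge 1}\rho_n c_n\le Me^{-t}.\]

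The main obstacle is the combinatorial step. The recursion defining $c_n$ produces Catalan-type growth (for $\bar u^0$ supported on the single index $n=1$, $c_n$ is comparable to a Catalan number times $C^n\|u_1^0\|^{2n-1}$), so absolute convergence of the weighted sum demands double-exponential decay of $\rho_n$; this is precisely the role of the defining relation $\rho_n=\tilde\kappa_n\gamma_n\rho_{n-1}^2$ together with $(\tilde\kappa_n)^{1/2^n}\to 0$. Without such fast decay, the result would only hold under a smallness assumption on $\bar u^0$; the careful choice of weights absorbs the iterated convolution growth and makes the estimate unconditional on the size of the data.
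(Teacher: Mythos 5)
Your overall architecture --- solve the lower-triangular system component by component, prove $\|u_n(t)\|\le c_ne^{-t}$ with $c_n=\|u_n^0\|+C_0\sum_{j+k=n}c_jc_k$, and then tame the Catalan-type growth of $(c_n)$ by the weights --- is a genuinely different, and far more elementary, route than the one the paper indicates: the survey defers this proof to \cite{FHOZ1,FHOZ2}, where the argument runs through complexification of the extended NSE in time, Gevrey-class bounds on a large complex domain of the form \eqref{Dto}--\eqref{D}, a conformal map onto a half-plane, and Phragm\'en--Lindel\"of estimates. Your combinatorial core is correct and captures exactly why the weights are built the way they are: since $(\rho_m)$ is decreasing with $\rho_1=1$, any splitting $j+k=n$ gives $\rho_j\rho_k\ge\rho_{n-1}^2$, hence $\rho_n/(\rho_j\rho_k)\le\tilde\kappa_n\gamma_n$; the resulting recursion for $T_n=\sum_{j\le n}a_j$, namely $T_n\le T_{n-1}+A_n+C_0\tilde\kappa_n\gamma_n T_{n-1}^2$, produces at worst iterated-squaring growth $T_n\le e^{K2^n}$ with $K$ depending on $\|\bar u^0\|_\star$, and this is precisely what $(\tilde\kappa_n)^{1/2^n}\to 0$ annihilates, so no smallness of the data is needed. (A minor slip: for data supported at $n=1$ the recursion gives $c_n=\mathrm{Cat}_{n-1}\,C_0^{n-1}\|u_1^0\|^{n}$, not $\|u_1^0\|^{2n-1}$; this changes nothing.)

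The one step that fails as written is the PDE estimate. You bound the forcing only in $V'$, via $\|B_n(s)\|_{V'}\le C\sum_{j+k=n} c_jc_k\,e^{-2s}$, and then invoke parabolic smoothing to reach a pointwise-in-time $V$-bound on $u_n$. But smoothing from $V'$ to $V$ gains a full power of $A$ and costs $\tau^{-1}$, so the Duhamel integral $\int_0^t\|e^{-A(t-s)}B_n(s)\|\,ds$ has a non-integrable singularity at $s=t$; restarting at $t/2$ does not help, since the singularity sits at the upper endpoint rather than at $s=0$. The repair is routine but must be made explicit: use the slightly sharper bilinear estimate $|\inprod{B(u,v)}{w}|\le\|u\|_{L^4}\|\nabla v\|_{L^2}\|w\|_{L^4}\le c\|u\|\,\|v\|\,|A^{3/8}w|$, so that $B_n(s)$ lies in $D(A^{-3/8})$ with the same coefficient $c\sum c_jc_k e^{-2s}$; the smoothing cost then becomes $\tau^{-7/8}$, which is integrable, and after splitting off the finite-dimensional lowest eigenspace (so as to retain the full rate $e^{-t}$ rather than $e^{-(1-\epsilon)t}$) the induction closes with exactly the constant $c_n$ you wrote. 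Relatedly, testing against $u_n$ and applying Young's inequality to $\inprod{B_n}{u_n}$ only yields the rate $e^{-t/2}$ for $|u_n(t)|$; the full rate must come from variation of constants, not from the plain energy inequality.
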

\begin{theorem}\label{Sextconti} For each $t\in[0,\infty)$, $S_{\rm ext}(t)$ is continuous from $V^\star$ to $V^\star$.
More precisely, for any $\bar u^0\in V^\star$ and $\varepsilon>0$, there is $\delta>0$ such that
$$\norm{S_{\rm ext}(t)\bar v^0-S_{\rm ext}(t)\bar u^0}_\star < \varepsilon e^{-t},$$
for all $\bar v^0\in V^\star$ satisfying $\norm{\bar v^0-\bar u^0}_\star<\delta$ and for all $t\ge 0$.
\end{theorem}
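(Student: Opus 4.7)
My plan is to linearize $S_{\rm ext}$ around the reference trajectory $\bar u(t)=S_{\rm ext}(t)\bar u^0$ and then run, on the difference $\bar w(t)=S_{\rm ext}(t)\bar v^0-S_{\rm ext}(t)\bar u^0$, the same weighted-norm induction that drives the a priori estimate of the preceding theorem. Writing $u_n,v_n,w_n$ for the components, bilinearity of $B$ gives
\beqs
\frac{dw_n}{dt}+Aw_n+\sum_{j+k=n}\bigl[B(w_j,v_k)+B(u_j,w_k)\bigr]=0,\quad w_n(0)=v_n^0-u_n^0.
\eeqs
The system for $\bar w$ is triangular: the equation for $w_n$ involves only $w_j$ with $j<n$ together with the $u_j,v_k$ already controlled by the previous theorem. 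So the structural features enabling the a priori bound are still in force, now with $\bar w$ playing the role of $\bar u$ and $\bar w^0=\bar v^0-\bar u^0$ playing the role of initial data.

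For $n=1$, $w_1$ solves the heat equation, so $\|w_1(t)\|\le e^{-t}\|w_1^0\|$. For $n\ge 2$, I would combine Duhamel's formula with the bilinear estimates used in the preceding theorem and set $a_n=\sup_{t\ge 0}e^t\|w_n(t)\|$; this should produce
\beqs
a_n\le \|w_n^0\|+K\sum_{j+k=n}\Bigl(a_j\sup_{s\ge 0}e^s\|v_k(s)\|+a_k\sup_{s\ge 0}e^s\|u_j(s)\|\Bigr),
\eeqs
with $K$ absorbing a Stokes-smoothing time integral such as $\int_0^\infty\tau^{-3/4}e^{-\tau}d\tau$. After restricting to $\|\bar v^0-\bar u^0\|_\star\le 1$ (so that $\|\bar v^0\|_\star\le\|\bar u^0\|_\star+1$), the previous theorem applied to $\bar u^0$ and $\bar v^0$ gives $e^s\|u_j(s)\|\le M/\rho_j$ and $e^s\|v_k(s)\|\le M/\rho_k$ for a constant $M=M(\|\bar u^0\|_\star)$. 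Multiplying through by $\rho_n$ and invoking the submultiplicativity $\rho_n\le\tilde\kappa_n\gamma_n\rho_j\rho_k$ (valid for $j+k=n$ with $j,k\ge 1$, since $\rho_n\le\tilde\kappa_n\gamma_n\rho_{n-1}^2$ and $(\rho_n)$ is decreasing) converts this into
\beqs
\rho_n a_n\le \rho_n\|w_n^0\|+KM\tilde\kappa_n\gamma_n\sum_{j+k=n}(\rho_j a_j+\rho_k a_k).
\eeqs

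Setting $T_n=\sum_{j=1}^n\rho_j a_j$, the inequality above yields the linear recursion $T_n\le T_{n-1}(1+2KM\tilde\kappa_n\gamma_n)+\rho_n\|w_n^0\|$, and the rapid decay $\tilde\kappa_n^{1/2^n}\to 0$ (which certainly forces $\sum_n\tilde\kappa_n\gamma_n<\infty$) makes the infinite product $\prod_n(1+2KM\tilde\kappa_n\gamma_n)$ converge. Iterating the recursion, I get $\sup_n T_n\le C\|\bar w^0\|_\star$ for some $C=C(\|\bar u^0\|_\star)$, whence $\|\bar w(t)\|_\star\le Ce^{-t}\|\bar v^0-\bar u^0\|_\star$. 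Choosing $\delta=\min(1,\varepsilon/C)$ then completes the proof. The main technical obstacle is the convolution-type coupling on the right-hand side: each $\rho_n a_n$ depends on all pairs $(\rho_j a_j,\rho_k a_k)$ with $j+k=n$, and it is only the hierarchical weight recursion $\rho_n\le\tilde\kappa_n\gamma_n\rho_{n-1}^2$ that prevents the induction constants from blowing up as $n\to\infty$. This is precisely the mechanism that made the a priori estimate of the preceding theorem work, so the needed technical bookkeeping is essentially in place.
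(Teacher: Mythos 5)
Your proposal is essentially correct, and it is a genuinely more elementary route than the one behind the paper's statement. The survey itself gives no proof of this theorem: it defers to \cite{FHOZ1,FHOZ2}, where the continuity results are obtained inside the complex-time framework (analyticity of the extended solutions on domains of the form \eqref{Dto}--\eqref{D}, conformal transfer to a half-plane, and Phragm\'en--Lindel\"of estimates), the same machinery needed to prove the preceding a priori bound $\norm{S_{\rm ext}(t)\bar u^0}_\star\le Me^{-t}$ in the first place. What you do instead is take that a priori bound as a black box and exploit the triangular, \emph{linear} structure of the difference system for $\bar w=\bar v-\bar u$: the componentwise bounds $e^s\|u_j(s)\|\le M/\rho_j$, $e^s\|v_k(s)\|\le M/\rho_k$ (with $M=M(\norm{\bar u^0}_\star+1)$ after restricting to $\norm{\bar v^0-\bar u^0}_\star\le 1$) turn the convolution coupling into a \emph{linear} recursion for $T_n=\sum_{j\le n}\rho_j a_j$, whose product $\prod_n(1+2KM\tilde\kappa_n\gamma_n)$ converges for any size of data since $\sum_n\tilde\kappa_n\gamma_n<\infty$. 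This is exactly why your argument escapes the smallness restriction that a naive quadratic recursion for $\bar u$ itself would impose, and it even yields local Lipschitz continuity, slightly stronger than the stated conclusion. Two points deserve explicit verification but do cause no trouble: (i) the constant $K$ must be uniform in $n$, which holds because you only extract the rate $e^{-t}$ rather than $e^{-nt}$, so the Duhamel integral $\int_0^t(1+(t-s)^{-3/4})e^{-(t-s)}e^{-s}\,ds$ is bounded independently of $n$ and no resonant polynomial factors appear (the relevant bilinear estimate $|A^{-1/4}B(f,g)|\le c\|f\|\,\|g\|$ needs only $V$-norms, consistent with the $\norm{\cdot}_\star$ topology); (ii) the submultiplicativity $\rho_n\le\tilde\kappa_n\gamma_n\rho_j\rho_k$ for $j+k=n$ follows as you say from $\rho_1=1$ and the monotone decrease of $(\rho_n)$. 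The proof is therefore conditional on the preceding theorem --- which carries all the genuinely hard analysis --- but, granted that, your reduction is correct and considerably simpler than the route taken in the cited references.
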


As for the normal form and normalization map, one obtains the following well-posedness and continuity  results.

\begin{theorem}[Theorem 4.1 \cite{FHOZ2}]\label{normal-soln}
Let $\bar\xi=(\xi_n)_{n=1}^\infty\in \SAstar.$ Then $S_{\rm normal}(t)\bar\xi\in\SAstar$ for all $t\geq 0$.
Moreover,
\beqs \norm{S_{\rm normal}(t)\bar\xi}_\star\leq Me^{-t},\quad t>0,\eeqs
where $M$ is a positive number depending on $\norm{\bar\xi}_\star$ and 
the sequence $(\rho_n)_{n=1}^\infty$.
\end{theorem}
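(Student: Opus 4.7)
The plan is to reduce the assertion about $S_{\rm normal}$ to the already-established well-posedness of the extended NSE in $V^\star$ (Theorem \ref{Sextconti} and the estimate preceding it). The starting point is the observation that the $V$-valued functions $u_n(t):=q_n(t,\bar\xi)e^{-nt}$ form a solution of the extended NSE \eqref{EQNN}. Indeed, from $q_n'(t)+(A-n)q_n(t)+\beta_n(t)=0$ one computes
\[
u_n'(t)+Au_n(t)=\bigl[q_n'(t)+(A-n)q_n(t)\bigr]e^{-nt}=-\beta_n(t)e^{-nt},
\]
while $\sum_{k+l=n}B(u_k(t),u_l(t))=\bigl(\sum_{k+l=n}B(q_k,q_l)\bigr)e^{-nt}=\beta_n(t)e^{-nt}$, so the two terms cancel; the initial value is $u_n(0)=q_n(0,\bar\xi)=\mathcal{P}_n(\bar\xi)$. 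In particular, if $\bar u^0:=(\mathcal{P}_n(\bar\xi))_{n=1}^\infty$ lies in $V^\star$, then $S_{\rm ext}(t)\bar u^0=(u_n(t))_{n=1}^\infty$, and since $(S_{\rm normal}(t)\bar\xi)_n=R_nu_n(t)$ with $R_n$ an orthogonal projection, we would obtain
\[
\|S_{\rm normal}(t)\bar\xi\|_\star\le\|S_{\rm ext}(t)\bar u^0\|_\star\le Me^{-t}.
\]

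The main obstacle is therefore the auxiliary estimate
\[
\bar\xi\in\SAstar\quad\Longrightarrow\quad \bar u^0:=(\mathcal{P}_n(\bar\xi))_{n=1}^\infty\in V^\star,
\]
with $\|\bar u^0\|_\star$ controlled by a function of $\|\bar\xi\|_\star$. The approach I would take is induction on $n$, using the explicit recursion $\mathcal{P}_1(\bar\xi)=\xi_1$ and, for $n\ge 2$,
\[
\mathcal{P}_n(\bar\xi)=\xi_n+\sum_{m\ge 0}(-1)^{m+1}(A-n)^{-m-1}\frac{d^m}{dt^m}(I-R_n)\beta_n(t)\Big|_{t=0}
-\int_0^0\!R_n\beta_n,
\]
expanded through $\beta_n=\sum_{k+l=n}B(q_k,q_l)$. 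Two ingredients drive the induction: first, $(A-n)^{-1}$ is bounded on $(I-R_n)H$ uniformly in $n$ in an $A^\alpha$-graded sense, and $B$ satisfies the standard product estimate $\|\nabla B(u,v)\|\le C\|u\|_V\|v\|_V$ (with higher regularity furnished, if needed, by the factor $A$ in the resolvent); second, and decisively, the weights obey
\[
\rho_n\le\tilde\kappa_n\gamma_n\rho_{n-1}^2\le\tilde\kappa_n\gamma_n\rho_k\rho_l\qquad\text{whenever }k+l=n,\ k,l\ge 1,
\]
because the sequence $(\rho_n)$ decreases super-exponentially. This last convolution-type inequality is exactly what allows one to bound
\[
\rho_n\|\nabla\mathcal{P}_n(\bar\xi)\|\le c_n^{(0)}\rho_n\|\nabla\xi_n\|+c_n\!\sum_{k+l=n}\!\bigl(\rho_k\|\nabla\mathcal{P}_k(\bar\xi)\|\bigr)\bigl(\rho_l\|\nabla\mathcal{P}_l(\bar\xi)\|\bigr)
\]
with $c_n\to 0$ fast; a standard generating-function argument (Cauchy majorants in the variable $\sum_n\rho_n\|\nabla\mathcal{P}_n\|$) then closes the induction and yields $\sum_n\rho_n\|\nabla\mathcal{P}_n(\bar\xi)\|\le\Phi(\|\bar\xi\|_\star)$ for an explicit increasing function $\Phi$.

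The delicate part of this scheme, and where I expect to have to be most careful, is handling the time derivatives $\frac{d^m}{dt^m}\beta_n(t)\big|_{t=0}$ arising from the resolvent series: these produce polynomial-in-$n$ degree explosions that must be absorbed by the rapidly decaying $\tilde\kappa_n,\gamma_n$. The choice $\gamma_n\ll n^{-n}$ in the definition of the weights is precisely tailored to make the resulting binomial/multinomial coefficient sums summable. Once this technical estimate is in hand, combining it with the first paragraph and Theorem~\ref{Sextconti} (applied to $\bar u^0=(\mathcal{P}_n(\bar\xi))_{n=1}^\infty$) yields both $S_{\rm normal}(t)\bar\xi\in\SAstar$ (the membership in $\mathcal S_A$ being automatic from the projections $R_n$) and the claimed exponential bound $\|S_{\rm normal}(t)\bar\xi\|_\star\le Me^{-t}$ with $M=M(\|\bar\xi\|_\star,(\rho_n))$.
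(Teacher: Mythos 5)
Your first paragraph is correct and matches the structure the paper encodes in its commutative diagram (Figure \ref{main-diagram}): $(q_n(t,\bar\xi)e^{-nt})_n$ does solve the extended NSE with data $(\mathcal P_n(\bar\xi))_n=Q(0,\bar\xi)$, the components of $S_{\rm normal}(t)\bar\xi$ are the $R_n$-projections of these, and so the theorem would follow from the $V^\star$ well-posedness of $S_{\rm ext}$ once one knows that $Q(0,\cdot)$ maps $\SAstar$ boundedly into $V^\star$. The paper does not prove that step here either; it defers to \cite{FHOZ1,FHOZ2} and describes the method as complexification of the (extended) NSE, conformal transfer to a half-plane, Phragm\'en--Lindel\"of estimates, and recursive bounds on $q_n(\zeta,\cdot)$ for \emph{complex} $\zeta$. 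That is where your proposal has a genuine gap: your induction tracks only the quantities $\rho_n\|\nabla\mathcal P_n(\bar\xi)\|=\rho_n\|\nabla q_n(0,\bar\xi)\|$, but the recursion for $q_n(0,\bar\xi)$ involves $\frac{d^m}{dt^m}\beta_n(t)\big|_{t=0}$ for all $m$, i.e.\ \emph{all} Taylor coefficients of the lower-order polynomials $q_k(t,\bar\xi)$, not just their values at $t=0$. The displayed convolution inequality
$\rho_n\|\nabla\mathcal P_n\|\le c_n^{(0)}\rho_n\|\nabla\xi_n\|+c_n\sum_{k+l=n}(\rho_k\|\nabla\mathcal P_k\|)(\rho_l\|\nabla\mathcal P_l\|)$
therefore cannot be derived from the induction hypothesis as stated: the right-hand side must involve a norm of the whole polynomial $q_k(\cdot,\bar\xi)$ (equivalently, its sup on a complex disk or a weighted sum of its coefficients), which is exactly why the actual proof carries the complexified $q_n(\zeta,\bar\xi)$ through the induction. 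Your scheme could in principle be repaired by running a doubly-indexed induction over $n$ and the coefficient order, but as written it does not close.

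Two smaller points. The bilinear estimate you invoke, $\|\nabla B(u,v)\|\le C\|u\|_V\|v\|_V$, is false: $B(u,v)=P[(u\cdot\nabla)v]$ loses a derivative, so for $u,v\in V$ one only lands in a negative-order space, and the needed smoothing must come from the resolvents $(A-n)^{-m-1}$. Relatedly, $(A-n)^{-1}$ on $(I-R_n)H$ is bounded with norm $\le 1$ in $H$, but the graded norms $\|A^{1/2}(A-n)^{-1}A^{s}\|$ that you actually need grow polynomially in $n$; this growth (together with the factorial-type growth coming from the polynomial degrees) is what the choice $\gamma_n\ll n^{-n}$ is designed to absorb, and any complete argument must make that bookkeeping explicit rather than assert uniformity in $n$. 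Your weight inequality $\rho_n\le\tilde\kappa_n\gamma_n\rho_k\rho_l$ for $k+l=n$ is correct (since $(\rho_n)$ is nonincreasing and $\rho_n=\tilde\kappa_n\gamma_n\rho_{n-1}^2$) and is indeed the right convolution mechanism; the missing ingredient is the stronger induction hypothesis on the full polynomials.
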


\begin{theorem}[Theorem 4.2 \cite{FHOZ2} and Theorem 7.4 \cite{FHOZ1}]\label{normalconti}
For each $t\geq 0$, the map 
$$\bar \xi\in\SAstar\to S_{\rm normal}(t)\bar\xi\in\SAstar\text{ is continuous.}$$
In particular, there exists $\varepsilon_0>0$ such that if $\bar\xi,\bar\chi\in \SAstar$ and $\norm{\bar\xi}_{\star},\norm{\bar \chi}_{\star}< \varepsilon_0$, then
\beqs
 \norm{S_{\rm normal}(t)\bar\xi-S_{\rm normal}(t)\bar\chi}_{\star}
          \leq 4e^{1/8}e^{-t}\norm{\bar \xi-\bar \chi}_{\star}\quad\forall t\ge 0.
\eeqs
\end{theorem}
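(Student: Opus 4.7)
The plan is to study $\bar w(t)=S_{\rm normal}(t)\bar\xi-S_{\rm normal}(t)\bar\chi$ component-by-component via equation \eqref{NFSA}, then close the estimate in the $\star$-norm by exploiting the multilinear structure of $\mathcal B$ together with the quadratic weight recursion $\rho_n\le\tilde\kappa_n\gamma_n\rho_{n-1}^2$ built into the definition of $\SAstar$. The quantitative Lipschitz bound for small data should emerge from a single Gronwall argument, and continuity at an arbitrary point will follow by localizing the same computation on bounded sets.

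Set $\bar\xi(t)=S_{\rm normal}(t)\bar\xi$ and $\bar\chi(t)=S_{\rm normal}(t)\bar\chi$. Since $R_nH$ is the eigenspace of $A$ for the eigenvalue $n$, Duhamel's formula applied to the $n$-th component of \eqref{NFSA} gives
\begin{equation*}
w_n(t)=e^{-nt}w_n(0)-\int_0^t e^{-n(t-s)}\bigl[\mathcal B_n(\bar\xi(s))-\mathcal B_n(\bar\chi(s))\bigr]\,ds,
\end{equation*}
with $w_1(t)=e^{-t}w_1(0)$ exactly because $\mathcal B_1\equiv0$. For $n\ge 2$, split
\begin{equation*}
\mathcal B_n(\bar\xi)-\mathcal B_n(\bar\chi)=\sum_{k+l=n}R_n\Bigl[B\bigl(\mathcal P_k(\bar\xi)-\mathcal P_k(\bar\chi),\mathcal P_l(\bar\xi)\bigr)+B\bigl(\mathcal P_k(\bar\chi),\mathcal P_l(\bar\xi)-\mathcal P_l(\bar\chi)\bigr)\Bigr].
\end{equation*}
By Lemma \ref{mPjlem} and the resonance identity \eqref{res1}, each $\mathcal P_k(\bar\xi)-\mathcal P_k(\bar\chi)$ is a polynomial Lipschitz function of the first $k$ coordinates. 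Combining this with the standard periodic-case estimate $\|B(u,v)\|\le C\|u\|\,|Av|$ bounds $\|\nabla(\mathcal B_n(\bar\xi)-\mathcal B_n(\bar\chi))\|_{L^2}$ by a weighted sum of products of polynomials in $\|\xi_j\|,\|\chi_j\|,\|\xi_j-\chi_j\|$ with $j<n$.

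The central step is then to multiply by $\rho_n$, sum over $n\ge 2$, and use the crucial inequality $\rho_n\le\tilde\kappa_n\gamma_n\rho_k\rho_l$ whenever $k+l=n$; the super-exponentially small factor $\tilde\kappa_n\gamma_n$ is dimensioned precisely to absorb the polynomial growth of $\deg\mathcal P_j$ in $j$. The convolution $\sum_{k+l=n}$ then collapses into a product of $\star$-norms, yielding
\begin{equation*}
\sum_{n\ge 2}\rho_n\|\nabla(\mathcal B_n(\bar\xi)-\mathcal B_n(\bar\chi))\|_{L^2}\le K\bigl(\|\bar\xi\|_\star+\|\bar\chi\|_\star\bigr)\|\bar\xi-\bar\chi\|_\star
\end{equation*}
with an absolute constant $K$. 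Inserting this bound into the integrated ODE produces
\begin{equation*}
\|\bar w(t)\|_\star\le e^{-t}\|\bar w(0)\|_\star+K\int_0^t e^{-(t-s)}\bigl(\|\bar\xi(s)\|_\star+\|\bar\chi(s)\|_\star\bigr)\|\bar w(s)\|_\star\,ds.
\end{equation*}
Under $\|\bar\xi\|_\star,\|\bar\chi\|_\star<\varepsilon_0$, Theorem \ref{normal-soln} gives $\|\bar\xi(s)\|_\star+\|\bar\chi(s)\|_\star\le M\varepsilon_0 e^{-s}$, so Gronwall applied to $e^t\|\bar w(t)\|_\star$ yields $\|\bar w(t)\|_\star\le e^{-t}\exp(KM\varepsilon_0)\|\bar w(0)\|_\star$. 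Choosing $\varepsilon_0$ with $KM\varepsilon_0\le 1/8$ and folding in the numerical constants from the previous step delivers the stated $4e^{1/8}e^{-t}$. For continuity at a general $\bar\xi\in\SAstar$, the same core estimate applied on bounded subsets of $\SAstar$ gives local-in-time Lipschitz dependence on $[0,T]$, while the decay $\|S_{\rm normal}(t)\bar\xi\|_\star\le M(\bar\xi)e^{-t}$ drives both orbits into the small ball where the global Lipschitz bound takes over.

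The main obstacle will be the weighted summation of the multilinear estimate: closing $\sum_{k+l=n}$ with an absolute constant $K$ requires a delicate interplay between the quadratic recursion $\rho_n\le\rho_{n-1}^2$, which converts convolutions into products of $\star$-norms, and the extra damping $\tilde\kappa_n\gamma_n$, which compensates for the degree growth of the polynomials $\mathcal P_j$. Every structural feature of the weights introduced in the definition of $\SAstar$ is used precisely here, and this is where the real work of the proof in \cite{FHOZ2} lies.
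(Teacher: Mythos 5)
Your outline is a sensible first plan, but it is not a proof: the entire difficulty of the theorem is concentrated in the one display you assert without argument, namely the weighted Lipschitz bound $\sum_{n\ge 2}\rho_n\|\nabla(\mathcal B_n(\bar\xi)-\mathcal B_n(\bar\chi))\|_{L^2}\le K(\|\bar\xi\|_\star+\|\bar\chi\|_\star)\|\bar\xi-\bar\chi\|_\star$, and you concede as much in your closing paragraph. Moreover, the bound as written cannot be of the right form. Each $\mathcal B_n$ is quadratic in the $\mathcal P_k$'s, but the $\mathcal P_k$'s are themselves polynomials of degree up to $k$ in $\xi_1,\dots,\xi_k$ (built recursively through $B$ and the operators $(A-j)^{-m-1}d^m/dt^m$), so $\mathcal B_n$ has degree up to $n$ in $\xi$ and its Lipschitz constant on a ball of radius $R$ involves all powers $R^{d-1}$, $2\le d\le n$, with coefficients that grow with $n$ both from the combinatorics of the recursion and from the loss of derivatives ($\|B(u,v)\|$ needs more regularity than $\|\nabla\cdot\|_{L^2}$, costing powers of $n$ on each eigenspace $R_nH$). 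Whether the super-exponential decay of $\tilde\kappa_n\gamma_n\rho_{n-1}^2$ absorbs all of this simultaneously and uniformly is precisely the content of the theorem, not a remark one can wave at. Your treatment of general initial data is also suspect: you claim local-in-time Lipschitz dependence on bounded subsets of $\SAstar$, whereas the paper is careful to claim only continuity (not Lipschitz continuity) of $S_{\rm normal}(t)$ away from the small ball — the unbounded polynomial degrees are exactly why a uniform Lipschitz constant on a large ball is not available.

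It is also worth noting that your route differs from the one the paper actually sketches. The paper does not run a real-variable Duhamel--Gronwall argument on \eqref{NFSA}; it exploits the explicit lower-triangular solution $S_{\rm normal}(t)\bar\xi=(R_nq_n(t,\bar\xi)e^{-nt})_n$, complexifies time, uses analyticity of the relevant quantities on a domain of the form \eqref{Dto}--\eqref{D}, maps conformally to the half-plane, and applies Phragm\'en--Lindel\"of estimates to obtain recursive bounds on each $q_n$ that are then summed against the weights $\rho_n$. That machinery is what actually produces the uniform-in-$n$ recursive estimates you are missing; without it (or a genuine substitute), your Gronwall step has nothing to feed on.
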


According to Theorem \ref{normalconti}, the normal form \eqref{NFSA} is a well-posed system of ODEs in the infinite dimensional Banach space $\SAstar$.

In other words, the semigroup $S_{\rm normal}(t), t>0,$ generated by the solutions of the normal form \eqref{NFSA} leaves invariant the whole space $\SAstar$. Furthermore, we establish the continuity (but not necessarily Lipschitz continuity on the entire $\SAstar$) of each $S_{\rm normal}(t)$
as a map form $\SAstar$ to $\SAstar$, which means that the normal form is a well-posed system.

\begin{theorem}[Theorems 5.9  and 5.21 \cite{FHOZ2}]\label{main-normalmap}
The normalization map $W$ is a continuous function from $\solnset$ to $\SAstar$.
\end{theorem}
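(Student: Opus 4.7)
The plan is to establish continuity at an arbitrary fixed $u_0^\star\in\solnset$ by combining pointwise continuity of each coordinate of $W$ with a uniform-in-neighborhood summable tail bound, the latter exploiting the extremely fast decay of the weights $\rho_n$.

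First I would show that for each fixed $n$, the coordinate map $u_0\mapsto R_nW_n(0;u_0)\in R_nV$ is continuous on $\solnset$. By Lemma \ref{mPjlem} and the explicit recursion underlying Theorem \ref{AsRe}, $W_n(0;u_0)$ is obtained from the finite tuple $R_1W_1(0;u_0),\ldots,R_{k_n}W_{k_n}(0;u_0)$ by a finite composition of bounded operations: the projections $R_j$, the resolvents $(A-j)^{-m-1}(I-R_j)$ on the appropriate subspaces, and the bilinear map $B$. So by induction on $n$ it suffices to establish continuity of the ``leading'' coordinates $u_0\mapsto R_jW_j(0;u_0)$; these are given by limits of the form $R_1W_1(0;u_0)=\lim_{t\to\infty}e^{\nu t}R_1u(t,u_0)$ and analogous, more elaborate limits for higher $j$. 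Because this convergence is exponentially fast with a rate that is uniform on $V$-bounded subsets of $\solnset$, and because $u_0\mapsto u(t,u_0)$ is continuous in $V$ at each fixed $t$, the composition is continuous.

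Next I would establish a uniform tail bound on a $V$-neighborhood $U$ of $u_0^\star$. The pointwise step already yields constants $c_n$ with $\|R_nW_n(0;u_0)\|\le c_n$ for all $u_0\in U$. The recursion $\beta_j=\sum_{k+l=j}B(q_k,q_l)$ together with the explicit formula for $q_j$ given in Section \ref{further} leads to an inductive estimate of the form $c_n\lesssim C_n(\max_{k<n}c_k)^2$, so that $c_n$ grows at most like a double-exponential $D^{2^n}$. By construction the weights satisfy $\rho_n=\tilde\kappa_n\gamma_n\rho_{n-1}^2$ with $\tilde\kappa_n^{1/2^n}\to 0$, so $\rho_n$ decays at a double-exponential rate with an extra factor of $\tilde\kappa_n\gamma_n$ gained at each step. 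This makes $\rho_n c_n$ tend to zero geometrically and yields $\sum_n\rho_n c_n<\infty$ uniformly on $U$.

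Finally I would combine the ingredients by the standard $\varepsilon/2$ split: given $\varepsilon>0$, first pick $N$ with $2\sum_{n>N}\rho_n c_n<\varepsilon/2$, and then shrink $U$ so that $\sum_{n\le N}\rho_n\|R_nW_n(0;u_0)-R_nW_n(0;u_0^\star)\|<\varepsilon/2$ for $u_0\in U$, which is possible by continuity of finitely many coordinates. Adding head and tail gives $\|W(u_0)-W(u_0^\star)\|_\star<\varepsilon$. The hardest step will be the uniform tail bound: one must extract the precise double-exponential growth rate of the iterated polynomial-and-resolvent construction for $W_n$ as $n\to\infty$ and show it is overwhelmed by the built-in double-exponential decay of $\rho_n$. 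This matching is exactly the arithmetic encoded in the choices of $\gamma_n$ and in the hypothesis $\tilde\kappa_n^{1/2^n}\to 0$; carrying it through rigorously requires careful bookkeeping of the constants appearing in the recursive formula for $q_j$, in particular the resolvent norms $\|(A-j)^{-m-1}(I-R_j)\|$ in the appropriate Sobolev scale and the boundedness constants of $B$.
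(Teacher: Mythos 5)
Your top-level architecture --- continuity of finitely many coordinates plus a uniform, summable tail bound, assembled by an $\varepsilon/2$ split --- is the natural one and is consistent with what the paper (which only sketches the argument and defers to \cite{FHOZ1,FHOZ2}) indicates. The genuine gap is in your tail bound. You claim $c_n\lesssim C_n(\max_{k<n}c_k)^2$ follows from the recursion $\beta_j=\sum_{k+l=j}B(q_k,q_l)$ and the explicit resolvent formula for $q_j$. But that formula reads $q_j(t,\xi)=\xi_j-\int_0^t R_j\beta_j\,d\tau+\sum_{n\ge0}(-1)^{n+1}(A-j)^{-n-1}\frac{d^n}{dt^n}(I-R_j)\beta_j$: at every level $j$ a \emph{new free datum} $\xi_j=R_jW_j(0;u_0)$ enters which is not produced by the lower-order $q_k$'s. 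It is extracted from the long-time asymptotics of the remainder $v_{j-1}(t)=u(t)-\sum_{k<j}q_k(t)e^{-kt}$, and what you actually need is a bound on the constant in $\|v_{j-1}(t)\|=O(e^{-(j-1+\varepsilon)t})$ with explicit, controlled dependence on $j$ and locally uniform dependence on $u_0$. Theorem \ref{AsRe} gives no such quantitative control; your parenthetical assertion that ``this convergence is exponentially fast with a rate that is uniform on $V$-bounded subsets of $\solnset$'' is precisely the hard statement, not a known fact you may invoke. So the recursive bookkeeping of resolvent norms and $B$-constants only controls the part of $q_j$ driven by $\beta_j$, and the induction does not close.

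The paper's route supplies exactly the missing quantitative input by different means: one complexifies the NSE and the extended NSE, shows the solutions are analytic and bounded on a time domain of the form \eqref{Dto}--\eqref{D} that widens exponentially, conformally maps that domain onto a half-plane, and applies Phragm\'en--Lindel\"of estimates to obtain recursive bounds on $W_n(u^0)$, $q_n(0,W(u^0))$ and on the remainders $u(\zeta)-\sum_{j\le n}q_j(\zeta,W(u^0))e^{-j\zeta}$ with trackable growth in $n$; only then is the double-exponential decay of the weights $\rho_n=\tilde\kappa_n\gamma_n\rho_{n-1}^2$ brought in to make $\sum_n\rho_n\|W_n(0;u_0)\|$ converge locally uniformly. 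Your proposal omits this complex-analytic step entirely, and without it (or a substitute giving $n$-explicit remainder estimates) the uniform tail bound, and hence the proof, does not go through.
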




We summarize our results stated above in the commutative diagram (Figure \ref{main-diagram}) in which all mappings are continuous.

\begin{figure}[ht]
\begin{displaymath}\renewcommand{\labelstyle}{\textstyle}
\xymatrix@=1.75cm{
&\solnset \ar[ddl]_{W(0,\cdot)}\ar[d]^{W(\cdot)}\ar[rr]^{S(t)}&& 
\solnset \ar[d]_{W(\cdot)}\ar[ddr]^{W(0,\cdot)}\\
                   &{S_A^\star}\ar[dl]^{Q(0,\cdot)}      
\ar[rr]^{S_{\rm normal}(t)} && {S_A^\star}\ar[dr]_{Q(0,\cdot)}\\
           {V^\star} \ar[rrrr]^{S_{\rm ext}(t)} & & & & {V^\star}
}
\end{displaymath}
\caption{Commutative diagram for mappings and spaces (\cite{FHOZ2}).}
\label{main-diagram}
\end{figure}
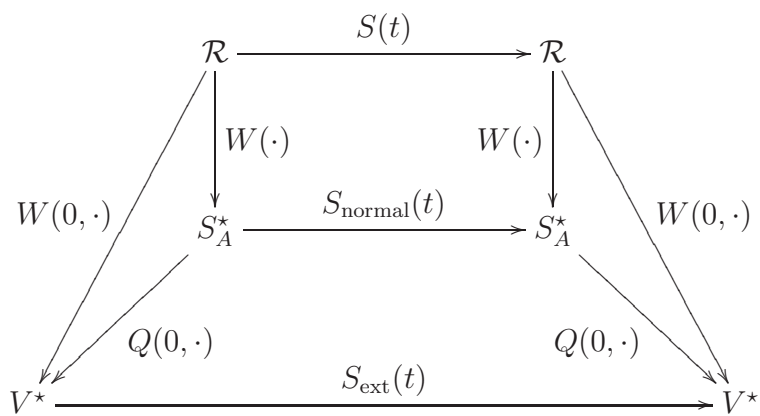
\noindent

The complete proofs of  Theorems \ref{normal-soln}, \ref{normalconti} and \ref{main-normalmap} are lengthy and technical and giving them  exceeds the scope of this survey. We refer the reader to the papers \cite{FHOZ1,FHOZ2} for details. They involve the complexification of NSE and extended NSE, for which the solutions are analytic in the complex time in a large domain of the form \eqref{Dto} and \eqref{D}. With appropriate transformation to transfer them to the half plane, and utilizing some Phragmen-Linderl\"off estimates, we can obtain recursive estimates for each step $W_n(u^0)$, $q_n(0,W(u^0))$, $q_n(\zeta,W(u^0))$,  and $u(\zeta)-\sum_{j=1}^n q_j(\zeta,W(u^0))e^{-j\zeta}$ for complex time $\zeta$. They, of course, depend on the weights $\rho_n$'s. Then the sum, say, $\sum_{n=1}^\infty \rho_n \|q_n(0,W(u^0))\|$ is convergent when $\rho_n$'s are chosen specifically and decay to zero extremely fast.


   \section{Navier and Stokes meet Poincar\' e and Dulac}
   \label{secPD}
    It was not totally clear that the normal form theory for the NSE derived in  section \ref{secnormal} could be related to the Poincar\' e-Dulac theory presented in section \ref{secODE}. It turns out to be the case, at least in the periodic case.

    \medskip
We consider thus the periodic case and use the same notation as in subsection \ref{further}.

In that subsection,  \eqref{NFSA} is a normal form  of  NSE  in a  suitable Banach space $\SAstar$. 
This space,  however, is too big to make a link with the {\it concrete} (formal series) approach of the   Poincar\' e-Dulac theory. Such link was finally  established  in \cite{FHS}.        
%
%
In short,
   \begin{itemize}
   \item The system \eqref{NFSA}, indeed, provides a  Poincar\' e-Dulac normal form of the  NSE, and is obtained by a (formal) explicit change of variables. The change of variables is a formal series expansion of the inverse of the normalization map $W$.
   
   \item Each homogeneous term in the formal series is well-defined in suitable  Sobolev spaces.
   \end{itemize}
   
\medskip   
We present below the precise results and provide main ideas and techniques in their proofs.

The following topological vector space  will be essential in our study 
\beq \label{Einfty}
E^\infty=C^\infty(\R^3,\R^3)\cap V\subset \mathcal S_A.
\eeq
It is endowed with the topology generated by the family of norms $|A^\alpha \cdot|$ for all $\alpha\ge 0$.

\medskip
First, we give an explicit definition of a normal form for the NSE, which is an analogue to classical ones by Poincar\'e and Dulac reviewed in section \ref{secODE}. We start with homogeneous polynomials  and resonant monomials in infinite-dimensional spaces.
      
\begin{definition}\label{polyxi}
Let $Q\in \mathcal H^{[d]}(E^\infty)$,  the space of homogeneous polynomials in $\xi\in E^\infty$ of order $d$. Then $Q(\xi)$ ($\xi\in E^\infty$ and $\xi_j=R_j\xi$, $j\in\N$),  is a monomial of degree $\alpha_{k_i}>0$ in $\xi_{k_i}$ where $i=1,2,\ldots,m$, $\alpha_{k_1}+\ldots+\alpha_{k_m}=d$ and $k_1<k_2<\ldots<k_m$ , if it can be represented as
\begin{equation}
\label{Qmonomial} 
  Q(\xi)=\tilde{Q}(\underbrace{\xi_{k_1},\ldots,\xi_{k_1}}_{\alpha_{k_1}},\underbrace{\xi_{k_2},\ldots,\xi_{k_2}}_{\alpha_{k_2}},\ldots,\underbrace{\xi_{k_m},\ldots,\xi_{k_m}}_{\alpha_{k_m}}), 
\end{equation}
where  
$\tilde Q(\xi^{(1)},\xi^{(2)},\ldots,\xi^{(d)})$ is a continuous $d$-linear map from $(E^\infty)^d$ to $E^\infty$.

The monomial $Q(\xi)$ defined by \eqref{Qmonomial}, with degree $d\ge 2$, is called \emph{resonant} if 
$$\sum_{i=1}^m \alpha_{k_i} k_i=j\text{ and }Q=R_jQ\ne 0.$$ 
\end{definition}

Although the  definition  of resonant monomials in Definition \ref{polyxi} is more abstract than that in Definition \ref{classical},  they are essentially the same, see details in \cite[Lemmas 4.4 and 4.6]{FHS}.

   \begin{definition}\label{NFdef}
    A differential equation in an infinite dimensional space $E$ 
\beq \label{gennormal} 
\frac{d\xi}{dt}+A\xi+\sum_{d=2}^\infty \Phi^{[d]}(\xi)=0
\eeq
is a Poincar\'e--Dulac normal form for the NSE if 
\begin{enumerate}[label={\rm (\roman*)}]
 \item Each $\Phi^{[d]}$ belongs to $\mathcal H^{[d]}(E)$, the space of homogeneous polynomials of order $d$,  and $\Phi^{[d]}(\xi)=\sum_{k=1}^\infty \Phi_k^{[d]}(\xi)$, where all $\Phi_k^{[d]}\in \mathcal H^{[d]}(E)$ are resonant monomials,

\item Equation \eqref{gennormal} is obtained from NSE by a formal change of variable 
\beq \label{genchange}
u=\sum_{d=1}^\infty \Psi^{[d]}(\xi),
\quad \text{where } \Psi^{[d]}\in \mathcal H^{[d]}(E).
\eeq 
\end{enumerate}
   \end{definition}

\bigskip   
To establish a normal form theory for the NSE, according to Definition \ref{NFdef}, we need to identify the framework $E$, the normal form  \eqref{gennormal}, and the formal change of variable \eqref{genchange}.

\medskip
\noindent\textbf{The framework.} We will use the  space  $E^\infty$ defined by \eqref{Einfty}. 

\medskip
\noindent
\textbf{The normal form.} 
The natural candidate for the normal form is \eqref{NFSA}. However, we must rewrite it in the power series form.

Let  $\mathcal  P_j^{\lbrack d\rbrack}(\xi)$ and $\mathcal B_j^{\lbrack d\rbrack}(\xi)$ denote the sum of all homogeneous monomials of degree $d$ of $\mathcal P_j(\xi)$ and $\mathcal B_j(\xi)$, respectively.
  Then the series  $\sum_j\mathcal P _j^{\lbrack d\rbrack}(\xi)$ and  $\sum_j\mathcal B _j^{\lbrack d\rbrack}(\xi)$ converge in   $E^\infty$ to continuous polynomials $\mathcal P ^{\lbrack d\rbrack}(\xi)$ and $\mathcal B ^{\lbrack d\rbrack}(\xi)$, respectively, see Theorem \ref{simple} below.

  The system \eqref{NFSA} is rewritten in the formal power series as
  \begin{equation}\label{NFPD}
  \frac{d\xi}{dt}+A\xi+\sum_{d=2}^\infty \mathcal B ^{\lbrack d\rbrack}(\xi)=0.
  \end{equation}

  Inheriting the spectral property \eqref{res2}, each polynomial $\mathcal B ^{\lbrack d\rbrack}(\xi)$  in \eqref{NFPD}
  can be verified to be resonant. Hence, system \eqref{NFPD} is a potential Poincar\'e-Dulac normal form, except that it is missing a power series change of variable.

  \medskip
\noindent\textbf{The formal change of variable.} We already know that NSE reduces to \eqref{NFPD} by the transformation $\xi=W(u)$. Therefore, $u$ should be $W^{-1}(\xi)$.
  Of course, $W^{-1}$ is not rigorously defined on $E^\infty$ and, additionally, not expressed in the power series form. 
 To resolve these, we heuristically argue that
  \begin{equation*}
u=\sum_{j=1}^\infty q_j(0,\xi)=\sum_{j=1}^\infty \sum_{d=1}^j q_{j}^{[d]}(0,\xi)
=\sum_{d=1}^\infty \highlight{\sum_{j=d}^\infty  q_{j}^{[d]}(0,\xi)}=\sum_{d=1}^\infty \highlight{\mP^{[d]}(\xi)}.
  \end{equation*}
Note that $\mP^{[1]}(\xi)=\xi$. Thus, the formal change of variable would be
 \begin{equation}\label{uxi}
 u=\xi+\sum _{d=2}^\infty \mathcal P^{\lbrack d \rbrack}(\xi).  
 \end{equation}

The change of variable \eqref{uxi} is considered as the formal inverse of the normalization map $W$.
  
  \medskip
  It turns out that, these arguments can be made rigorous and we obtain the following result.
  
  \begin{theorem}[Theorem 4.9 \cite{FHS}]\label{FHSthm}
  The system \eqref{NFPD}  is a  Poincar\' e-Dulac normal form in $E^\infty$ for the  NSE \eqref{NS}, and is obtained by the formal change of variable \eqref{uxi}.
   \end{theorem}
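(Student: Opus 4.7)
\bigskip

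My plan is to split the proof into three stages: (a) show that the formal series $\mathcal{P}^{[d]}(\xi)=\sum_j \mathcal{P}_j^{[d]}(\xi)$ and $\mathcal{B}^{[d]}(\xi)=\sum_j \mathcal{B}_j^{[d]}(\xi)$ converge in $E^\infty$ for each $d\ge 1$, thereby defining continuous homogeneous polynomials of degree $d$ on $E^\infty$; (b) verify that each $\mathcal{B}^{[d]}$ is a sum of resonant monomials in the sense of Definition \ref{polyxi}; and (c) check that the formal substitution \eqref{uxi} transforms the NSE \eqref{NS} into the candidate normal form \eqref{NFPD}.

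For stage (a), the approach is to exploit two features of the recursive formula defining $q_j(0,\xi)$. First, the resonance constraint \eqref{res1} restricts the multi-indices $(m_1,\ldots,m_{k_j})$ contributing to $\mathcal{P}_j^{[d]}$ to those with $\sum_i m_i = d$ and $\sum_i i\, m_i = j$. Second, in the periodic case, the Fourier-local support of the bilinear form yields $R_m B(\xi_k,\xi_\ell)=0$ unless the underlying $\veck\in\Z^3$ satisfy $|\veck_1|^2=k$, $|\veck_2|^2=\ell$, $|\veck_1+\veck_2|^2=m$; in particular $m\le(\sqrt{k}+\sqrt{\ell})^2\le 2(k+\ell)$. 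Together with the resolvent factor $(A-j)^{-n-1}$ (which produces a denominator $(m-j)^{n+1}$ on $R_m H$), these features allow, by induction on $d$, estimates of the form $|A^\alpha R_m \mathcal{P}_j^{[d]}(\xi)|\le C_{\alpha,d,m}(\xi)/\text{poly}(j)$ for every $\alpha\ge 0$, with the $C_{\alpha,d,m}$ summable over $j$ and controllable by the seminorms of $\xi$ in $E^\infty$. The same inductive scheme then handles $\mathcal{B}_j^{[d]}$, since each of its monomials is built from lower-degree $\mathcal{P}^{[e]}$'s composed with $B$ and $R_j$. This is essentially the content of Theorem~\ref{simple}, and it is the main obstacle of the proof.

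For stage (b), the spectral constraint \eqref{res2} already verified for $\mathcal{B}_j^{[d]}$ transfers directly to $\mathcal{B}^{[d]}$ because $R_j \mathcal{B}^{[d]}(\xi)=\mathcal{B}_j^{[d]}(\xi)$; translating between the ``classical'' formulation in Definition \ref{classical} and the abstract infinite-dimensional formulation in Definition \ref{polyxi} is a matter of bookkeeping (as indicated in the excerpt via \cite[Lemmas 4.4, 4.6]{FHS}).

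For stage (c), I would proceed by formal power-series matching. Differentiating $u = \xi + \sum_{d\ge 2}\mathcal{P}^{[d]}(\xi)$ in $t$ and substituting \eqref{NFPD} for $d\xi/dt$ gives one formal expression for $du/dt$; the NSE gives another, namely $-Au-B(u,u)$ after substituting the same series for $u$. Collecting terms of total degree $d$ in $\xi$ yields, for each $d\ge 2$, an algebraic identity
\begin{equation*}
(A-jI)\mathcal{P}_j^{[d]}(\xi) + \mathcal{B}_j^{[d]}(\xi) + \sum_{e+f=d,\,e\ge 2}\!\!D\mathcal{P}_j^{[e]}(\xi)\cdot\bigl(A_{{\rm on}\,\xi}\text{-type terms}\bigr) = R_j \!\!\!\sum_{k+\ell=?}\!\!B(\mathcal{P}^{[e]},\mathcal{P}^{[f]})
\end{equation*}
which, inspection shows, is exactly the identity obtained by expanding the ODE $q_j'(t,\xi)+(A-j)q_j(t,\xi)+\beta_j(t,\xi)=0$ at $t=0$ and collecting degree-$d$ terms. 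Thus the recursive definition of $q_j(0,\cdot)$ already encodes the cancellations needed. A cleaner alternative, which I would actually use, is to invoke the already-established Theorem (from \cite{FS3}) that $\xi(t)=W(u(t))$ satisfies \eqref{NFSA}, and then observe that the formal series \eqref{uxi} is, tautologically, the inverse of $W$ as a formal power series in the components of $\xi$: the degree-one part $\mathcal{P}^{[1]}(\xi)=\xi$ is the identity, and the higher-degree parts $\mathcal{P}^{[d]}$ are uniquely determined by the composition identity $W(u)=\xi$ degree by degree, with the explicit form emerging from the construction in Lemma~\ref{mPjlem}. Combined with (a) and (b), this identifies \eqref{NFPD} as a genuine Poincar\'e--Dulac normal form for the NSE in $E^\infty$.
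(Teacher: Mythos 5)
Your three-stage architecture matches the paper's proof. Stage (a) is exactly Theorem \ref{simple}, which the paper's proof of Theorem \ref{FHSthm} takes as an input (established there via the homogeneous gauges $\dinorm{\cdot}{d,n}$ and the estimate $|A^\alpha \mP_j^{[d]}(\xi)|\le c(\alpha,d)\dinorm{A^{\alpha+\frac32(d-1)}\xi}{d,j}$, summed over $j$ with the factor $(d/j)^{3/2}$ from Lemma \ref{dinorm-mainlem} — a different bookkeeping device than your resolvent-denominator decay, but to the same end). Stage (b) is the observation that resonance is inherited from \eqref{res2}. Your \emph{first} version of stage (c) — substitute the series, collect degree-$d$ terms, and recognize the resulting identity as the degree-$d$ part of the recursive ODE $q_j'+(A-j)q_j+\beta_j=0$ — is precisely what the paper does: it computes the transformed vector field $Q^{[d]}$ as a combination of $B(\mP^{[k]},\mP^{[l]})$, the chain-rule terms $D\mP^{[k]}(Q^{[l]})$, and the Poincar\'e homology operator $H_A^{(d)}\mP^{[d]}=A\mP^{[d]}-D\mP^{[d]}A\xi$, and then proves $Q^{[d]}=\mB^{[d]}$ (Proposition 4.7 of \cite{FHS}). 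Had you committed to that route, your proposal would simply be the paper's argument.

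The ``cleaner alternative'' that you say you would actually use has a genuine gap. The fact from \cite{FS3} that $\xi(t)=W(u(t))$ solves \eqref{NFSA} is a dynamical statement about actual solutions and the actual map $W$; Definition \ref{NFdef}(ii) asks for something different, namely a degree-by-degree identity of formal power series obtained by substituting \eqref{uxi} into the NSE. To bridge the two you assert that \eqref{uxi} is ``tautologically'' the formal inverse of $W$. It is not: $W$ is not given as a power series in $u$, and the relation $u=\sum_j q_j(0,W(u))$ is exactly the heuristic the paper uses to \emph{motivate} \eqref{uxi} — the convergence of that sum (indeed even of $\sum_n W_n(0,u^0)$ in $V$) is explicitly listed in this survey as unknown. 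Making ``formal inverse'' rigorous would require expanding $W$ itself into homogeneous terms and verifying the composition identity $W\big(\sum_d\mP^{[d]}(\xi)\big)=\xi$ degree by degree; that verification is of the same nature and cost as the direct computation $Q^{[d]}=\mB^{[d]}$ it was meant to avoid. So route 2 does not close the argument as stated; keep route 1.
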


   The proof of Theorem \ref{FHSthm} relies on recursive formulas giving the homogeneous terms of the normal form. The main tool to estimate their Sobolev norms is the following  family of homogeneous gauges  $\lbrack \lbrack \xi \rbrack \rbrack_{d,n}$.

We introduce the set of general multi-indices
$GI=\bigcup_{n=1}^\infty GI(n)$ where for $n\ge 1$,
$$GI(n)= \big \{ \bar\alpha=(\alpha_k)_{k=1}^\infty, \, \alpha_k\in\{0,1,2,\ldots\},\, \alpha_k=0 \hbox{ for } k>n \hbox{ or } k\not\in \sigma(A) \big  \}.$$

For $\bar \alpha\in GI$, define 
\beq\label{indexnorms} |\bar\alpha|=\sum_{k=1}^\infty \alpha_k\quad \hbox{and}\quad \|\bar\alpha\|=\sum_{k=1}^\infty k\alpha_k.\eeq

\begin{definition}[Homogeneous gauges]
Let $\xi=(\xi_k)_{k=1}^\infty\in S_A$ and $\bar\alpha=(\alpha_k)_{k=1}^\infty\in GI$, define 
\beqs \sinorm{\xi}{\bar\alpha}=\prod_{\alpha_k>0} |\xi_k|^{\alpha_k}. 
\eeqs
For $n\ge d\ge 1$, define
\beqs   \dinorm{\xi}{d,n}
=\left (\sum_{|\bar\alpha|=d,\|\bar\alpha\|=n} \sinorm{\xi}{2\bar\alpha}\right)^{1/2}.\eeqs 
\end{definition}

One can easily compare $\dinorm{\xi}{d,n}$ with the usual  norm $|\xi|$ by
\beqs  \label{diPn} \dinorm{\xi}{d,n} \le  \Big ( \sum_{\bar\alpha\in GI(n),|\bar \alpha|=d} \sinorm{\xi}{2\bar\alpha} \Big )^{1/2}
\le |P_n \xi|^{d}.\eeqs 


Moreover, one has the following multiplicative and Poincar\'e inequalities.
\begin{lemma}[Lemma 2.1 \cite{FHS}]\label{dinorm-lem1}
Let $\xi\in S_A$, $n\ge d\ge 1$ and $n'\ge d'\ge 1$. Then
\beq  \label{prod-norm-ineq} \dinorm{\xi}{d,n} \,\cdot\, \dinorm{\xi}{d',n'}\le e^{d+d'} \dinorm{\xi}{d+d',n+n'}.\eeq
\end{lemma}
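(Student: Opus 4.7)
The plan is to prove the inequality by squaring both sides, expanding the left-hand side as a double sum over pairs of multi-indices, re-indexing by the sum, and bounding the resulting combinatorial multiplicity.

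First I would observe the multiplicativity $\sinorm{\xi}{\bar\alpha}\cdot\sinorm{\xi}{\bar\beta}=\sinorm{\xi}{\bar\alpha+\bar\beta}$, which follows immediately from $\sinorm{\xi}{\bar\alpha}=\prod_k|\xi_k|^{\alpha_k}$. Squaring the desired inequality, the goal becomes
\begin{equation*}
\dinorm{\xi}{d,n}^2\,\dinorm{\xi}{d',n'}^2=\sum_{\substack{|\bar\alpha|=d\\ \|\bar\alpha\|=n}}\sum_{\substack{|\bar\beta|=d'\\\|\bar\beta\|=n'}}\sinorm{\xi}{2(\bar\alpha+\bar\beta)}\;\le\;e^{2(d+d')}\sum_{\substack{|\bar\gamma|=d+d'\\\|\bar\gamma\|=n+n'}}\sinorm{\xi}{2\bar\gamma}.
\end{equation*}
Setting $\bar\gamma=\bar\alpha+\bar\beta$, note that the constraints $|\bar\alpha|=d$, $|\bar\beta|=d'$, $\|\bar\alpha\|=n$, $\|\bar\beta\|=n'$ force $|\bar\gamma|=d+d'$ and $\|\bar\gamma\|=n+n'$. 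Collecting terms by $\bar\gamma$, the left-hand side becomes $\sum_{\bar\gamma}N(\bar\gamma)\sinorm{\xi}{2\bar\gamma}$, where $N(\bar\gamma)$ counts the number of admissible pairs $(\bar\alpha,\bar\beta)$ summing to $\bar\gamma$. Since $\bar\beta=\bar\gamma-\bar\alpha$ is determined by $\bar\alpha$, we have $N(\bar\gamma)\le\#\{\bar\alpha:0\le\alpha_k\le\gamma_k,\;|\bar\alpha|=d\}$.

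Next I would bound this count combinatorially. View $\bar\gamma$ as a multiset with $\gamma_k$ copies of symbol $k$ and $|\bar\gamma|=d+d'$ total entries; choosing $\bar\alpha$ with $0\le\alpha_k\le\gamma_k$ and $|\bar\alpha|=d$ is weaker than choosing an ordered $d$-subset of these $d+d'$ entries, so $N(\bar\gamma)\le\binom{d+d'}{d}\le 2^{d+d'}$. Since $2^{d+d'}\le e^{2(d+d')}$, we obtain
\begin{equation*}
\dinorm{\xi}{d,n}^2\,\dinorm{\xi}{d',n'}^2\le 2^{d+d'}\dinorm{\xi}{d+d',n+n'}^2\le e^{2(d+d')}\dinorm{\xi}{d+d',n+n'}^2,
\end{equation*}
and taking square roots gives \eqref{prod-norm-ineq}.

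There is no real obstacle — the only mildly delicate point is keeping the two types of indexing (weight $|\bar\alpha|$ versus weighted sum $\|\bar\alpha\|$) straight while re-indexing, and verifying that the constraints add correctly under $\bar\gamma=\bar\alpha+\bar\beta$. The constant $e^{d+d'}$ in the stated inequality is not sharp from this argument (one in fact gets $\sqrt{2}^{\,d+d'}$), so any reasonable combinatorial bound on $N(\bar\gamma)$ suffices; the factor $e^{d+d'}$ is presumably chosen for convenience in later inductive estimates involving derivatives of $A^\alpha$.
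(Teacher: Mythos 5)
Your proof is correct: the multiplicativity $\sinorm{\xi}{\bar\alpha}\sinorm{\xi}{\bar\beta}=\sinorm{\xi}{\bar\alpha+\bar\beta}$, the re-indexing by $\bar\gamma=\bar\alpha+\bar\beta$ (under which the constraints on $|\cdot|$ and $\|\cdot\|$ do add), and the multiplicity bound $N(\bar\gamma)\le\binom{d+d'}{d}\le 2^{d+d'}\le e^{2(d+d')}$ all check out, and taking square roots gives the stated inequality with room to spare. The survey states this lemma without proof, citing [FHS]; your expand--regroup--count-the-multiplicity argument is essentially the one used there, so there is nothing to flag beyond the cosmetic point that "ordered $d$-subset" should read "$d$-element subset" (either reading still yields a valid upper bound).
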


Note that the constant on the right-hand side of \eqref{prod-norm-ineq} is independent of $n,n'$.

\begin{lemma}[Lemma 2.2 \cite{FHS}]\label{dinorm-mainlem}
For any $\xi\in S_A$, any numbers $\alpha ,s\ge0$ and  $n\ge d\ge  1$, one has
\beqs  \label{imbed-ineq} \dinorm{A^\alpha\xi}{d,n}\le \left(\frac{d}{n}\right)^{s} \dinorm{A^{\alpha+s}\xi}{d,n}
\le \left(\frac{d}{n}\right)^{s}|P_n  A^{\alpha+s}\xi|^d.\eeqs  
\end{lemma}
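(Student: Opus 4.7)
The plan is to reduce the first inequality to a pointwise comparison on each multi-index appearing in the definition of $\dinorm{\cdot}{d,n}$, which in turn follows from a short elementary estimate; the second inequality is exactly the comparison already recorded just before Lemma \ref{dinorm-lem1}.

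First I would use the spectral decomposition. Since $\xi_k = R_k \xi$ lies in the eigenspace of $A$ with eigenvalue $k$ whenever $k\in\sigma(A)$ (and $\xi_k=0$ otherwise), we have $|A^\alpha \xi_k| = k^\alpha |\xi_k|$. Consequently, for every $\bar\beta\in GI$ with $|\bar\beta|=d$ and $\|\bar\beta\|=n$,
\[\sinorm{A^{\alpha+s}\xi}{2\bar\beta} = \Big(\prod_{\beta_k>0} k^{2s\beta_k}\Big)\,\sinorm{A^\alpha\xi}{2\bar\beta}.\]
Summing over admissible $\bar\beta$, the first inequality reduces to the pointwise bound
\begin{equation}\label{keybound}
\prod_{\beta_k>0} k^{\beta_k} \ge \frac{n}{d}\quad \text{whenever } |\bar\beta|=d \text{ and } \|\bar\beta\|=n.
\end{equation}

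To prove \eqref{keybound}, I would list the positive integers $k$ with multiplicity $\beta_k$, obtaining a multiset $k_1,\ldots,k_d$ of positive integers with $\sum_i k_i = n$. Writing $k_i = 1+m_i$ with $m_i\ge 0$, positivity of the cross terms in the expansion gives
\[\prod_{i=1}^d k_i = \prod_{i=1}^d (1+m_i) \ge 1 + \sum_{i=1}^d m_i = n-d+1,\]
and $n-d+1 \ge n/d$ rearranges to $(d-1)(n-d)\ge 0$, true since $n\ge d\ge 1$. Combining this with the stated bound $\dinorm{A^{\alpha+s}\xi}{d,n}\le |P_n A^{\alpha+s}\xi|^d$ then yields both inequalities, and for $s=0$ the statement is trivial.

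The only mildly delicate step is finding the correct sharpening \eqref{keybound}, since AM--GM in its usual form produces the \emph{reverse}-direction bound $\prod k_i \le (n/d)^d$. The right move is instead to observe that $\prod k_i$ is minimized by taking $d-1$ of the entries equal to $1$ (a standard swapping argument: replacing $(k_i,k_j)$ by $(1,k_i+k_j-1)$ does not increase the product when $k_i,k_j\ge 2$), which gives the sharper bound $n-d+1$ and comfortably dominates $n/d$.
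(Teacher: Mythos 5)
Your proof is correct. The survey states this lemma without proof (it is quoted from \cite{FHS}), and your argument is the natural one: reduce termwise, using $|R_k A^s\xi|=k^s|\xi_k|$, to the combinatorial bound $\prod_{\beta_k>0}k^{\beta_k}\ge n/d$ for $|\bar\beta|=d$, $\|\bar\beta\|=n$, then invoke the already-recorded comparison $\dinorm{\cdot}{d,n}\le |P_n\cdot|^d$. Your sharpened bound $\prod_i k_i\ge n-d+1\ge n/d$ is fine; note it also follows in one line from $\prod_{\beta_k>0}k^{\beta_k}\ge\max\{k:\beta_k>0\}\ge \|\bar\beta\|/|\bar\beta|=n/d$, since every factor is at least $1$.
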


The main advantage of the gauges $\dinorm{\cdot}{d,n}$ is that they efficiently track the norm contribution of each variable in estimates of homogeneous polynomials. It leads to simple and convenient bounds,  see \eqref{gaugest} below, for recursively defined, complicated $\mP_j^{[d]}(\xi)$ and $\mathcal B_j^{[d]}(\xi)$.

\medskip
\noindent\textbf{Convergence of homogeneous polynomials.}
For $d\ge 1$, let
\beq\label{Pddef} \mP^{[d]}(\xi) =\sum_{\highlight{j=d}}^{\highlight{\infty}} \mP_j^{[d]}(\xi)
= \sum_{j=d}^\infty q_{j}^{[d]}(0,\xi),
\eeq
and $d\ge 2$, let
\beq\label{Qbar}
\mB^{[d]}(\xi)=\sum_{\highlight{j=1}}^{\highlight{\infty}}\ \mB^{[d]}_j(\xi)= \sum_{j=1}^\infty\ \sum_{k+l=j}\ \sum_{m+n=d}\ R_j B\big( \mP_k^{[m]}(\xi),\mP_l^{[n]}(\xi)\big).
\eeq

\begin{theorem}[Theorems 3.4 and 3.5, Lemma 4.1  \cite{FHS}]\label{simple}
Let $\alpha\ge 1/2$.  Then $\mP^{[d]}(\xi)$, defined in \eqref{Pddef}  for $d\ge 1$, and $\mB^{[d]}(\xi)$, defined in \eqref{Qbar} for $d\ge 2$, are  continuous homogeneous polynomials  from $\mD(A^{\alpha+3d/2})$ to $\mD(A^\alpha)$, and satisfy  
\beq  \label{APd} |A^\alpha \mP^{[d]}(\xi)|\le  \sum_{j=d}^\infty  |A^\alpha \mP_{j}^{[d]}(\xi)| \le M(\alpha,d) |A^{\alpha+3d/2}\xi|^d,\eeq  
\beq\label{AQbar-ineq} |A^\alpha \mB^{[d]}(\xi)|\le \sum_{n=1}^\infty |A^\alpha \mB_n^{[d]}(\xi)| \le C(\alpha,d)|A^{\alpha+3d/2}\xi|^d,\eeq 
for some positive constants $M(\alpha,d)$ and $C(\alpha,d)$.

Consequently, the series $\mP^{[d]}(\xi)=\sum_{j=d}^\infty \mP^{[d]}_j(\xi)$ and $\mB^{[d]}(\xi)=\sum_{j=d}^\infty \mB^{[d]}_j(\xi)$,  converge in $E^\infty$, and are continuous homogeneous polynomials of degree $d$ from  $E^\infty$ to $E^\infty$.   
\end{theorem}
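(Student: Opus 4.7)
My strategy is to prove the bounds \eqref{APd}--\eqref{AQbar-ineq} by induction on the degree $d$, expressing every intermediate estimate in terms of the homogeneous gauges $\dinorm{\cdot}{d,j}$ rather than the plain Sobolev norms $|A^\alpha\cdot|$. The reason for working with the gauges is twofold: first, they interact cleanly with the bilinear structure through Lemma \ref{dinorm-lem1}, so a gauge bound on $\mathcal P_j^{[d]}$ is essentially assembled by multiplying gauge bounds on the lower-order $\mathcal P_k^{[m]}$'s with $m<d$, $k<j$; second, the Poincar\'e-type trade in Lemma \ref{dinorm-mainlem} allows one to recover summability in $j$ at the very last step by exchanging a fixed number of gauge derivatives for Sobolev derivatives. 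The base case $d=1$ is trivial, since $\mathcal P^{[1]}(\xi)=\xi$ gives $|A^\alpha \mathcal P_j^{[1]}(\xi)|=|A^\alpha \xi_j|=\dinorm{A^\alpha\xi}{1,j}$.

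\textbf{Inductive step.} Extracting the homogeneous part of degree $d$ from the explicit formula for $q_j(0,\xi)$ displayed in subsection \ref{further} writes $\mathcal P_j^{[d]}(\xi)$ as a finite sum of terms $(A-j)^{-p-1}(I-R_j)B(\mathcal P_k^{[m]}(\xi),\mathcal P_l^{[n]}(\xi))$ with $k+l=j$, $m+n=d$, $m,n<d$, together with a polynomial-in-$t$ boundary contribution at $t=0$ that is estimated the same way. The index $p$ is bounded by $d-1$ because $q_k^{[m]}(t)$ is polynomial in $t$ of degree $\le m-1$. Since the spectrum is integer-valued, $\|(A-j)^{-p-1}(I-R_j)\|_{H\to H}\le 1$, so the resolvent factor costs no derivatives. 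Combining this with the standard bilinear bound for $B$ (each application of $B$ losing a fixed amount of regularity independent of $d$) and with the multiplicative inequality $\dinorm{\xi}{m,k}\dinorm{\xi}{n,l}\le e^{m+n}\dinorm{\xi}{d,j}$ from Lemma \ref{dinorm-lem1}, I propagate an inductive bound
\[
|A^\alpha \mathcal P_j^{[d]}(\xi)|\le C(\alpha,d)\,\dinorm{A^{\alpha+\gamma_d}\xi}{d,j},
\]
with $\gamma_d$ growing linearly in $d$ and satisfying $\gamma_d\le 3(d-1)/2$.

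\textbf{Summation in $j$ and the bound on $\mathcal B^{[d]}$.} To deduce \eqref{APd}, I sum the preceding pointwise-in-$j$ estimate over $j\ge d$, invoking Lemma \ref{dinorm-mainlem} with a fixed $s\ge 2$ to convert the $j$-sum into $\sum_{j\ge d}(d/j)^{s}|A^{\alpha+\gamma_d+s}\xi|^{d}$, which is absolutely convergent and bounded by a constant $M(\alpha,d)$ times $|A^{\alpha+3d/2}\xi|^{d}$ after absorbing $s$ into the final regularity exponent. The estimate \eqref{AQbar-ineq} then follows by inserting \eqref{APd} into each summand $R_jB(\mathcal P_k^{[m]},\mathcal P_l^{[n]})$ of $\mathcal B_j^{[d]}$, applying once more the bilinear estimate for $B$ and the gauge product inequality of Lemma \ref{dinorm-lem1}, and summing in $j$ exactly as before. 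Because the right-hand sides are finite for every $\alpha\ge 1/2$ (and hence for every $\alpha\ge 0$ after a trivial monotonicity), the two series converge in $D(A^\alpha)$ for every $\alpha$, i.e.\ in $E^\infty$, and the resulting limits are continuous homogeneous polynomials of degree $d$ because a series of continuous homogeneous polynomials whose $|A^\alpha\cdot|$ seminorms sum to a homogeneous bound of degree $d$ is itself a continuous homogeneous polynomial of degree $d$ in $E^\infty$.

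\textbf{Main obstacle.} The delicate point is keeping the regularity loss accumulated through the induction \emph{linear} in $d$, so that the final exponent is $\alpha+3d/2$ rather than $\alpha+O(d^2)$. This forces three separate accounting requirements: the bilinear step must be charged a constant number of derivatives per application, independent of $d$; the gauge product step must be invoked in the precise form of Lemma \ref{dinorm-lem1}, whose constant $e^{m+n}$ is independent of $j,k,l$; and the extra $s$ derivatives needed for the final summation in $j$ must be added once, at the end, rather than at every inductive step. Any wasteful use of these ingredients causes the regularity exponent to grow super-linearly in $d$, in which case the series \eqref{Pddef}--\eqref{Qbar} would fail to define an element of $E^\infty$ on any nontrivial open set. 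Once the linear-in-$d$ bookkeeping is enforced, everything else, including continuity and convergence in $E^\infty$, follows essentially formally.
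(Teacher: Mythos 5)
Your proposal follows essentially the same route as the paper's proof: an inductive gauge bound of the form $|A^\alpha \mP_j^{[d]}(\xi)|\le c(\alpha,d)\,\dinorm{A^{\alpha+\frac32(d-1)}\xi}{d,j}$, followed by an application of Lemma \ref{dinorm-mainlem} to trade gauge regularity for summability in $j$, which is exactly the two-step structure in \cite{FHS}. One quantitative correction: in the final summation you must take $s=3/2$ (for which $\sum_{j\ge d}(d/j)^{3/2}$ still converges), not ``a fixed $s\ge 2$''; since your inductive loss is $\gamma_d=3(d-1)/2$ in the worst case, choosing $s\ge 2$ gives a total exponent $\gamma_d+s\ge 3d/2+1/2$, so you would only establish the bound on the smaller domain $\mD(A^{\alpha+3d/2+1/2})$ rather than the stated $\mD(A^{\alpha+3d/2})$ --- with $s=3/2$ the exponents add up to exactly $\alpha+3d/2$ as in \eqref{APd}. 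This does not affect the qualitative conclusion about convergence and continuity in $E^\infty$, but it is needed for the precise statement of the theorem.
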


\begin{proof} 
By induction, ones first prove that
\beq\label{gaugest}   |A^\alpha \mP_j^{[d]}(\xi)|
\le c(\alpha,d) \dinorm{ A^{\alpha +\frac32(d-1)}\xi }{d,j}.\eeq 

Then using inequality \eqref{imbed-ineq},
\begin{align*}
&\sum_{j=1}^\infty  |A^\alpha \mP_{j}^{[d]}(\xi)|
\le \sum_{j=d}^\infty c(\alpha,d) \dinorm{A^{\alpha+\highlight{(3/2)(d-1)}}\xi}{d,j}\\
&\le \sum_{j=d}^\infty c(\alpha,d) \highlight{\Big(\frac d j\Big)^{3/2}} |A^{\alpha+(3/2)(d-1)+\highlight{3/2}}\xi|^d
= M(\alpha,d)  |A^{\alpha+3d/2} \xi|^d,
\end{align*}
which proves \eqref{APd}.
Inequality \eqref{AQbar-ineq} for $\mB^{[d]}(\xi)$ is proved similarly.
\end{proof}

\begin{proof}[Proof of Theorem \ref{FHSthm}]
 By virtue of Theorem \ref{simple}, the ODE \eqref{NFPD} and change of variable \eqref{uxi} are meaningful in $E^\infty$ now.
It still remains to prove that \eqref{NFPD}, indeed, comes from NSE and \eqref{uxi}. 
 One can derive from NSE \eqref{NS} the formal ODE for $\xi(t)$ under the transformation \eqref{uxi} as
\beq\label{NFxi} \frac{d \xi}{dt}+\sum_{d=1}^\infty Q^{[d]}(\xi)=0,\eeq
where $Q^{[1]}(\xi)=A\xi$ and, for $d\ge 2$,
\beqs
 Q^{[d]}(\xi)=\sum_{k+l=d} B( \mP^{[k]}(\xi),\mP^{[l]}(\xi))
 - \sum_{\substack{2\le k,l\le d-1\\k+l=d+1}} D\mP^{[k]}(\xi)(Q^{[l]}(\xi))
+H_A^{(d)} \mP^{[d]}(\xi),
\eeqs
with $$H_A^{(d)} \mP^{[d]}(\xi)=A\mP^{[d]}(\xi)-D\mP^{[d]}(\xi)A\xi$$ being the  Poincar\'e homology operator. 

It turns out that, see \cite[Proposition 4.7]{FHS}, 
$$Q^{[d]}(\xi)=\mB^{[d]}(\xi)\text{ for all }\xi\in E^\infty,\ d\ge 2.$$

Therefore, the transformed system \eqref{NFxi} is the same as \eqref{NFPD} whose resonance conditions are already met.
This implies that \eqref{NFPD} is a Poincar\'e-Dulac normal form of the NSE \eqref{NS} by the change of variable \eqref{uxi}.
\end{proof}

  
 \section{Final comments}
  \label{secFinal}
   
 \subsection{Other related results} 
\begin{enumerate}
\item The recent paper \cite{HM2}  obtains the asymptotic expansion of the same type for weak solutions of NSE in periodic domains with exponentially decaying (non potential) outer body forces satisfying an asymptotic expansion of the type
$$f(t)\sim \sum_{n=1}^{\infty} f_n(t)e^{-nt}$$ 
in appropriate (Gevrey type) functional spaces.
\item  When $\Omega=\R^n$, because of lack of the Poincar\' e inequality, the situation is drastically different and the decay rate is only algebraic. The techniques and their proofs are quite different than those used for the bounded domains. We refer to \cite{Schon, Mi-Scho, GW, Br1, Br2, Wie} and the references therein. 
In particular, Kukavica and Reis (\cite{KR}) obtain a precise space-time asymptotic of smooth solutions in a weighted space.

\item Section 7 of \cite{FS3} focuses on the viscous Burgers equation and the Minea system. In the case of the viscous Burgers equation, the normalizing mapping $W$ can be explicitly computed in terms of the Cole-Hopf transform
\item The asymptotic expansion is also established for dissipative wave equations by Shi in \cite{Shi}.
%
\end{enumerate}
  \subsection {Some open issues}
  
  We indicate below a few open questions related to the topics considered in the present paper. 
 \begin{enumerate}
 \item     The classical   Poincar\' e-Dulac theory for ODEs has a second part concerning the convergence of the formal series which give the change of variables, and the convergence of the series in the normal form (see \cite{Ar}). The extension of these convergence results to the NSE seems to be  an open problem.
 \item Complete our knowledge of the normalization map and of the normal form.
 \item What happens in $3D$ when the initial data  $u_0\in \mathcal R$ is near the boundary $\partial \mathcal R$ in case $ \mathcal R \neq V ?$
 \item It is very likely that the normal form theory studied here extends to the NSE posed on a compact Riemann manifold ({\it e.g.}~the Euclidean sphere in $\R^3$) (see \cite{Ghi} for results on the asymptotic decay). In particular, all specific results obtained in the periodic case should have a counterpart in this framework.
 \end{enumerate}

\section*{Acknowledgements}
JCS acknowledges  support by  the French Science Foundation ANR, under grant GEODISP.

LH  would like to thank the Departments of Mathematics at University of Tennessee (Knoxville), Indiana University (Bloomington), and Texas A\&M University (College Station) for their hospitality during his visits in the Fall of 2017, when he partly worked on the manuscript of this paper.



\bibliographystyle{amsplain}

\end{document}